\newcommand*{\eprint}[2][arXiv]{%
 \ifstrequal{#1}{arXiv}%
  {\href{http://arxiv.org/abs/#2}{arXiv:#2}}%
  {\mbox{#1:#2}}}
\newcommand*{\arXiv}[1]{\eprint[arXiv]{#1}}
\newcommand*{\MR}[1]{%
 \href{http://www.ams.org/mathscinet-getitem?mr=#1}%
      {MR~#1}}
\newcommand*{\ZBL}[2][Zbl]{%
 \href{http://zbmath.org/?q=an:#2}{#1~#2}}
\newcommand*{\Zbl}[1]{\ZBL[Zbl]{#1}}
\newcommand{\Isom}{\mathit{Isom}}
\newcommand{\mcal}[1]{\mathcal{#1}}
\newcommand{\mbb}[1]{\mathbb{#1}}
\newcommand{\mfrak}[1]{\mathfrak{#1}}
\newcommand*{\+}{\relax\ifmmode\mkern4.6mu\else\kern.54ex\fi}
\def\oneappendix{\appendix\def\@appendixtitlemaybe{\appendixname\ }}
\newtheorem{teo}{Theorem}[section]
\newtheorem{prop}[teo]{Proposition}
\newtheorem{lema}[teo]{Lemma}
\newtheorem{coro}[teo]{Corollary}
\theoremstyle{remark}\newtheorem{obs}[teo]{Remark}
\theoremstyle{definition}
\begin{document}

\title{\textbf{Properties of sets of isometries of Gromov hyperbolic spaces}}

\author{\small{Eduardo Oreg\'on-Reyes}}

\author{
\small{EDUARDO OREG\'ON-REYES}\\
}

\date{}
\maketitle

\begin{abstract}
We prove an inequality concerning isometries of a Gromov hyperbolic metric space, which does not require the space to be proper or geodesic. It involves the joint stable length, a hyperbolic version of the joint spectral radius, and shows that sets of isometries behave like sets of $2 \times 2$ real matrices. Among the consequences of the inequality, we obtain the continuity of the joint stable length and an analogue of Berger-Wang theorem.
\end{abstract}

\small{
\paragraph{Mathematics Subject Classification (2010).} 53C23, 20F65, 15A42.
\vspace{-2ex}
\paragraph{Keywords.} Gromov hyperbolic space, stable length, joint spectral radius.
}

\normalsize

\section{Introduction}
Let $X$ be a metric space with distance $d(x,y)=|x-y|$. We assume this space is $\delta$-\emph{hyperbolic} in the Gromov sense. This concept was introduced in 1987 \cite{grohi} and has an important role in geometric group theory and negatively curved geometry \cite{bri,grohi,harpe}. There are several equivalent definitions \cite{papa}, among which the following \emph{four point condition} (f.p.c.): For all $x,y,s,t \in X$ the following holds:
\begin{equation}\label{fpc}
|x-y|+|s-t| \leq \max(|x-s|+|y-t|,|x-t|+|y-s|)+2\delta.\tag{f.p.c.}
\end{equation}
This paper deals with isometries of hyperbolic spaces. We do not assume $X$ to be geodesic nor proper, since these conditions are irrelevant for many purposes \cite{bosc, geodyn,ham,vai}. We also do not make use of the Gromov boundary, deriving our fundamental results directly from \eqref{fpc}.

Let us introduce some terminology and notation. Let $\Isom(X)$ be the group of isometries of $X$. For $x \in X$ and $\Sigma \subset \Isom(X)$ define
\begin{equation}
|\Sigma|_{x}=\sup_{f\in \Sigma}{|fx-x|}. \notag
\end{equation}
We say that $\Sigma$ is \emph{bounded} if $|\Sigma|_{x}< \infty$ for some (and hence any) $x\in X$.

For a single isometry $f$ the \emph{stable length} is defined by
\begin{equation}
d^{\infty}(f)=\lim_{n \to \infty}{\frac{|f^{n}x-x|}{n}}=\inf_{n}{\frac{|f^{n}x-x|}{n}}. \notag
\end{equation}
This quantity is well defined and finite by subadditivity  and turns to be independent of $x \in X$.

Our first result gives a \emph{lower} bound for the stable length:

\begin{teo}\label{teoremauno}
If $x\in X$ and $f\in \Isom(X)$ then:
\begin{equation}
|f^{2}x-x| \leq |fx-x|+d^{\infty}(f)+2\delta.  \label{teouno}
\end{equation}
\end{teo}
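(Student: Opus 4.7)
Write $a_n := |f^n x - x|$ and $d := d^{\infty}(f)$, so $(a_n)_{n \geq 0}$ is a non-negative subadditive sequence with $a_0 = 0$ and $a_n/n \downarrow d$. The plan is to apply \eqref{fpc} once to a cleverly chosen quadruple involving $f^{-1} x$, producing a family of inequalities indexed by $n$, and then to pass to the limit $n \to \infty$ via a purely real-variable argument on subadditive sequences.

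For the f.p.c. step, I would apply \eqref{fpc} to the four points $\{x,\+ f^{n} x,\+ f^{-1} x,\+ f x\}$ for $n \geq 2$. The motivation: the pair $\{f^{-1}x, fx\}$ has distance $|f^{2}x - x| = a_2$, so $a_2$ appears on the left-hand side even though $f^2 x$ is not one of the four points, while the pair $\{x, f^n x\}$ contributes the large-$n$ term $a_n$ from which $d$ will eventually emerge. Reading off the six pairwise distances (each of the form $a_k$ after applying a suitable power of $f$ as isometry) and simplifying, \eqref{fpc} collapses to
\begin{equation*}
a_2 - a_1 \;\leq\; \max(a_{n-1},\+ a_{n+1}) - a_n + 2\delta \qquad (n \geq 1).
\end{equation*}

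The remaining ingredient, which I expect to be the main obstacle, is the purely real-variable claim $\inf_{n \geq 1}\bigl[\max(a_{n-1}, a_{n+1}) - a_n\bigr] \leq d$. The naive Ces\`aro bound $\liminf (a_{n+1} - a_n) \leq d$ is not quite enough because the $\max$ also contains $a_{n-1} - a_n$. I would argue by contradiction using the differences $b_k := a_k - a_{k-1}$. If for some $\epsilon > 0$ and all $n \geq N_0$ one had $\max(a_{n-1},\+a_{n+1}) - a_n > d + \epsilon$, then each such $n$ would satisfy $b_{n+1} > d+\epsilon$ or $b_n < -d-\epsilon$; contrapositively, $b_n \geq -d-\epsilon$ would force $b_{n+1} > d+\epsilon$. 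Since $d \geq 0$, we have $d + \epsilon > -d - \epsilon$, so the implication propagates: as soon as some $b_{n_0} \geq -d-\epsilon$ occurs, inductively $b_m > d+\epsilon$ for every $m > n_0$, forcing $\lim a_m/m \geq d + \epsilon > d$, a contradiction. The only alternative is $b_n < -d-\epsilon$ for all $n \geq N_0$, which drives $a_n \to -\infty$ and contradicts $a_n \geq 0$.

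Combining the two steps, for each $\epsilon > 0$ I can choose $n$ with $\max(a_{n-1}, a_{n+1}) - a_n \leq d + \epsilon$, obtaining $a_2 \leq a_1 + d + \epsilon + 2\delta$; letting $\epsilon \to 0$ delivers \eqref{teouno}.
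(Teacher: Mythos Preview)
Your proof is correct and follows essentially the same route as the paper. Your quadruple $\{x,\+f^{n}x,\+f^{-1}x,\+fx\}$ is the image under $f^{-1}$ of the paper's quadruple $\{x,\+f^{2}x,\+fx,\+f^{n+1}x\}$, so after an index shift you obtain the identical inequality $a_2 - a_1 - 2\delta \le \max(a_{n-1},a_{n+1}) - a_n$; your contradiction/propagation argument on the differences $b_k$ is the same induction the paper runs directly (showing $a + a_n \le a_{n+1}$ with $a = a_2 - a_1 - 2\delta$), just phrased with an auxiliary $\epsilon$.
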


The main result of this paper is a version of Theorem \ref{teoremauno} for two isometries:

\begin{teo}\label{teoremados}
For every $x\in X$ and every $f,g \in \Isom(X)$ we have:
\begin{equation}\label{mainres}
\small{|fgx-x|\leq \max \left(|fx-x|+d^{\infty}(g),|gx-x|+d^{\infty}(f), \frac{|fx-x|+|gx-x|+d^{\infty}(fg)}{2} \right)+6\delta.}
\end{equation}
\end{teo}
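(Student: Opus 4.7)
The plan is to iterate the four-point condition \eqref{fpc} in the presence of the three isometries $f$, $g$, and $fg$, invoking Theorem \ref{teoremauno} applied to each of them in order to convert single-step displacements into stable lengths. The presence of the factor $1/2$ together with $d^{\infty}(fg)$ in the third branch of the maximum strongly suggests that one should introduce iterates $(fg)^{n}x$ and eventually let $n\to \infty$, so that distances of the form $|(fg)^{n}x-x|$ encode the asymptotic translation rate $n\,d^{\infty}(fg)$.

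First I would apply \eqref{fpc} to quadruples of the form $\{x,\, fgx,\, p,\, q\}$ where $p,q$ are chosen so that the cross-distances simplify by isometry invariance. A good candidate is $p=fx$, $q=fg^{-1}x$: here $|p-q|=|x-g^{-1}x|=|gx-x|$ and $|fgx-fg^{-1}x|=|gx-g^{-1}x|$, the latter being $|g^{2}y-y|$ at $y=g^{-1}x$, which by Theorem \ref{teoremauno} is at most $|gx-x|+d^{\infty}(g)+2\delta$. This produces an inequality of the shape
\[
|fgx-x| \leq \max\bigl(\,|fx-x|+d^{\infty}(g),\; R\,\bigr) + 4\delta,
\]
where $R$ is an auxiliary distance still to be controlled. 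The symmetric choice (swapping the roles of $f$ and $g$) gives an analogous inequality producing the term $|gx-x|+d^{\infty}(f)$ in the maximum.

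The bulk of the work is then to handle the remaining case, in which neither of the first two terms in the max is attained. Here I would apply \eqref{fpc} to a quadruple involving a high iterate, such as $\{x,\,fgx,\,(fg)^{n}x,\,(fg)^{n-1}fx\}$: the distances $|(fg)^{n}x-(fg)^{n-1}fx|=|gx-x|$ and $|fx-(fg)^{n-1}fx|=|(gf)^{n-1}x-x|$ simplify by peeling off a common isometry factor, while the remaining two cross-distances can be bounded by $|fx-x|+|(fg)^{n-1}x-x|$ and $|gx-x|+|(fg)^{n-1}x-x|$ via the triangle inequality. Dividing the resulting inequality by $2$ and using $|(fg)^{n}x-x|/n \to d^{\infty}(fg)$ yields, after sending $n\to\infty$, the averaged bound $\tfrac{1}{2}(|fx-x|+|gx-x|+d^{\infty}(fg))$, with an additional application of Theorem \ref{teoremauno} to $fg$ contributing the last $2\delta$.

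The main obstacle is dual: on the combinatorial side, identifying quadruples whose f.p.c.\ outputs line up so that the three branches of the final maximum arise without leftover error terms; and on the bookkeeping side, arranging the applications so that the total $\delta$-overhead is exactly $6\delta$. Each use of \eqref{fpc} contributes $2\delta$ and each invocation of Theorem \ref{teoremauno} another $2\delta$, so three rounds in total must suffice in each case. The dichotomy between the two initial f.p.c.\ applications and the iterate-based one must be orchestrated carefully so that the three bounds can be combined into a single maximum rather than produced through separate arguments.
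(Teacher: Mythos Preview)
Your proposal has the right general flavor --- repeated use of the four-point condition together with Theorem~\ref{teoremauno} --- but there is a genuine gap in the ``bulk'' step, and the overall dichotomy that organizes the argument is missing.

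First, the approach via high iterates does not yield information as written. If you apply \eqref{fpc} to $\{x,\,fgx,\,(fg)^{n}x,\,(fg)^{n-1}fx\}$, the left-hand side is $|fgx-x|+|gx-x|$, a bounded quantity, while \emph{both} branches of the right-hand maximum grow like $\sim 2n\,d^{\infty}(fg)$ as $n\to\infty$ (each involves a distance of order $n\,d^{\infty}(fg)$ plus another of order $(n-1)\,d^{\infty}(fg)$). So for large $n$ the inequality is trivially satisfied and nothing can be extracted by dividing or passing to the limit. A telescoping scheme in the spirit of the proof of Theorem~\ref{teoremauno} would require an inequality of the form $a + b_{n} \leq \max(b_{n-1},b_{n+1}) + c$ with $b_{n}=|(fg)^{n}x-x|$, and your quadruple does not produce one: the cross-terms bring in $(gf)^{k}x$ as well as $(fg)^{k}x$, and these do not cancel.

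Second, the ``auxiliary distance $R$'' from your first application is never pinned down, and this is where the actual structure of the proof lives. The paper does not proceed via $fg^{-1}x$; instead it applies \eqref{fpc} to $x,\,fgx,\,fx,\,f^{2}x$ and then splits on whether $|fx-gx|$ is bounded by $\max(|fx-x|+d^{\infty}(g),\,|gx-x|+d^{\infty}(f))+4\delta$. In the ``small $|fx-gx|$'' case the first two branches drop out directly. In the ``large $|fx-gx|$'' case one first shows $\bigl||fgx-x|-|fx-gx|\bigr|\leq 2\delta$ and $\bigl||gfx-x|-|fx-gx|\bigr|\leq 2\delta$, then applies \eqref{fpc} to $x,\,fgx,\,fx,\,fgfx$ to obtain $2|fgx-x|\leq |fgfx-x|+|gx-x|+6\delta$, and finally applies \eqref{fpc} to $x,\,fgfx,\,fgx,\,(fg)^{2}x$ together with Theorem~\ref{teoremauno} for $fg$ to bound $|fgfx-x|\leq |fx-x|+d^{\infty}(fg)+4\delta$. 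The key missing idea in your plan is this intermediate point $fgfx$: it is what allows the factor $2$ to appear on the left side (hence the $1/2$ in the final bound) without ever invoking high iterates.
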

For the generalization of the stable length and Theorem \ref{teoremauno} to
bounded sets of isometries, some notation is required. If $\Sigma \subset \Isom(X)$ we denote by $\Sigma^{n}$ the set of all compositions of $n$ isometries of $\Sigma$. Note that if $\Sigma$ is bounded then each $\Sigma^{n}$ is bounded.
We define the \emph{joint stable length} as the quantity
\begin{equation}
\mfrak{D}(\Sigma)=\lim_{n \to \infty}{\frac{|\Sigma^{n}|_x}{n}}=\inf_{n}{\frac{|\Sigma^{n}|_x}{n}}. \notag
\end{equation}
Similarly as before, this function is well defined, finite and independent of $x$.
Also, it is useful to define the \emph{stable length} of $\Sigma$ given by
\begin{equation}
d^{\infty}(\Sigma) =\sup_{f\in \Sigma}{d^{\infty}(f)}.\notag
\end{equation}

Taking supremum over $f,g \in \Sigma$ in both sides of \eqref{mainres} and noting that $d^{\infty}(\Sigma^2)\leq \mfrak{D}(\Sigma^2)=2\mfrak{D}(\Sigma)$ we obtain a lower bound for the joint stable length similar to Theorem \ref{teoremauno}:
\begin{coro}\label{corolariotres}
For every $x \in X$ and every bounded set $\Sigma \subset \Isom(X)$ the following holds:
\begin{equation}
|\Sigma^{2}|_{x} \leq |\Sigma|_{x}+\frac{d^{\infty}(\Sigma^2)}{2}+6\delta\leq |\Sigma|_{x}+\mfrak{D}(\Sigma)+6\delta. \label{cor3}
\end{equation}

\end{coro}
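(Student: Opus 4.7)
The plan is to derive the corollary directly from Theorem \ref{teoremados} by taking suprema, as the hint in the paper suggests. Fix $x \in X$ and arbitrary $f,g \in \Sigma$. I want to show that each of the three entries in the maximum in \eqref{mainres} is dominated by $|\Sigma|_x + d^\infty(\Sigma^2)/2$, so that after adding $6\delta$ and taking $\sup_{f,g}$ one obtains the first inequality in \eqref{cor3}.

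For the first two entries, I use the trivial bounds $|fx-x|,\, |gx-x| \leq |\Sigma|_x$ and $d^\infty(f),\, d^\infty(g) \leq d^\infty(\Sigma)$. Here I need the auxiliary observation that $d^\infty(\Sigma) \leq d^\infty(\Sigma^2)/2$: indeed, for any $f \in \Sigma$, the isometry $f^2$ lies in $\Sigma^2$, and $d^\infty(f^2)=2d^\infty(f)$, so $d^\infty(f) \leq d^\infty(\Sigma^2)/2$. For the third entry, $fg \in \Sigma^2$ gives $d^\infty(fg)\leq d^\infty(\Sigma^2)$, while the arithmetic mean $(|fx-x|+|gx-x|)/2$ is at most $|\Sigma|_x$; together this bounds the third term by $|\Sigma|_x + d^\infty(\Sigma^2)/2$. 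Combining with the other two cases and taking the supremum over $f,g \in \Sigma$ yields
\begin{equation*}
|\Sigma^2|_x \leq |\Sigma|_x + \frac{d^\infty(\Sigma^2)}{2} + 6\delta.
\end{equation*}

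For the second inequality in \eqref{cor3}, the point is that $d^\infty(\Sigma^2) \leq \mfrak{D}(\Sigma^2) = 2\mfrak{D}(\Sigma)$. The first estimate holds because $d^\infty(h) = \lim_n |h^n x - x|/n \leq \lim_n |\Sigma^{2n}|_x/n = \mfrak{D}(\Sigma^2)$ for every $h \in \Sigma^2$, using that $h^n \in (\Sigma^2)^n = \Sigma^{2n}$. The equality $\mfrak{D}(\Sigma^2) = 2\mfrak{D}(\Sigma)$ follows from $(\Sigma^2)^n = \Sigma^{2n}$: indeed,
\begin{equation*}
\mfrak{D}(\Sigma^2) = \lim_{n\to\infty}\frac{|\Sigma^{2n}|_x}{n} = 2\lim_{n\to\infty}\frac{|\Sigma^{2n}|_x}{2n} = 2\mfrak{D}(\Sigma).
\end{equation*}
Substituting back gives $d^\infty(\Sigma^2)/2 \leq \mfrak{D}(\Sigma)$ and completes the chain of inequalities.

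There is no serious obstacle here: the corollary is a bookkeeping consequence of the hard work already done in Theorem \ref{teoremados}. The only point requiring a moment of care is the handling of the third term in the max, where one must resist the temptation to use $d^\infty(\Sigma)$ and instead keep $d^\infty(\Sigma^2)/2$; this is precisely what makes the factor $1/2$ appear and allows the bound to be tight at the level of joint stable length.
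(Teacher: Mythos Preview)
Your proof is correct and follows essentially the same approach as the paper, which simply says to take the supremum over $f,g\in\Sigma$ in \eqref{mainres} and use $d^{\infty}(\Sigma^2)\leq \mfrak{D}(\Sigma^2)=2\mfrak{D}(\Sigma)$. You have filled in the details carefully, in particular the auxiliary observation $d^{\infty}(\Sigma)\leq d^{\infty}(\Sigma^2)/2$ needed to handle the first two entries of the maximum.
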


Inequalities $\eqref{teouno}$ and $\eqref{cor3}$ are inspired by lower bounds for the spectral radius due to J.\+Bochi \cite[Eq. 1 \& Thm. A]{boci}. As we will see, the connection between the spectral radius and the stable length will allow us to deduce Bochi's inequalities from \eqref{teouno} and \eqref{cor3} (see Section \ref{secciondos} below), and actually improve them using \eqref{mainres}.

We present some applications of Theorems \ref{teoremauno} and \ref{teoremados}:
\paragraph{Berger-Wang like theorem.}
The joint stable length is inspired by matrix theory. Let $M_{d}(\mbb{R})$ be the set of real $d\times d $ matrices and let $\|.\|$ be an operator norm on $M_{d}(\mbb{R})$. We denote the spectral radius of a matrix $A$ by $\rho(A)$. The \emph{joint spectral radius} of a bounded set $\mcal{M} \subset M_{d}(\mbb{R})$ is defined by
\begin{equation}
\mfrak{R}(\mcal{M})=\displaystyle\lim_{n \to \infty}{\sup{\left\{{\|A_{1} \dots A_{n}\|^{1/n} \colon A_{i} \in \mcal{M}}\right\}}}.  \notag
\end{equation}
Note the similarity with the definition of the joint stable length.

The joint spectral radius was introduced by Rota and Strang \cite{rost} and popularized by Daubechies and Lagarias \cite{dau}. This quantity has aroused research interest in recent decades and it has appeared in several mathematical contexts (see e.g. \cite{jung,koyak}). An important result related to the joint spectral radius is the Berger-Wang theorem \cite{bewa} which says that for all bounded sets $\mcal{M} \subset M_{d}(\mbb{R})$ we have $ \mfrak{R}(\mcal{M})= \limsup_{n \to \infty}{\sup{\left\{{\rho(A)^{1/n}: A \in \mcal{M}^n}\right\}}}.$ From Corollary \ref{corolariotres} we prove a similar result for the joint stable length in a $\delta$-hyperbolic space\footnote{Very recently, Breuillard and Fujiwara \cite{breu} gave a different proof of this result assuming that $X$ is $\delta$-hyperbolic and geodesic. They also proved the first formula in Theorem \ref{bergerwang} when $X$ is a symmetric space of non-compact type.}:
\begin{teo}\label{bergerwang} Every bounded set  $\Sigma \subset \Isom(X)$ satisfies
\begin{equation}
\mfrak{D}(\Sigma)= \limsup_{n \to \infty}{\frac{d^{\infty}(\Sigma^{n})}{n}}=\lim_{n \to \infty}{\frac{d^{\infty}(\Sigma^{2n})}{2n}}.\notag
\end{equation}
\end{teo}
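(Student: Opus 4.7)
The plan is to combine two ingredients: an easy upper bound on $d^{\infty}(\Sigma^n)$ coming from the definition of the stable length, and a reverse lower bound obtained by applying Corollary~\ref{corolariotres} not to $\Sigma$ itself but to the (still bounded) set $\Sigma^n$.

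First I would record the trivial direction. For any $f\in\Isom(X)$ and $x\in X$,
$$ d^{\infty}(f) \;=\; \inf_{n\geq 1}\frac{|f^{n}x-x|}{n} \;\leq\; |fx-x|. $$
Taking the supremum over $f\in\Sigma^n$ yields $d^{\infty}(\Sigma^n)\leq|\Sigma^n|_x$, so dividing by $n$ and letting $n\to\infty$ gives
$$ \limsup_{n\to\infty}\frac{d^{\infty}(\Sigma^n)}{n} \;\leq\; \mfrak{D}(\Sigma), \qquad \limsup_{n\to\infty}\frac{d^{\infty}(\Sigma^{2n})}{2n} \;\leq\; \mfrak{D}(\Sigma). $$

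The substantive step is the reverse inequality. Since $\Sigma^n$ is bounded, Corollary~\ref{corolariotres} applied to $\Sigma^n$ in place of $\Sigma$ gives
$$ |\Sigma^{2n}|_{x} \;\leq\; |\Sigma^{n}|_{x} + \frac{d^{\infty}(\Sigma^{2n})}{2} + 6\delta, $$
using $(\Sigma^n)^2=\Sigma^{2n}$. Dividing by $2n$ and letting $n\to\infty$, the left side tends to $\mfrak{D}(\Sigma)$ and $|\Sigma^{n}|_{x}/(2n)$ tends to $\mfrak{D}(\Sigma)/2$, so rearranging yields
$$ \liminf_{n\to\infty}\frac{d^{\infty}(\Sigma^{2n})}{2n} \;\geq\; \mfrak{D}(\Sigma). $$
Combined with the trivial upper bound, this proves $\lim_{n}\frac{d^{\infty}(\Sigma^{2n})}{2n}=\mfrak{D}(\Sigma)$, which is the second equality.

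For the first equality one just observes the sandwich
$$ \mfrak{D}(\Sigma) \;=\; \lim_{n\to\infty}\frac{d^{\infty}(\Sigma^{2n})}{2n} \;\leq\; \limsup_{n\to\infty}\frac{d^{\infty}(\Sigma^{n})}{n} \;\leq\; \mfrak{D}(\Sigma). $$
There is no real obstacle; the only conceptual step is to recognize that Corollary~\ref{corolariotres}, though seemingly a crude single-step inequality, becomes asymptotically sharp once iterated by substitution $\Sigma\mapsto\Sigma^n$, because the additive constant $6\delta$ is absorbed after dividing by $n$.
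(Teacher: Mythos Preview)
Your proof is correct and follows essentially the same route as the paper: establish the trivial upper bound $d^{\infty}(\Sigma^{n})\leq|\Sigma^{n}|_{x}$, apply Corollary~\ref{corolariotres} with $\Sigma^{n}$ in place of $\Sigma$, divide by $n$ (or $2n$), and pass to the limit so that the additive $6\delta$ vanishes. The paper's proof is a terser version of exactly this argument.
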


A question that arose from the Berger-Wang  theorem is the \emph{finiteness conjecture} proposed by Lagarias and Wang \cite{lw} which asserts that for every finite set $\mcal{M}\subset M_{d}(\mbb{R})$ there exists some $n \geq 1$ and $A_1, \dots, A_n \in \mcal{M}$ such that $\mfrak{R}(\mcal{M})=\rho(A_1\cdots A_n)^{1/n}$. The failure of this conjecture was proved by Bousch and Mairesse \cite{bousch}.

In the context of sets of isometries, following an idea of I.\+D.\+Morris (personal communication) we refute the finiteness conjecture for $X=\mbb{H}^2$.
\begin{prop}\label{conjeturafinita}
There exists a finite set $\Sigma \subset \Isom(\mbb{H}^2)$ such that for all $n \geq 1$:
\begin{equation}
\mfrak{D}(\Sigma)>\frac{d^{\infty}(\Sigma^{n})}{n}. \notag
\end{equation}
\end{prop}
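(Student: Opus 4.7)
The plan is to transfer the problem to the matrix finiteness conjecture in $M_{2}(\mbb{R})$ via the classical identification $\Isom^{+}(\mbb{H}^{2})\cong PSL(2,\mbb{R})$, and then invoke the counterexample of Bousch and Mairesse \cite{bousch}.

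First I would establish a dictionary between isometries of $\mbb{H}^{2}$ and matrices in $SL(2,\mbb{R})$. Given $A\in SL(2,\mbb{R})$, let $\bar{A}$ denote the corresponding M\"obius transformation of the upper half-plane model, and fix the base point $x_{0}=i$. A direct computation in the Poincar\'e model gives
$$\cosh |\bar{A}x_{0}-x_{0}|=\tfrac{1}{2}\|A\|_{\mathrm{HS}}^{2},$$
hence $|\bar{A}x_{0}-x_{0}|=2\log\|A\|+O(1)$ uniformly for $A\in SL(2,\mbb{R})$ (with any fixed matrix norm). Applying this to the powers $A^{n}$ and dividing by $n$ recovers the classical identity $d^{\infty}(\bar{A})=2\log\rho(A)$.

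For a finite $\mcal{M}\subset SL(2,\mbb{R})$, set $\Sigma=\{\bar{A}:A\in\mcal{M}\}\subset\Isom(\mbb{H}^{2})$. The estimate above, applied along $\mcal{M}^{n}$, yields
$$\mfrak{D}(\Sigma)=2\log\mfrak{R}(\mcal{M}),\qquad \frac{d^{\infty}(\Sigma^{n})}{n}=\frac{2}{n}\sup_{A\in\mcal{M}^{n}}\log\rho(A).$$
Consequently, the desired strict inequality $\mfrak{D}(\Sigma)>d^{\infty}(\Sigma^{n})/n$ for every $n\geq 1$ is equivalent to the failure of the matrix finiteness conjecture for $\mcal{M}$, i.e.\ to $\mfrak{R}(\mcal{M})>\rho(A)^{1/n}$ for every $n$ and every $A\in\mcal{M}^{n}$.

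Finally I would take for $\mcal{M}$ an instance of the Bousch-Mairesse counterexample \cite{bousch}; then $\Sigma$ is the desired finite subset of $\Isom(\mbb{H}^{2})$. The main subtlety I anticipate is the normalization: the counterexamples of \cite{bousch} are typically phrased as one-parameter families $\{A,\alpha B\}$ in $GL(2,\mbb{R})$, and scaling individual matrices by different constants does not commute with the joint spectral radius. I expect this to be resolved either by an explicit $SL(2,\mbb{R})$-valued variant of the counterexample (several are now available in the literature) or by a continuity argument within the parameter family yielding such a variant.
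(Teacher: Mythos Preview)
Your overall strategy---transfer to matrices via the dictionary $d_{\mbb{H}^2}(\tilde{A}i,i)=2\log\|A\|_2$, $d^{\infty}(\tilde{A})=2\log\rho(A)$, $\mfrak{D}(\tilde{\mcal{M}})=2\log\mfrak{R}(\mcal{M})$, then invoke a matrix counterexample---is exactly the paper's route (Proposition~\ref{proposicioncuatro}). The paper even establishes the dictionary via the precise identity $d_{\mbb{H}^2}(\tilde{A}i,i)=2\log\|A\|_2$ rather than your asymptotic version, but for the application the difference is immaterial.

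The normalization subtlety you flag, however, is not a technicality you can wave away; it is the entire content of the argument. The Bousch--Mairesse pairs are of the form $\{A,\alpha B\}$ with $\alpha$ an irrational-type parameter, and rescaling the two matrices by \emph{different} scalars to force them into $\textup{SL}^{\pm}_2(\mbb{R})$ destroys the one-parameter structure on which the counterexample rests (as you yourself note, such rescaling does not commute with $\mfrak{R}$). So invoking \cite{bousch} directly does not give an $\textup{SL}^{\pm}_2(\mbb{R})$ counterexample, and your sentence ``I expect this to be resolved'' is where the actual work lies.

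The paper resolves this by producing, following I.~D.~Morris, an explicit one-parameter family $\mcal{A}^{(t)}=\{A_0,A_1^{(t)}\}$ already lying in $\textup{SL}^{\pm}_2(\mbb{R})$ (Theorem~\ref{counterexample}), and then running a Blondel--Theys--Vladimirov style connectedness argument: the Jenkinson--Pollicott theorem guarantees that for each $t$ either the finiteness property fails or the optimal word is \emph{unique} up to cyclic permutation; the sets $P(\overline{\sigma})$ of parameters realizing a given word are closed; a Sierpi\'nski-type theorem forbids a countable closed partition of $[1,\infty)$; and the endpoint symmetry ($A_1^{(1)}=A_0^{\top}$) together with large-$t$ behaviour rules out a single surviving word. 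This is precisely your ``continuity argument within the parameter family,'' but it needs the uniqueness input from Jenkinson--Pollicott, which you do not mention. Without that ingredient your proposal is a correct outline with its hardest step left unproved.
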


Let us interpret these facts in terms of Ergodic Theory. Given a compact set of matrices $\mcal{M}$, the joint spectral radius equals the supremum of the Lyapunov exponents over all ergodic shift-invariant measures on the space $\mcal{M}^{\mbb{N}}$ (see \cite{mather} for details). Therefore, Berger-Wang says that instead of considering all shift-invariant measures, it is sufficient to consider those supported on periodic orbits. A far-reaching extension of this result was obtained by Kalinin \cite{kalinin}.

\paragraph{Classification of semigroups of isometries.} The stable length gives relevant information about isometries in hyperbolic spaces. Recall that for a $\delta$-hyperbolic space $X$ an isometry $f \in \Isom(X)$ is either \emph{elliptic, parabolic} or \emph{hyperbolic}.
This classification is directly related to the stable length \cite[Chpt, 10, Prop. 6.3]{papa}:
\begin{prop}\label{hyppos}
An isometry $f$ of $X$ is hyperbolic if and only if $d^{\infty}(f)>0.$
\end{prop}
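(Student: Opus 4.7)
The plan is to establish the equivalence using the four point condition and Theorem \ref{teoremauno}, giving a boundary-free variant of the argument in \cite{papa}. Since the classification is a trichotomy, it suffices to prove the three implications (a) elliptic $\Rightarrow d^{\infty}(f) = 0$, (b) parabolic $\Rightarrow d^{\infty}(f) = 0$, and (c) hyperbolic $\Rightarrow d^{\infty}(f) > 0$.

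Cases (a) and (c) are straightforward: in (a) the orbit is bounded so $|f^{n} x - x|/n \to 0$; in (c) a hyperbolic $f$ admits an invariant quasi-axis and two distinct fixed points $\xi^{\pm} \in \partial X$, the orbit $\{f^{n} x\}$ tracks this quasi-axis at bounded Hausdorff distance, and $|f^{n} x - x|$ grows linearly in $n$ with slope equal to the translation length along the axis. For (b), I would argue by contrapositive. Suppose $d^{\infty}(f) = l > 0$; then $|f^{n} x - x| \geq n l$, which already rules out the elliptic case. Using the identity $(f^{n} x, f^{m} x)_{x} = \tfrac{1}{2}(|f^{n} x - x| + |f^{m} x - x| - |f^{n-m} x - x|)$ for $n > m$, together with $|f^{k} x - x|/k \to l$, this Gromov product tends to $+\infty$ as $m, n \to \infty$, so $\{f^{n} x\}$ and $\{f^{-n} x\}$ converge to boundary points $\xi^{+}, \xi^{-} \in \partial X$.

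The main obstacle is showing $\xi^{+} \neq \xi^{-}$, which excludes the parabolic case and forces $f$ to be hyperbolic; equivalently, one must verify that $(f^{n} x, f^{-n} x)_{x} = \tfrac{1}{2}(2 |f^{n} x - x| - |f^{2n} x - x|)$ stays bounded in $n$. Here Theorem \ref{teoremauno} applied to $f^{n}$ enters, giving $|f^{2n} x - x| \leq |f^{n} x - x| + n l + 2 \delta$ and hence the upper bound $(f^{n} x, f^{-n} x)_{x} \leq |f^{n} x - x| - n l$. To finish one needs the linear defect $|f^{n} x - x| - n l$ to remain bounded in $n$; a dyadic iteration of Theorem \ref{teoremauno} yields only a logarithmic bound, so genuine boundedness appears to require a more refined use of the four point condition (for example, comparing $|f^{n+m} x - x|$ and $|f^{n-m} x - x|$ with $|f^{n} x - x|$ and $|f^{m} x - x|$), essentially amounting to proving that the orbit of $f$ forms a quasi-geodesic in $X$.
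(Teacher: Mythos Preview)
The paper does not prove this proposition; it is simply cited from \cite[Chpt.~10, Prop.~6.3]{papa}. There is therefore no argument in the paper to compare your attempt against.

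On your attempt itself: the overall plan is reasonable and you correctly locate the crux, but two points deserve correction. First, the inequality $|f^{2n}x-x|\le |f^{n}x-x|+nl+2\delta$ that you extract from Theorem~\ref{teoremauno} yields a \emph{lower} bound on $(f^{n}x,f^{-n}x)_x$, not the upper bound you state; the upper bound $(f^{n}x,f^{-n}x)_x\le |f^{n}x-x|-nl$ is correct but comes instead from the trivial estimate $|f^{2n}x-x|\ge 2n l$ (the infimum characterisation of $d^{\infty}$). Second, in (c) you appeal to an invariant quasi-axis, but in the generality of the paper ($X$ neither proper nor geodesic) the existence of such an axis is essentially equivalent to what you are trying to prove, so this step is circular as written. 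You are right that the remaining gap---boundedness of the defect $|f^{n}x-x|-nl$, i.e.\ that the orbit $n\mapsto f^{n}x$ is a quasi-geodesic---is the heart of the matter; this is precisely what the reference \cite{papa} establishes, and your sketch does not yet close it.
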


There also exists a classification for semigroups of isometries in three disjoint families (also called \emph{elliptic, parabolic} and \emph{hyperbolic}) obtained by Das, Simmons and Urba\'nski.  An application of Theorem \ref{bergerwang} is the following generalization of Proposition \ref{hyppos}, which serves as a motivation to study the joint stable length $\mfrak{D}(\Sigma)$:
\begin{teo}\label{generaliza}
The semigroup generated by a bounded set $\Sigma \subset \Isom(X)$ is hyperbolic if and only if $\mfrak{D}(\Sigma)>0.$
\end{teo}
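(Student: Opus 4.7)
The plan is to derive Theorem \ref{generaliza} as a rather direct consequence of the Berger-Wang analogue (Theorem \ref{bergerwang}), combined with the single-isometry characterization of Proposition \ref{hyppos}. The key point to bear in mind is the Das--Simmons--Urba\'nski trichotomy for semigroups, in which ``hyperbolic'' is equivalent to the semigroup containing at least one hyperbolic isometry; granted this, both implications come out of unpacking definitions.

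For the forward implication, suppose $\mfrak{D}(\Sigma)>0$. By Theorem \ref{bergerwang} we have
\begin{equation}
\lim_{n \to \infty}\frac{d^{\infty}(\Sigma^{2n})}{2n}=\mfrak{D}(\Sigma)>0,\notag
\end{equation}
so for some $n$ the quantity $d^{\infty}(\Sigma^{2n})=\sup_{h\in \Sigma^{2n}}d^{\infty}(h)$ is strictly positive. By the definition of the supremum, there exists $h \in \Sigma^{2n}$ with $d^{\infty}(h)>0$, and Proposition \ref{hyppos} then forces $h$ to be hyperbolic. Since $h$ lies in the semigroup generated by $\Sigma$, that semigroup contains a hyperbolic isometry, and hence is hyperbolic.

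For the converse, assume the semigroup $S$ generated by $\Sigma$ is hyperbolic. Then $S$ contains some hyperbolic element $h$, which by definition of $S$ is a word $h=f_{1}\cdots f_{k}$ with $f_{i}\in \Sigma$; in particular $h \in \Sigma^{k}$. Proposition \ref{hyppos} gives $d^{\infty}(h)>0$, and since stable length is homogeneous under powers, $h^{m}\in \Sigma^{mk}$ satisfies $d^{\infty}(h^{m})=m\+d^{\infty}(h)$. Consequently
\begin{equation}
\frac{d^{\infty}(\Sigma^{mk})}{mk}\geq \frac{d^{\infty}(h^{m})}{mk}=\frac{d^{\infty}(h)}{k}>0\notag
\end{equation}
for every $m\geq 1$, so $\limsup_{n\to\infty}d^{\infty}(\Sigma^{n})/n \geq d^{\infty}(h)/k>0$. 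Invoking Theorem \ref{bergerwang} once more yields $\mfrak{D}(\Sigma)>0$.

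The only genuine obstacle I anticipate is verifying that the DSU notion of a ``hyperbolic semigroup'' is in fact characterized by the presence of a hyperbolic element (as opposed to some stronger dynamical condition such as non-elementarity). Once this is established from the reference, the rest of the proof is a straightforward combination of Theorem \ref{bergerwang}, Proposition \ref{hyppos}, and the trivial inequality $d^{\infty}(h^{m})=m\+d^{\infty}(h)$.
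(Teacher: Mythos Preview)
Your proof is correct and follows essentially the same route as the paper: both arguments combine Theorem \ref{bergerwang} with Proposition \ref{hyppos}, and the paper's definition of a hyperbolic semigroup is precisely ``contains some hyperbolic element'', so your anticipated obstacle is a non-issue. One minor remark: for the converse you do not actually need the nontrivial direction of Theorem \ref{bergerwang}; the elementary inequality $d^{\infty}(h)\leq |\Sigma^{k}|_{x}$ already gives $d^{\infty}(h)/k\leq \mfrak{D}(\Sigma)$ directly, which is how the paper (implicitly) handles that implication.
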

In addition, we give a sufficient condition for a product of two isometries to be hyperbolic, and a lower bound for the stable length of the product, improving \cite[Chpt. 9, Lem. 2.2]{papa}:
\begin{prop}\label{criterio} Let $K\geq 7\delta$ and $f,g\in \Isom(X)$ be such that $|fx-gx|>\max(|fx-x|+d^{\infty}(g),|gx-x|+d^{\infty}(f))+K$ for some $x\in X$. Then $fg$ is hyperbolic, and
\begin{equation}
d^{\infty}(fg)>d^{\infty}(f)+d^{\infty}(g)+2K-14\delta. \notag
\end{equation}
\end{prop}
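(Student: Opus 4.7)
The plan is to invoke Theorem \ref{teoremados} on the pair $(f^{-1},g)$ and use the slack $K\ge 7\delta>6\delta$ to force the ``averaging'' third entry of the maximum to be the one realizing it. The key identities are $|f^{-1}x-x|=|fx-x|$, $d^\infty(f^{-1})=d^\infty(f)$, and $|f^{-1}gx-x|=|gx-fx|=|fx-gx|$ (the last via the isometry $f^{-1}$). Substituting into \eqref{mainres} gives
\begin{equation*}
|fx-gx|\le \max\Bigl(|fx-x|+d^{\infty}(g),\ |gx-x|+d^{\infty}(f),\ \tfrac{|fx-x|+|gx-x|+d^{\infty}(f^{-1}g)}{2}\Bigr)+6\delta.
\end{equation*}

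Since the hypothesis forces $|fx-gx|$ to exceed each of the first two arguments by more than $K\ge 7\delta>6\delta$, neither of them can realize the maximum. The averaging entry therefore does, yielding
$d^{\infty}(f^{-1}g)\ge 2|fx-gx|-|fx-x|-|gx-x|-12\delta$. Summing the two directions of the hypothesis produces $2|fx-gx|>|fx-x|+|gx-x|+d^{\infty}(f)+d^{\infty}(g)+2K$, whence
\begin{equation*}
d^{\infty}(f^{-1}g)>d^{\infty}(f)+d^{\infty}(g)+2K-12\delta.
\end{equation*}

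To obtain the stated bound on $d^{\infty}(fg)$ rather than on $d^{\infty}(f^{-1}g)$, I would rerun the whole argument after replacing $f$ by $f^{-1}$; the conclusion then involves $d^{\infty}(fg)$, but the hypothesis becomes a condition on $|f^{-1}x-gx|=|fgx-x|$. To transport the original hypothesis to this new form, use $|fgx-x|=|gx-f^{-1}x|\ge|fx-gx|-|fx-f^{-1}x|$ together with Theorem \ref{teoremauno} applied to $f$ at $f^{-1}x$, which gives $|fx-f^{-1}x|\le|fx-x|+d^{\infty}(f)+2\delta$. The additional $2\delta$ incurred accounts for the shift from $-12\delta$ to $-14\delta$ in the final constant, producing $d^{\infty}(fg)>d^{\infty}(f)+d^{\infty}(g)+2K-14\delta$; hyperbolicity of $fg$ is then immediate from Proposition \ref{hyppos}.

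The main obstacle will be this last step: cleanly transporting the hypothesis on $|fx-gx|$ to one on $|fgx-x|$ within a $2\delta$ budget. Using either direction of the original hypothesis alone, the transported inequality $|fgx-x|\ge\max(\cdot)+K-2\delta$ does not manifestly control the asymmetry between $|fx-x|$ and $|gx-x|$ that appears in the triangle bound. I expect one must use \emph{both} directions of the hypothesis symmetrically — essentially arguing via the average $\tfrac12(A+B)$ rather than the max of the two arguments $A,B$ — to ensure the transported quantity still strictly exceeds each of $|fx-x|+d^{\infty}(g)$ and $|gx-x|+d^{\infty}(f)$ by more than $6\delta$, so that the averaging argument of Theorem \ref{teoremados} applies.
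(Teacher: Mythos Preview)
Your first move --- applying Theorem~\ref{teoremados} to the pair $(f^{-1},g)$ so that the left side becomes $|f^{-1}gx-x|=|fx-gx|$ --- is neat, and the conclusion $d^{\infty}(f^{-1}g)>d^{\infty}(f)+d^{\infty}(g)+2K-12\delta$ is correct. The difficulty is exactly what you flag: this bounds $d^{\infty}(f^{-1}g)$, not $d^{\infty}(fg)$, and there is no general relation between the two.

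Your proposed transport does not close the gap. The triangle inequality plus Theorem~\ref{teoremauno} gives only
\[
|fgx-x|=|gx-f^{-1}x|\ \ge\ |fx-gx|-|fx-f^{-1}x|\ \ge\ |fx-gx|-|fx-x|-d^{\infty}(f)-2\delta,
\]
so you lose an uncontrolled term $|fx-x|+d^{\infty}(f)$, not merely $2\delta$. With that loss you cannot recover either of the inequalities $|fgx-x|>|fx-x|+d^{\infty}(g)+6\delta$ or $|fgx-x|>|gx-x|+d^{\infty}(f)+6\delta$ needed for the rerun: if $|fx-x|$ is large compared to $|gx-x|$ the first one fails outright, and no symmetrisation of the hypothesis repairs this, because the defect $|fx-x|+d^{\infty}(f)$ is intrinsically asymmetric.

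The paper does not derive Proposition~\ref{criterio} from the \emph{statement} of Theorem~\ref{teoremados}; it is a corollary of its \emph{proof}. Under the Case~\emph{ii}) hypothesis (which your assumption with $K\ge 7\delta>4\delta$ guarantees) that proof establishes two intermediate facts: the estimate $\bigl||fgx-x|-|fx-gx|\bigr|\le 2\delta$ (equation~\eqref{cuatro}) and the sharper bound $|fgx-x|\le \tfrac12\bigl(|fx-x|+|gx-x|+d^{\infty}(fg)\bigr)+5\delta$. Chaining these with the hypothesis gives $d^{\infty}(fg)>d^{\infty}(f)+d^{\infty}(g)+2K-14\delta$ directly. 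The $2\delta$ transport you were searching for is precisely~\eqref{cuatro}, and it comes from one application of the four-point condition (to $x,fx,fgx,f^{2}x$) together with Theorem~\ref{teoremauno}, not from the triangle inequality. Even if you feed that corrected transport into your black-box rerun, you would need $K\ge 8\delta$ to rule out the first two entries, and since the $2\delta$ loss is doubled when you pass to $2|fgx-x|$ you would finish with $2K-16\delta$; the paper's route through the internals of the proof is what produces the stated constants.
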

\paragraph{Continuity results.} The group $\Isom(X)$ possesses a natural topology induced by the product topology on $X^X$, which is called the \emph{point-open topology}. In this space it coincides with the compact-open topology \cite[Prop. 5.1.2]{geodyn}. Using Theorem \ref{teoremauno} we will prove that the stable length behaves well with respect to this topology:
\begin{teo}\label{contsd}
The map $f \mapsto d^{\infty}(f)$ is continuous on $\Isom(X)$ with the point-open topology.
\end{teo}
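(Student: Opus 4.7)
The plan is to show that $d^{\infty}$ is both upper and lower semicontinuous on $\Isom(X)$ with the point-open topology, which on isometries amounts to pointwise convergence. First I would record the easy observation that pointwise convergence is preserved under composition: if $f_{\alpha} \to f$ pointwise, then for each $x$ the estimate $|f_{\alpha}^2 x - f^2 x| \leq |f_{\alpha}(f_{\alpha} x) - f_{\alpha}(fx)| + |f_{\alpha}(fx) - f(fx)| = |f_{\alpha} x - fx| + |f_{\alpha}(fx) - f(fx)|$ tends to $0$, and by induction $f_{\alpha}^{n} x \to f^{n} x$ for every $n$. Hence, for each fixed $x \in X$ and $n \geq 1$, the map $f \mapsto |f^{n} x - x|$ is continuous on $\Isom(X)$.

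Upper semicontinuity is then immediate from the standard formula $d^{\infty}(f) = \inf_{n} |f^{n} x - x|/n$: an infimum of continuous functions is upper semicontinuous.

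For lower semicontinuity I would invoke Theorem \ref{teoremauno}, applied not to $f$ itself but to the iterate $f^{n}$. Since $d^{\infty}(f^{n}) = n\, d^{\infty}(f)$, the inequality becomes $|f^{2n} x - x| \leq |f^{n} x - x| + n\, d^{\infty}(f) + 2\delta$, which rearranges to
\[
d^{\infty}(f) \;\geq\; b_{n}(f) \;:=\; \frac{|f^{2n} x - x| - |f^{n} x - x| - 2\delta}{n}.
\]
Each $b_{n}$ is continuous in $f$ by the observation above. Moreover, since $|f^{n} x - x|/n \to d^{\infty}(f)$ (and so $|f^{2n} x - x|/n \to 2\, d^{\infty}(f)$), one has $b_{n}(f) \to d^{\infty}(f)$. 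Combined with the uniform upper bound $b_{n}(f) \leq d^{\infty}(f)$, this yields $d^{\infty}(f) = \sup_{n} b_{n}(f)$, exhibiting $d^{\infty}$ as a supremum of continuous functions and hence lower semicontinuous.

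The main conceptual obstacle, and the reason that mere subadditivity is insufficient, is that the Fekete-type expression $|f^{n} x - x|/n$ only provides the \emph{upper} semicontinuity side. To get continuity one needs an explicit \emph{lower} bound on $d^{\infty}(f)$ in terms of finitely many iterates evaluated at a single basepoint, and this is precisely what Theorem \ref{teoremauno} (applied to powers of $f$) supplies.
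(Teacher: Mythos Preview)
Your proposal is correct and follows essentially the same approach as the paper: upper semicontinuity from the Fekete infimum, and lower semicontinuity by applying Theorem~\ref{teoremauno} to $f^n$ to write $d^{\infty}(f)=\sup_{n\geq 1}\bigl(|f^{2n}x-x|-|f^{n}x-x|-2\delta\bigr)/n$ as a supremum of continuous functions. The only cosmetic difference is that you verify continuity of $f\mapsto |f^{n}x-x|$ directly via induction and the triangle inequality, whereas the paper appeals to the topological group structure of $\Isom(X)$.
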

\begin{obs} The stable length may be discontinuous if we do not assume that $X$ is $\delta$-hyperbolic. Take for example $X= \mbb{C}$ with the Euclidean metric, and let $f_{u}:\mbb{C} \rightarrow \mbb{C}$ be given by $f_{u}(z)=uz+1$, where $u$ is a parameter in the unit circle. For $u\neq 1$ we have that $f_{u}$ is a rotation, and hence $d^{\infty}(f_{u})=0$. But $f_1$ is a translation and $d^{\infty}(f_{1})=1$. However, the stable length is of course upper semi-continuous for \emph{all} metric spaces. \end{obs}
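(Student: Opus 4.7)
The plan is to verify the two separate claims that make up the remark: first, that the family $f_u(z)=uz+1$ on $\mathbb{C}$ produces an explicit discontinuity of $d^{\infty}$ (so the $\delta$-hyperbolicity hypothesis in Theorem~\ref{contsd} really is needed); and second, that upper semi-continuity of $d^{\infty}$ nevertheless holds on $\Isom(X)$ for \emph{any} metric space $X$.

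For the counterexample I would first observe that each $f_u$ is an isometry of $\mathbb{C}$: a rotation of angle $\arg(u)$ about the fixed point $\frac{1}{1-u}$ when $u\neq 1$, and the unit translation when $u=1$. Since evaluation at any given $z$ depends continuously on $u$, the map $u\mapsto f_u$ is continuous from the unit circle into $\Isom(\mathbb{C})$ with the point-open topology. A direct computation gives $f_u^n(0)=1+u+\cdots+u^{n-1}=\frac{u^n-1}{u-1}$ for $u\neq 1$, so $|f_u^n(0)|\leq \frac{2}{|u-1|}$ is bounded in $n$ and therefore $d^{\infty}(f_u)=0$. On the other hand $f_1^n(0)=n$, giving $d^{\infty}(f_1)=1$, and the jump at $u=1$ is the desired discontinuity.

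For upper semi-continuity in an arbitrary metric space, I would use the infimum characterization $d^{\infty}(f)=\inf_n\frac{|f^n x-x|}{n}$ (valid in any metric space by subadditivity of $n\mapsto |f^n x-x|$). Since the infimum of a family of continuous functions is upper semi-continuous, it is enough to show that for each fixed $x\in X$ and each $n\geq 1$ the map $f\mapsto |f^n x-x|$ is continuous on $\Isom(X)$ in the point-open topology; by continuity of the distance this reduces to continuity of the orbit map $f\mapsto f^n x$. I would prove the latter by induction on $n$: the case $n=1$ is the very definition of the point-open topology, and for the inductive step, if $f_k\to f$ pointwise then
\[
|f_k^n x-f^n x|\leq |f_k(f_k^{n-1}x)-f_k(f^{n-1}x)|+|f_k(f^{n-1}x)-f(f^{n-1}x)|,
\]
where the first term equals $|f_k^{n-1}x-f^{n-1}x|$ because $f_k$ is an isometry; both summands then tend to $0$ by the inductive hypothesis and the $n=1$ case respectively.

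The only real subtlety is this use of the isometry property to propagate pointwise convergence of $f_k$ to pointwise convergence of the iterates $f_k^n$; if one dropped the isometry hypothesis and worked merely with continuous self-maps, the inductive step would fail. Once the orbit-map continuity is granted, upper semi-continuity of $d^{\infty}$ follows formally from the infimum formula, without any assumption on the geometry of $X$; combined with the explicit Euclidean example above, this proves both assertions of the remark.
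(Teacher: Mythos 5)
Your proposal is correct and takes essentially the same route as the paper: the rotations $f_u$ ($u\neq 1$) have bounded orbits, as your explicit bound $|f_u^n(0)|\leq 2/|u-1|$ shows, giving $d^{\infty}(f_u)=0$ against $d^{\infty}(f_1)=1$, and upper semi-continuity follows from writing $d^{\infty}(f)=\inf_n |f^n x-x|/n$ as an infimum of continuous functions, exactly as in the paper's proof of Theorem \ref{contsd}. The only (harmless) divergence is in justifying continuity of $f\mapsto |f^n x-x|$: the paper's Corollary \ref{corocont} cites the fact that $\Isom(X)$ with the point-open topology is a topological group, whereas you prove it by a self-contained induction exploiting the isometry property to control $|f_k^n x - f^n x|$ --- a nice elementary substitute; just note that since the point-open topology need not be metrizable, your convergence argument should be read with nets rather than sequences, which it accommodates verbatim.
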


Since in general the space $\Isom(X)$ is not metrizable, we need a suitable generalization of the Hausdorff distance. Let $\mcal{C}(\Isom(X))$ be the set of non empty compact sets of isometries of $X$ with the point-open topology, and we use on this set the \emph{Vietoris topology} \cite{mic}.
This topology is natural in the sense that its separation, compactness and connectivity properties derive directly from the respective properties on $\Isom(X)$ \cite[\S 4]{mic}. In fact, when $\Isom(X)$ is metrizable the Vietoris topology coincides with the one induced by the Hausdorff distance.

With these notions it is easy to check that every non empty compact set $\Sigma \subset \Isom(X)$ is bounded and the joint stable length is well defined.  As a consequence of Corollary \ref{corolariotres} we have:
\begin{teo}\label{contjsd}
Endowing $\mcal{C}(\Isom(X))$ with the Vietoris topology, the joint stable length $\Sigma \mapsto \mfrak{D}(\Sigma)$ and the stable length $\Sigma \mapsto d^{\infty}(\Sigma)$ are continuous.
\end{teo}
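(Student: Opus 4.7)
The plan is to derive everything from two basic ingredients: standard continuity properties of the Vietoris topology, and Corollary \ref{corolariotres} applied at large scales. Recall two facts about the Vietoris topology on $\mcal{C}(Y)$ for a Hausdorff space $Y$: (i) for any continuous $\phi: Y \to \mbb{R}$, the map $K \mapsto \sup_{K} \phi$ is continuous; (ii) for any continuous $\phi: Y_{1} \times \cdots \times Y_{n} \to Z$, the induced map $(K_{1}, \dots, K_{n}) \mapsto \phi(K_{1} \times \cdots \times K_{n})$ is continuous. In $\Isom(X)$ with the point-open topology, composition is jointly continuous: if $f_{k} \to f$ and $g_{k} \to g$ pointwise, then $|f_{k}g_{k}(x) - fg(x)| \leq |f_{k}(g(x)) - f(g(x))| + |g_{k}(x) - g(x)| \to 0$, using that each $f_{k}$ is an isometry. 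Combining (i) and (ii), for every fixed $n$ the map $\Sigma \mapsto |\Sigma^{n}|_{x} = \sup_{g \in \Sigma^{n}}|gx-x|$ is continuous on $\mcal{C}(\Isom(X))$.

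Continuity of $\Sigma \mapsto d^{\infty}(\Sigma) = \sup_{f \in \Sigma} d^{\infty}(f)$ then follows immediately from (i) and Theorem \ref{contsd}. Upper semi-continuity of $\Sigma \mapsto \mfrak{D}(\Sigma)$ is automatic, since $\mfrak{D}(\Sigma) = \inf_{n \geq 1} |\Sigma^{n}|_{x}/n$ is an infimum of continuous functions. The heart of the argument is lower semi-continuity of $\mfrak{D}$, for which I apply Corollary \ref{corolariotres} not to $\Sigma$ itself but to $\Sigma^{n}$: since $\mfrak{D}(\Sigma^{n}) = n\, \mfrak{D}(\Sigma)$, this yields
\[
n\, \mfrak{D}(\Sigma) \geq |\Sigma^{2n}|_{x} - |\Sigma^{n}|_{x} - 6\delta
\]
for every $\Sigma \in \mcal{C}(\Isom(X))$ and every $n \geq 1$.

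Given $\epsilon > 0$, I fix $n$ so large that $|\Sigma^{n}|_{x}/n < \mfrak{D}(\Sigma) + \epsilon/4$ and $6\delta/n < \epsilon/4$. Because $|\Sigma^{2n}|_{x} \geq 2n\, \mfrak{D}(\Sigma)$ from the infimum definition, the displayed inequality is nearly saturated at $\Sigma$. By continuity of $\Sigma' \mapsto |(\Sigma')^{n}|_{x}$ and $\Sigma' \mapsto |(\Sigma')^{2n}|_{x}$ for those fixed values of $n$ and $2n$, I choose a Vietoris neighborhood $U$ of $\Sigma$ on which $|(\Sigma')^{2n}|_{x} \geq 2n\, \mfrak{D}(\Sigma) - n\epsilon/4$ and $|(\Sigma')^{n}|_{x} \leq n\, \mfrak{D}(\Sigma) + n\epsilon/2$. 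Applied to $\Sigma'$, the inequality then produces $\mfrak{D}(\Sigma') \geq \mfrak{D}(\Sigma) - \epsilon$ throughout $U$. The only non-routine point is the choice of scale: Corollary \ref{corolariotres} at $n = 1$ is too weak to control $\mfrak{D}(\Sigma')$ locally, and one must push $n$ large enough that the asymptotic $|\Sigma^{2n}|_{x}/(2n) \to \mfrak{D}(\Sigma)$ turns the one-sided bound into an approximate equality before perturbing $\Sigma$.
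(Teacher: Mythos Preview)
Your proof is correct and follows essentially the same approach as the paper: continuity of $\Sigma\mapsto|\Sigma^{n}|_{x}$ via the Vietoris topology plus the group structure, upper semi-continuity from $\mfrak{D}(\Sigma)=\inf_{n}|\Sigma^{n}|_{x}/n$, and lower semi-continuity from Corollary~\ref{corolariotres} applied to $\Sigma^{n}$. The only cosmetic difference is that the paper packages the lower semi-continuity step more tersely, observing (as in the proof of Theorem~\ref{contsd}) that the inequality $n\,\mfrak{D}(\Sigma)\ge |\Sigma^{2n}|_{x}-|\Sigma^{n}|_{x}-6\delta$ in fact gives $\mfrak{D}(\Sigma)=\sup_{n}\bigl(|\Sigma^{2n}|_{x}-|\Sigma^{n}|_{x}-6\delta\bigr)/n$, so $\mfrak{D}$ is a supremum of continuous functions; your explicit $\epsilon$--$n$ argument unwinds exactly this identity.
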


\paragraph{Organization of the paper.}
Section \ref{secciondos} is devoted to the relationship between Theorems \ref{teoremauno} and \ref{teoremados} and matrix theory. Proving these results and applying them in the hyperbolic plane we deduce Bochi's inequalities in dimension 2. Also we give a counterexample to the finiteness conjecture on $\Isom(\mbb{H}^{2})$. Then in Section \ref{secciontres} we prove Theorems \ref{teoremauno} and \ref{teoremados}. In Section \ref{seccioncuatro} we prove the Berger-Wang theorem for sets of isometries and study the basic properties of the stable lengths, reviewing some known results, and their geometric or dynamical interpretations, specifically on the classification of generated semigroups of isometries on hyperbolics spaces. In Section \ref{seccioncinco} we study the point-open and Vietoris topologies on $\Isom(X)$ and $\mcal{C}(\Isom(X))$ respectively and we give proofs of the continuity properties of the stable length and the joint stable length. In section \ref{seccionseis} we pose some open questions related to the joint stable length. We leave Appendix \ref{appendix} for the technical results that we used in Section \ref{seccioncinco}, and we prove them for the Vietoris topology of an arbitrary topological group.

\section{The case of the hyperbolic plane}\label{secciondos}

\subsection{Derivation of matrix inequalities}

In this section we relate the stable lengths for sets of isometries and the spectral radii for sets of matrices. For that we study the hyperbolic plane.

Let $\mbb{H}^2$ be the upper-half plane $\left\{{z \in \mbb{C} \hspace{1mm}:\hspace{1mm} \textup{Im}(z)>0 }\right\}$ endowed with the Riemannian metric $ds^2=dz^2/\textup{Im}(z)^{2}.$ This space is $\log{2}$-hyperbolic ($\log{2}$ is the best possible constant \cite[Cor. 5.4]{nica}). It is known that $\textup{SL}^{\pm}_2(\mbb{R})=\left\{{A\in M_{2}(\mbb{R}) : \det{A}=\pm 1}\right\}$ is isomorphic to $\Isom(\mbb{H}^{2})$, with isomorphism given by
\begin{equation}
A=\begin{pmatrix}
a & b \\
c & d \\
\end{pmatrix}
\mapsto \tilde{A}z=\begin{cases}
\displaystyle\frac{az+b}{cz+d} & \text{ if }\det A=1,\vspace{2ex}\\
\displaystyle\frac{a\overline{z}+b}{c\overline{z}+d} & \text{ if }\det A=-1.
\end{cases} \notag
\end{equation}
The relation between the distance $d_{\mbb{H}^2}$ and the Euclidean operator norm $\|.\|_{2}$ on $M_{2}(\mbb{R})$ is established in the following proposition:
\begin{prop}\label{proposicioncuatro}
For every $A \in \textup{SL}^{\pm}_2(\mbb{R})$ and every bounded set $\mcal{M} \subset \textup{SL}^{\pm}_2(\mbb{R})$ the following holds:\begin{itemize}
\item [i)]$d_{\mbb{H}^2}(\tilde{A}i,i)=2 \log{(\|A\|_{2})}$.
\item [ii)]$d^{\infty}(\tilde{A})=2\log(\rho(A))$.
\item [iii)]$\mfrak{D}(\tilde{\mcal{M}})=2 \log{(\mfrak{R}({\mcal{M}}))}$, where $\tilde{\mcal{M}}=\left\{{\tilde{B} \colon B  \in \mcal{M}}\right\} \subset \Isom(\mbb{H}^2)$.
\end{itemize}
\end{prop}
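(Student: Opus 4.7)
The plan is to establish (i) by a direct computation in coordinates, and then obtain (ii) and (iii) as formal consequences via Gelfand's formula and the definition of the joint spectral radius.

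For part (i), I would first recall the standard formula $\cosh d_{\mathbb{H}^2}(z,w) = 1 + \frac{|z-w|^2}{2\operatorname{Im}(z)\operatorname{Im}(w)}$ and, writing $A = \bigl(\begin{smallmatrix} a & b \\ c & d \end{smallmatrix}\bigr)$, compute $\tilde{A}i$ explicitly. For $\det A = 1$, clearing denominators gives $\tilde{A}i = \frac{(ac+bd) + i}{c^2+d^2}$, so $\operatorname{Im}(\tilde{A}i) = (c^2+d^2)^{-1}$; the case $\det A = -1$ is analogous, yielding the same imaginary part. The key algebraic identity is
\begin{equation}
(ac+bd)^2 + (ad-bc)^2 = (a^2+b^2)(c^2+d^2), \notag
\end{equation}
which combined with $(\det A)^2 = 1$ allows me to simplify $|\tilde{A}i - i|^2 (c^2+d^2)$ to $a^2+b^2+c^2+d^2 - 2$. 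Hence $2\cosh d_{\mathbb{H}^2}(\tilde{A}i,i) = \|A\|_F^2$, the squared Frobenius norm.

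Next, I would connect this with $\|A\|_2$. Since $|\det A| = 1$, the singular values of $A$ are $\sigma$ and $\sigma^{-1}$ with $\sigma = \|A\|_2$, so $\|A\|_F^2 = \sigma^2 + \sigma^{-2}$. Setting $t = 2\log\|A\|_2$, we get $2\cosh t = \sigma^2 + \sigma^{-2} = \|A\|_F^2 = 2\cosh d_{\mathbb{H}^2}(\tilde{A}i, i)$, and since both quantities are non-negative this proves $d_{\mathbb{H}^2}(\tilde{A}i, i) = 2\log\|A\|_2$.

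Parts (ii) and (iii) are then quick consequences. For (ii), apply (i) to $A^n$ and divide by $n$:
\begin{equation}
d^{\infty}(\tilde{A}) = \lim_{n\to\infty}\frac{d_{\mathbb{H}^2}(\widetilde{A^n}i,i)}{n} = \lim_{n\to\infty}\frac{2\log\|A^n\|_2}{n} = 2\log\rho(A), \notag
\end{equation}
by Gelfand's formula. For (iii), the isomorphism $A \mapsto \tilde{A}$ is multiplicative, so $\tilde{\mathcal{M}}^n = \widetilde{\mathcal{M}^n}$; taking the supremum over $\mathcal{M}^n$ in (i), dividing by $n$, and passing to the limit gives $\mathfrak{D}(\tilde{\mathcal{M}}) = 2\log\mathfrak{R}(\mathcal{M})$ directly from the definition of the joint spectral radius.

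The main obstacle is purely computational: verifying the identity $2\cosh d_{\mathbb{H}^2}(\tilde{A}i,i) = \|A\|_F^2$, which requires the algebraic simplification above and a uniform treatment of the $\det A = \pm 1$ cases. Once that is done, everything else is a direct consequence of definitions and the submultiplicativity underlying Gelfand's formula.
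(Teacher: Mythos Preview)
Your proposal is correct. For parts (ii) and (iii) you do exactly what the paper does: derive them from (i) via Gelfand's formula and the definition of the joint spectral radius.

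For part (i) your route differs from the paper's. The paper uses the singular value decomposition $A = UDV$ with $U,V$ orthogonal and $D$ diagonal to reduce to two trivial cases: if $A$ is orthogonal then $\tilde A$ fixes $i$ and both sides vanish; if $A=\mathrm{diag}(\sigma,\sigma^{-1})$ then $\tilde A i = \sigma^{2} i$ and $d_{\mathbb H^2}(\sigma^{2} i,i)=2\log\sigma$. You instead compute directly with the formula for $\cosh d_{\mathbb H^2}$ and the identity $(ac+bd)^2+(ad-bc)^2=(a^2+b^2)(c^2+d^2)$ to obtain $2\cosh d_{\mathbb H^2}(\tilde A i,i)=\|A\|_F^2$, invoking the singular values only at the end to convert $\|A\|_F^2=\sigma^2+\sigma^{-2}$ into $2\log\|A\|_2$. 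Both arguments are short and elementary; the paper's is more geometric (exploiting that the stabilizer of $i$ is exactly the orthogonal group), while yours is a uniform algebraic computation that handles $\det A=\pm 1$ in one stroke at the cost of a little more bookkeeping.
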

\begin{proof} By the definition of the joint stable length and Gelfand's formula $\rho(A)=\lim_{n \to \infty}{(\|A^n\|_{2})^{1/n}}$ it is easy to see that $ii)$ and $iii)$ are consequences of $i)$.

The proof of $i)$ is simple. In the case that $\tilde{A}$ fixes $i$, that is, $A$ is an orthogonal matrix, the equality is trivial. In the case that $A$ is a diagonal matrix, the proof is a straightforward computation. The general case follows by considering the singular value decomposition.
\end{proof}

\begin{coro}\label{corolariodosuno}
For every $A \in \textup{SL}^{\pm}_2(\mbb{R})$ and $z \in \mbb{H}^2$:
\begin{equation}
d_{\mbb{H}^2}(\tilde{A}z,z)=2 \log{(\|SAS^{-1}\|_{2})} \notag
\end{equation}
where $S$ is any element in $\textup{SL}^{\pm}_2(\mbb{R})$ that satisfies $\tilde{S}z=i$.
\end{coro}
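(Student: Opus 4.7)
The plan is to reduce the claim directly to Proposition \ref{proposicioncuatro}(i) by using the isometry $\tilde{S}$ to move the basepoint $z$ to $i$, where that proposition already applies. Before doing so, I would note that such an $S$ exists, since even the subgroup $\textup{SL}_2(\mbb{R})$ acts transitively on $\mbb{H}^2$: for $z = x+iy$ an explicit determinant-$1$ choice has first row $(y^{-1/2}, -xy^{-1/2})$ and second row $(0, y^{1/2})$.

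Using that $\tilde{S}$ is an isometry with $\tilde{S}z = i$, and therefore $\tilde{S}^{-1}(i) = z$, I would write
\begin{equation*}
d_{\mbb{H}^2}(\tilde{A}z, z) = d_{\mbb{H}^2}(\tilde{S}\tilde{A}z,\, \tilde{S}z) = d_{\mbb{H}^2}\bigl(\tilde{S}\tilde{A}\tilde{S}^{-1}(i),\, i\bigr).
\end{equation*}
The key algebraic ingredient is that $B \mapsto \tilde{B}$ defines a group homomorphism from $\textup{SL}^{\pm}_2(\mbb{R})$ to $\Isom(\mbb{H}^2)$, whence $\tilde{S}\tilde{A}\tilde{S}^{-1} = \widetilde{SAS^{-1}}$. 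Since $\det(SAS^{-1}) = \det(A) = \pm 1$, the matrix $SAS^{-1}$ lies in $\textup{SL}^{\pm}_2(\mbb{R})$, and Proposition \ref{proposicioncuatro}(i) applied to it gives $d_{\mbb{H}^2}(\widetilde{SAS^{-1}}(i), i) = 2\log\|SAS^{-1}\|_2$, which is exactly the desired identity. Note that the right-hand side is independent of which $S$ with $\tilde{S}z = i$ is chosen, since any two such matrices differ by left-multiplication by an orthogonal matrix, under which the operator norm of a conjugate is invariant.

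The only point requiring genuine care is the homomorphism property of $B \mapsto \tilde{B}$ in the orientation-reversing case, since $\tilde{B}$ is then defined in terms of complex conjugation. This reduces to the observation that for any Möbius transformation $T(w) = (aw+b)/(cw+d)$ with real coefficients one has $\overline{T(w)} = T(\bar{w})$; with this identity in hand, verifying the four cases determined by the signs of $\det S$ and $\det A$ is routine. Apart from this bookkeeping, the corollary is a one-line consequence of Proposition \ref{proposicioncuatro}(i).
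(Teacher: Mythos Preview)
Your proof is correct and follows essentially the same route as the paper: apply the isometry $\tilde S$ to move the basepoint to $i$, use the homomorphism property $\tilde S\tilde A\tilde S^{-1}=\widetilde{SAS^{-1}}$, and then invoke Proposition~\ref{proposicioncuatro}(i). Your extra remarks on the existence of $S$, the independence of the right-hand side on the choice of $S$, and the verification of the homomorphism property in the orientation-reversing case are all welcome clarifications that the paper leaves implicit.
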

\begin{proof}
By Proposition \ref{proposicioncuatro} $i)$, $d_{\mbb{H}^2}(\tilde{A}z,z)=d_{\mbb{H}^2}(\tilde{A}\tilde{S}^{-1}i,\tilde{S}^{-1}i)=d_{\mbb{H}^2}(\tilde{S}\tilde{A}\tilde{S}^{-1}i,i)=2\log(\|SAS^{-1}\|_2),$ where we used that $\tilde{S}$ is an isometry.
\end{proof}
Now we present the lower bound for the spectral radius due to Bochi:
\begin{prop}\label{propodeljairo}
 Let $d \geq 2$. For every $A\in M_{d}(\mbb{R})$ and every operator norm $\|.\|$ on $M_{d}(\mbb{R})$:
\begin{equation}
\|A^d\|\leq (2^d-1)\rho(A)\|A\|^{d-1}. \label{propjai}
\end{equation}
\end{prop}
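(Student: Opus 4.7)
The plan is to invoke the Cayley--Hamilton theorem to express $A^{d}$ as a linear combination of the lower powers $I,A,A^{2},\ldots,A^{d-1}$, and then to estimate the coefficients by the spectral radius and the lower powers by the operator norm.

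More concretely, I would let $\lambda_{1},\ldots,\lambda_{d}\in\mathbb{C}$ denote the eigenvalues of $A$ counted with multiplicity and write $\sigma_{k}=e_{k}(\lambda_{1},\ldots,\lambda_{d})$ for the $k$-th elementary symmetric polynomial. The characteristic polynomial of $A$ is $\chi_{A}(t)=\sum_{k=0}^{d}(-1)^{k}\sigma_{k}t^{d-k}$, so Cayley--Hamilton gives the identity $A^{d}=\sum_{k=1}^{d}(-1)^{k+1}\sigma_{k}A^{d-k}$.

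The next step is to bound each coefficient. Expanding $\sigma_{k}$ as a sum of $\binom{d}{k}$ products of $k$ eigenvalues and using $|\lambda_{i}|\leq\rho(A)$, the triangle inequality yields $|\sigma_{k}|\leq\binom{d}{k}\rho(A)^{k}$. Because $\|\cdot\|$ is an operator norm, one also has the familiar estimates $\rho(A)\leq\|A\|$ and $\|A^{d-k}\|\leq\|A\|^{d-k}$. Consequently $\rho(A)^{k}\|A\|^{d-k}\leq\rho(A)\|A\|^{d-1}$ for every $k\geq 1$, and applying the triangle inequality to the Cayley--Hamilton identity produces
$$\|A^{d}\|\leq\sum_{k=1}^{d}\binom{d}{k}\rho(A)^{k}\|A\|^{d-k}\leq\left(\sum_{k=1}^{d}\binom{d}{k}\right)\rho(A)\|A\|^{d-1}=(2^{d}-1)\,\rho(A)\|A\|^{d-1},$$
which is the desired inequality.

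There is no genuine obstacle in this argument; the whole proof is a short direct computation. The only point requiring minor book-keeping is the estimate $|\sigma_{k}|\leq\binom{d}{k}\rho(A)^{k}$, which is immediate from the definition of $e_{k}$ as a sum over $k$-subsets of $\{1,\ldots,d\}$. Note that the hyperbolic-plane method developed in this section will, by contrast, give sharper constants in the $d=2$ case than the bound $2^{d}-1=3$ obtained here.
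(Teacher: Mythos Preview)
Your Cayley--Hamilton argument is correct and is essentially the original proof given by Bochi in \cite{boci}; the paper itself does not supply a proof of this proposition in full generality but simply quotes it from that reference. What the paper \emph{does} do, later in the section, is re-derive the case $d=2$ by a completely different route: it translates Theorem~\ref{teoremauno} into a matrix inequality via the identification $\textup{SL}^{\pm}_2(\mathbb{R})\cong\Isom(\mathbb{H}^2)$ of Proposition~\ref{proposicioncuatro}, and then passes from the Euclidean operator norm to an arbitrary one using Lemma~\ref{lemadeljairo}. So your approach and the paper's are not in competition; yours proves the statement as written, while the paper's hyperbolic-plane method is there to illustrate that the geometric inequality \eqref{teouno} recovers (a version of) the matrix inequality.

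One correction to your closing remark: the hyperbolic-plane derivation does \emph{not} sharpen the constant in the $d=2$ case. It produces the constant $2C_0^{2}$, where $C_0>1$ is the comparison constant of Lemma~\ref{lemadeljairo}; the paper explicitly says it is ``replacing the constant $(2^2-1)$ by $2C_0^2$'' and makes no claim that $2C_0^2<3$. The point of that passage is conceptual (the matrix inequality is a shadow of the hyperbolic one), not quantitative.
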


The generalization of Proposition \ref{propodeljairo} to a lower bound for the joint spectral radius is as follows:

\begin{teo}[Bochi \cite{boci}]\label{teoremadeljairo}
There exists $C=C(d)>1$ such that, for every bounded set $\mcal{M} \subset M_{d}(\mbb{R})$ and every operator norm $\|.\|$ on $M_{d}(\mbb{R})$:
\begin{equation}
\sup_{A_i \in \mcal{M}}{\|A_1\dots A_d\|}\leq C\mfrak{R}(\mcal{M})\sup_{A\in \mcal{M}}{\|A\|^{d-1}}. \label{teojai}
\end{equation}
\end{teo}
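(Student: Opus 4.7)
The strategy is to deduce Bochi's inequality in dimension $d=2$ by translating the hyperbolic-plane results of this paper through the isomorphism $\textup{SL}^{\pm}_2(\mbb{R}) \cong \Isom(\mbb{H}^2)$ of Proposition \ref{proposicioncuatro}; for $d\geq 3$ the hyperbolic-plane machinery no longer applies and one must defer to Bochi's original argument in \cite{boci}.

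First, I would apply Theorem \ref{teoremados} to $X=\mbb{H}^2$ (where $\delta=\log 2$), base point $x=i$, and $f=\tilde{A}_1$, $g=\tilde{A}_2$ for any pair $A_1,A_2\in \textup{SL}^{\pm}_2(\mbb{R})$. Using Proposition \ref{proposicioncuatro} to replace $|\tilde{A}\+ i - i|$ by $2\log\|A\|_2$ and $d^{\infty}(\tilde{A})$ by $2\log\rho(A)$, and exponentiating the resulting bound, I obtain the refined inequality
\begin{equation*}
\|A_1 A_2\|_2 \leq 8\max\!\left(\|A_1\|_2\,\rho(A_2),\ \|A_2\|_2\,\rho(A_1),\ \sqrt{\|A_1\|_2\,\|A_2\|_2\,\rho(A_1A_2)}\right),
\end{equation*}
valid for all $A_1,A_2\in\textup{SL}^{\pm}_2(\mbb{R})$.

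Next, I would observe that both sides of this inequality are homogeneous of degree one in each $A_i$ separately: under a substitution $A_i\mapsto c_iA_i$ with $c_i>0$, every term scales by exactly $c_1c_2$ (the square-root term because $\rho(A_1A_2)$ scales as $c_1c_2\,\rho(A_1A_2)$). Writing any invertible $A\in M_2(\mbb{R})$ as $A=\sqrt{|\det A|}\,U$ with $U\in\textup{SL}^{\pm}_2(\mbb{R})$ and rescaling, the inequality thus extends to all invertible pairs in $M_2(\mbb{R})$, and then to all of $M_2(\mbb{R})$ by continuity in the matrix entries. Finally, for any bounded $\mcal{M}\subset M_2(\mbb{R})$ and any $A_1,A_2\in\mcal{M}$, I would bound each spectral-radius factor by $\mfrak{R}(\mcal{M})$ using the standard estimate $\rho(A)^{1/n}\leq \mfrak{R}(\mcal{M})$ for $A\in\mcal{M}^n$. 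Since $\max(\|A_1\|_2,\|A_2\|_2,\sqrt{\|A_1\|_2\,\|A_2\|_2})\leq \sup_{A\in\mcal{M}}\|A\|_2$, this yields Bochi's inequality for the Euclidean operator norm with $C=8$; the case of an arbitrary operator norm on $M_2(\mbb{R})$ follows from the equivalence of norms on a finite-dimensional space, at the cost of enlarging the constant.

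The main obstacle is precisely the reduction from $\textup{SL}^{\pm}_2(\mbb{R})$ to $M_2(\mbb{R})$, which crucially exploits the separate homogeneity of both sides. The three-term maximum in Theorem \ref{teoremados} is essential here: starting instead from Corollary \ref{corolariotres} would yield an inequality involving $\mfrak{R}(\{A_1,A_2\})$ that is not separately homogeneous in $A_1$ and $A_2$, and so would not survive the rescaling by $\sqrt{|\det A_i|}$.
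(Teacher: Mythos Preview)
Your derivation for $d=2$ mirrors the paper's approach almost exactly: apply Theorem \ref{teoremados} in $\mbb{H}^2$, translate via Proposition \ref{proposicioncuatro}, exponentiate, extend from $\textup{SL}^{\pm}_2(\mbb{R})$ to $M_2(\mbb{R})$ by separate homogeneity and density, and then bound the spectral radii by $\mfrak{R}(\mcal{M})$. (For $d\geq 3$ the paper, like you, simply cites \cite{boci}.) Your remark that Theorem \ref{teoremados} rather than Corollary \ref{corolariotres} is required, precisely because of bi-homogeneity, is also the paper's point.

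There is, however, one genuine gap. The statement demands a \emph{single} constant $C$ valid for \emph{every} operator norm on $M_2(\mbb{R})$. Invoking ``equivalence of norms on a finite-dimensional space'' only gives, for each fixed norm $\|\cdot\|$, constants $c_1,c_2$ with $c_1\|A\|\leq\|A\|_2\leq c_2\|A\|$; the resulting bound $8\,c_2/c_1$ then depends on the norm, and there is no a priori uniform control on $c_2/c_1$ over the whole family of operator norms (think of $\|A\|=\|SAS^{-1}\|_2$ with $S$ of arbitrarily large condition number). The paper closes this gap with Lemma \ref{lemadeljairo}: there is a \emph{universal} $C_0>1$ such that every operator norm on $M_2(\mbb{R})$ is $C_0$-equivalent to $\|S\,\cdot\,S^{-1}\|_2$ for some $S\in\textup{SL}^{\pm}_2(\mbb{R})$. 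One then applies Theorem \ref{teoremados} at the base point $z=\tilde{S}^{-1}i$ (via Corollary \ref{corolariodosuno}) rather than at $i$, obtaining the inequality directly for $\|S\,\cdot\,S^{-1}\|_2$, and the universal $C_0$ yields the norm-independent constant $8C_0^{2}$. Your argument becomes correct once you replace the generic norm-equivalence step by this lemma.
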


Dividing by 2, applying the exponential function in \eqref{teouno}, and using Proposition \ref{proposicioncuatro} \emph{i}) and Corollary \ref{corolariodosuno} we obtain
\begin{equation}
\|SA^2S^{-1}\|_{2}\leq 2\rho(A)\|SAS^{-1}\|_{2}. \label{primjai}
\end{equation}

To replace $\|.\|_{2}$ by an arbitrary operator norm we use the following lemma \cite[Lem. 3.2]{boci}:

\begin{lema}\label{lemadeljairo}
There exists a constant $C_0>1$ such that for every operator norm $\|.\|$ on $M_{2}(\mbb{R})$ there exists some $S$ in $\textup{SL}^{\pm}_2(\mbb{R})$ such that for every $A \in M_{2}(\mbb{R})$:
\begin{equation}
{C_0}^{-1}\|A\|\leq\|SAS^{-1}\|_{2}\leq C_{0}\|A\|.\notag
\end{equation}
\end{lema}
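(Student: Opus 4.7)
The plan is to invoke John's ellipsoid theorem in dimension $2$ to show that any vector norm on $\mbb{R}^2$ is comparable, up to a universal constant, to a Euclidean norm; then to translate this to the level of operator norms and perform a conjugation by a suitably rescaled matrix so as to land in $\textup{SL}^{\pm}_2(\mbb{R})$.

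First, I would recall that any operator norm $\|\cdot\|$ on $M_2(\mbb{R})$ is, by definition, induced by some vector norm $\|\cdot\|_{*}$ on $\mbb{R}^2$, whose closed unit ball $B_{*}$ is a centrally symmetric convex body. John's theorem (in its centrally symmetric version) applied to $B_{*}$ yields an ellipsoid $E\subset\mbb{R}^2$, centered at the origin, satisfying $E\subseteq B_{*}\subseteq \sqrt{2}\,E$. Writing $\|\cdot\|_E$ for the Euclidean norm with unit ball $E$, these inclusions translate to the pointwise inequalities $\|v\|_{*}\leq \|v\|_E\leq \sqrt{2}\,\|v\|_{*}$ for every $v\in\mbb{R}^2$.

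Next, I would express $\|v\|_E=\|Tv\|_2$ for some invertible $T\in M_2(\mbb{R})$, obtained for instance from the Cholesky factorization of the Gram matrix of $\|\cdot\|_E$. The substitution $w=Tv$ shows that the operator norm associated with $\|\cdot\|_E$ is precisely $A\mapsto\|TAT^{-1}\|_2$. A routine homogeneity argument then passes from the vector inequalities above to the corresponding operator inequalities, giving $\frac{1}{\sqrt{2}}\,\|A\|\leq \|TAT^{-1}\|_2\leq \sqrt{2}\,\|A\|$ for every $A\in M_2(\mbb{R})$.

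Finally, to land in $\textup{SL}^{\pm}_2(\mbb{R})$, I would set $S:=|\det T|^{-1/2}\,T$, so that $|\det S|=1$. Since conjugation is invariant under scalar multiplication of $T$, one has $SAS^{-1}=TAT^{-1}$, and hence the same bound holds with $T$ replaced by $S$. This gives the conclusion with $C_0=\sqrt{2}$. The only nontrivial ingredient is the invocation of John's theorem --- the specific constant $\sqrt{2}$ being tied to the symmetric case in dimension $2$ --- and I do not anticipate serious obstacles beyond the bookkeeping between a vector norm on $\mbb{R}^2$ and its induced operator norm on $M_2(\mbb{R})$.
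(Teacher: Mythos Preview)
Your argument is correct. The paper itself does not prove this lemma; it simply quotes it from Bochi's paper \cite[Lem.~3.2]{boci}, so there is no in-paper proof to compare against.

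As for the content of your proof: the use of the symmetric John ellipsoid in dimension~$2$ to obtain $E\subseteq B_{*}\subseteq\sqrt{2}\,E$, the identification $\|v\|_E=\|Tv\|_2$, the passage from the vector-norm sandwich to the operator-norm sandwich (which indeed gives the factor $\sqrt{2}$ on each side, since $\|A\|_{\mathrm{op},E}=\sup_v \|Av\|_E/\|v\|_E$ and both numerator and denominator are controlled within a $\sqrt{2}$ factor), and the rescaling $S=|\det T|^{-1/2}T$ so that $|\det S|=1$ while $SAS^{-1}=TAT^{-1}$ --- all of this is sound. You obtain the lemma with the explicit constant $C_0=\sqrt{2}$. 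This John-ellipsoid route is the natural and standard way to prove such a statement.
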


With this lemma we can give another prove to Bochi's Proposition \ref{propodeljairo} for $d=2$, replacing the constant $(2^2-1)$ by $2{C_0}^2$, where $C_0$ is the constant given by Lemma \ref{lemadeljairo}. This involves three steps:

\emph{Step 1.} The result is valid for all operator norms and $A \in \textup{SL}^{\pm}_2(\mbb{R})$.

Consider the operator norm $\|.\|$ on $M_{2}(\mbb{R})$ and the respective $S \in \textup{SL}^{\pm}_2(\mbb{R})$ given by Lemma \ref{lemadeljairo}.
Using this in \eqref{primjai} we obtain
\begin{equation}
\|A^2\|\leq C_0\|SA^{2}S^{-1}\|_{2}\leq 2C_{0}\rho(A)\|SAS^{-1}\|_{2}\leq 2{C_{0}}^{2}\rho(A)\|A\|.\notag
\end{equation}

\emph{Step 2.} We extend the result to $A \in \textup{GL}_{2}(\mbb{R})$.

It is easy since inequality \eqref{primjai} is homogeneous in $A$.

\emph{Step 3.} We can consider $A$ an arbitrary matrix in $M_{2}(\mbb{R})$.

We use that $\textup{GL}_{2}(\mbb{R})$ is dense in $M_{2}(\mbb{R})$ considering the metric given by $\|.\|_{2}$. In this case the matrix multiplication and the spectral radius are continuous functions. So the conclusion follows.

If we do the same process to recover Theorem \ref{teoremadeljairo} in dimension 2 from Theorem \ref{teoremados} we will obtain a stronger result:

\begin{prop}
For all pairs of matrices $A,B, \in M_{2}(\mbb{R})$ and all operator norms $\|.\|$ on $M_{2}(\mbb{R})$:
\begin{equation}
\|AB\|\leq 8{C_{0}}^{2}\max{\left(\|A\|\rho(B),\|B\|\rho(A),\sqrt{\|A\|\|B\|\rho(AB)} \right)}. \label{niumat}
\end{equation}
\end{prop}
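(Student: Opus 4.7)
The plan is to mirror the three-step derivation of Bochi's Proposition \ref{propodeljairo} in dimension $2$ given just above, with Theorem \ref{teoremados} replacing Theorem \ref{teoremauno} as the analytic input. Namely, I would first establish \eqref{niumat} for $A,B \in \textup{SL}^{\pm}_2(\mbb{R})$ by transporting the statement to $\Isom(\mbb{H}^2)$, then extend it to $\textup{GL}_2(\mbb{R})$ by homogeneity, and finally to all of $M_2(\mbb{R})$ by density and continuity.

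For the first step, fix an operator norm $\|\cdot\|$ on $M_2(\mbb{R})$ and let $S \in \textup{SL}^{\pm}_2(\mbb{R})$ and the universal constant $C_0$ be those provided by Lemma \ref{lemadeljairo}. Apply Theorem \ref{teoremados} to the isometries $\tilde{A},\tilde{B}$ of $\mbb{H}^2$ (which is $\log 2$-hyperbolic) at the point $z=\tilde{S}^{-1}i$. By Corollary \ref{corolariodosuno} one has $|\tilde{A}z-z|=2\log\|SAS^{-1}\|_2$, and analogous identities for $\tilde{B}$ and for $\tilde{A}\tilde{B}=\widetilde{AB}$; Proposition \ref{proposicioncuatro}(ii) gives $d^{\infty}(\tilde{A})=2\log\rho(A)$ and similarly for $B$ and $AB$. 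Dividing \eqref{mainres} by $2$ and exponentiating converts the additive $6\delta=6\log 2$ into the multiplicative factor $e^{3\log 2}=8$, giving
\[
\|S(AB)S^{-1}\|_2 \leq 8\,\max\!\left(\|SAS^{-1}\|_2\,\rho(B),\ \|SBS^{-1}\|_2\,\rho(A),\ \sqrt{\|SAS^{-1}\|_2\,\|SBS^{-1}\|_2\,\rho(AB)}\right).
\]
Lemma \ref{lemadeljairo} applied once on the left (to replace $\|S(AB)S^{-1}\|_2$ by $\|AB\|$ at the cost of a factor $C_0$) and once inside each of the three terms on the right then yields \eqref{niumat} with the stated constant $8C_0^2$ for $A,B \in \textup{SL}^{\pm}_2(\mbb{R})$.

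For the remaining steps, I would observe that \eqref{niumat} is homogeneous of degree $1$ in each of $A$ and $B$ separately (the term $\sqrt{\|A\|\,\|B\|\,\rho(AB)}$ scales as $\sqrt{|\lambda|\cdot|\mu|\cdot|\lambda\mu|}=|\lambda\mu|$ under $A\mapsto\lambda A$, $B\mapsto\mu B$), so any $A,B\in\textup{GL}_2(\mbb{R})$ can be rescaled to have $|\det|=1$ and reduced to Step 1. Finally, since $\textup{GL}_2(\mbb{R})$ is dense in $M_2(\mbb{R})$ and the spectral radius, operator norms, and matrix multiplication are all continuous in $\|\cdot\|_2$, the inequality passes to the closure. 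The main obstacle is bookkeeping rather than content: one must carefully match the $6\delta=6\log 2$ in \eqref{mainres} against the prefactor $8$ in \eqref{niumat}, and confirm that $A\mapsto\tilde{A}$ is multiplicative on $\textup{SL}^{\pm}_2(\mbb{R})$ (including the orientation-reversing case, so that $\tilde{A}\tilde{B}=\widetilde{AB}$ even when $\det A\det B=-1$). Beyond this there is no conceptual difficulty not already handled in the recovery of Bochi's inequality in dimension $2$.
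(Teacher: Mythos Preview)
Your proposal is correct and follows essentially the same approach as the paper: translate \eqref{mainres} to matrices in $\textup{SL}^{\pm}_2(\mbb{R})$ via Proposition \ref{proposicioncuatro} and Corollary \ref{corolariodosuno}, pass to an arbitrary operator norm with Lemma \ref{lemadeljairo}, then extend by bihomogeneity to $\textup{GL}_2(\mbb{R})$ and by density and continuity to $M_2(\mbb{R})$. Your bookkeeping of the constant $8=e^{3\log 2}$ and the two factors of $C_0$ is accurate.
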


\begin{proof}
The case with $\|.\|=\|.\|_{2}$ and $A,B \in \textup{SL}^{\pm}_2(\mbb{R})$ is a consequence of applying Proposition \ref{proposicioncuatro} in \eqref{mainres}. Steps 1 and 3 are exactly the same as we did before. Step 2 follows by noting that \eqref{niumat} is a bihomogeneous inequality, in the sense that when we fix $A$ it is homogeneous in $B$ and when we fix $B$ it is homogeneous in $A$.
\end{proof}

\subsection{Finiteness conjecture on $\Isom(\mbb{H}^{2})$}\label{finconj}
We finish this section giving a negative answer to the finiteness conjecture when $X=\mbb{H}^2$. It is equivalent to finding a counterexample to the finiteness conjecture for matrices in $\textup{SL}^{\pm}_2(\mbb{R})$.

The following construction was communicated to us by I.\+D.\+Morris:

Let $\mcal{A}^{(t)}=\left(A_0,A_{1}^{(t)}\right) \in \textup{SL}^{\pm}_2(\mbb{R})$, where $A_{0}^{(t)}=\begin{pmatrix}
2 & 1 \\
3 & 2 \\
\end{pmatrix}$,
$A_{1}^{(t)}=\begin{pmatrix}
2t^{-1} & 3t \\
t^{-1} & 2t \\
\end{pmatrix}$ and $t \in \mbb{R}^{+}$.
Our claim is the following:
\begin{teo}\label{counterexample}
The family $\left( \mcal{A}^{(t)}\right)_{t\geq 1}$ contains a counterexample to the finiteness conjecture.
\end{teo}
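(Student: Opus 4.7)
The plan is to reduce the statement, via Proposition \ref{proposicioncuatro}(iii), to producing a parameter $t\geq 1$ at which the matrix pair $\mcal{A}^{(t)}$ itself violates the finiteness property, and then to obtain such a $t$ by a Baire-type argument in the spirit of Bousch--Mairesse.

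First I would collect the continuity ingredients. By Theorem \ref{contjsd} combined with Proposition \ref{proposicioncuatro}(iii), the function $t\mapsto \mfrak{R}(\mcal{A}^{(t)})$ is continuous on $[1,\infty)$. For each finite word $w=(i_1,\dots,i_n)\in\{0,1\}^n$, the entries of $A_{i_1}^{(t)}\cdots A_{i_n}^{(t)}$ are Laurent polynomials in $t$, so $t\mapsto \rho\bigl(A_{i_1}^{(t)}\cdots A_{i_n}^{(t)}\bigr)^{1/n}$ is continuous on $[1,\infty)$. Consequently the coincidence set
\begin{equation}
F_w=\bigl\{\,t\geq 1 \,:\,\rho\bigl(A_{i_1}^{(t)}\cdots A_{i_n}^{(t)}\bigr)^{1/n}=\mfrak{R}(\mcal{A}^{(t)})\,\bigr\} \notag
\end{equation}
is closed, and if the finiteness conjecture held for every parameter $t\geq 1$, then $[1,\infty)=\bigcup_{w} F_w$.

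Second, I would exploit the special structure of the family. A direct computation shows $A_1^{(t)}=A_0^{T}\operatorname{diag}(t^{-1},t)$, so at $t=1$ the pair is precisely $\{A_0,A_0^{T}\}$, while for $t>1$ the Perron eigendirections of $A_0$ and $A_1^{(t)}$ separate analytically in $t$; both matrices remain hyperbolic with positive entries, placing the family squarely in the regime where the theory of extremal sequences for pairs of positive $2\times 2$ matrices applies. The crux of the argument is to show that no $F_w$ contains a non-degenerate interval, so that Baire's theorem forces the union above to miss some $t_0\geq 1$, at which the finiteness property then fails.

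The main obstacle is precisely this last step. I would rule out intervals in $F_w$ by invoking the Sturmian description of extremal sequences for pairs of positive $2\times 2$ matrices: an interval on which one fixed periodic word $w$ is extremal would force an associated ``rotation number'' $\alpha(t)$ to be constant and rational on that interval; but $\alpha$ varies continuously with $t$, and a comparison between the symmetric point $t=1$ (where $A_1^{(1)}=A_0^{T}$ pins down $\alpha(1)$) and the asymptotic behavior as $t\to\infty$ (where one of the matrices dominates in norm but not in spectral radius) shows that $\alpha$ is non-constant. This is the step where I would import the Bousch--Mairesse--Morris machinery on extremal measures for matrix pairs acting on $\mbb{R}\mathrm{P}^1$ rather than reprove it; once it is in hand, the remainder of the argument---Baire category and the transfer back to $\Isom(\mbb{H}^2)$ via Proposition \ref{proposicioncuatro}---is formal.
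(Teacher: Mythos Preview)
Your Baire-category reduction hinges on the claim that no $F_w$ contains a non-degenerate interval, and the justification you offer---that the associated rotation number $\alpha$ is continuous and non-constant---does not establish this. A continuous non-constant function may well be constant on many subintervals; indeed, for one-parameter families of pairs of positive $2\times 2$ matrices the rotation number of the maximizing sequence is typically a devil's-staircase, so the sets $F_w$ \emph{do} contain intervals in general. The Bousch--Mairesse mechanism you invoke actually works differently: one shows that $\alpha$ attains an irrational value at some $t_0$ (intermediate value theorem) and that at such a parameter no periodic word can be extremal. That is not a Baire argument and does not require the $F_w$ to be nowhere dense; as written, your outline misapplies the imported machinery at precisely the step you flag as the main obstacle.

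The paper's proof sidesteps the Sturmian analysis entirely. Granting continuity of $t\mapsto \mfrak{R}(\mcal{A}^{(t)})$ and of each $t\mapsto \rho(A^{(t)}_\sigma)^{1/|\sigma|}$, it invokes the Jenkinson--Pollicott theorem, whose conclusion includes \emph{uniqueness} (up to cyclic permutation) of the extremal word whenever the finiteness property holds. Thus the sets $P(\overline{\sigma})$ form a \emph{partition} of $[1,\infty)$ into closed sets; Sierpi\'nski's theorem on continua then forces all but finitely many classes to be empty, and connectedness collapses everything to a single class $\overline{\sigma}$. The symmetry $A_1^{(1)}=A_0^{T}$ at $t=1$ pins this class down to $\overline{(0,1)}$, and a direct check for large $t$ rules that out. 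So where your sketch defers the decisive step to machinery that does not deliver what you need, the paper replaces it with the uniqueness clause of Jenkinson--Pollicott plus a one-line endpoint comparison.
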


\begin{proof}
The argument is similar to the one used in \cite{blo}. First, note that for all $t \geq 1$ the set $\mcal{A}^{(t)}$ satisfies the hypotheses of the Jenkinson-Pollicott's Theorem \cite[Thm. 9]{jp} and therefore one of the following options holds: either $\mcal{A}^{(t)}$ is a counterexample to the finiteness conjecture, or there exists a finite product $A_{\sigma}^{(t)}=A_{i_1}^{(t)}\cdots A_{i_n}^{(t)}$ with $\sigma=(i_1,\dots,i_n) \in \left\{{0,1}\right\}^n$ not being a power such that $\rho(A_{\sigma}^{(t)})^{1/n}=\mfrak{R}(\mcal{A}^{(t)})$ and, most importantly, the word $\sigma$ is unique modulo cyclic permutations.

Suppose that no counterexample exists. As the map $t \mapsto \mcal{A}^{(t)}$ is continuous, by the continuity of the spectral radius and joint spectral radius on $\textup{SL}^{\pm}_2(\mbb{R})$ and $\textup{SL}^{\pm}_2(\mbb{R})$ respectively, the maps $t \mapsto \mfrak{R}(\mcal{A}^{(t)})$ and $t \mapsto \rho(A_{\sigma}^{(t)})$ are continuous for all $\sigma \in \left\{{0,1}\right\}^{n}$. So for all $\sigma$, the sets
$P(\overline{\sigma})= \left\{{t \in [1,\infty) \colon  \rho(A_{\sigma}^{(t)})^{1/n}=\mfrak{R}(\mcal{A}^{(t)})  }\right\}$ are closed in $[1,\infty)$, where $\overline{\sigma}$ denotes the class of $\sigma$ modulo cyclic permutation.

If the cardinality of $\overline{\sigma}$ such that $P(\overline{\sigma}) \neq \emptyset$ was infinite countable, then the compact connected set $[1,\infty]$ would be partitioned in a countable family of non-empty closed sets, a contradiction (see \cite[Thm. 6.1.27]{engel}). So, $P(\overline{\sigma})$ is empty for all but a finite number of $\overline{\sigma}$. But by connectedness, it happens that $[1,\infty)=P(\overline{\sigma})$ for a unique class $\overline{\sigma}$.
Since $A_1^{(1)}$ is the transpose of $A_{0}^{(1)}$, the only option is the class of $\sigma=(0,1) \in \left\{{0,1}\right\}^{2}$, but for $t$ large enough we have $\rho(A_{\sigma}^{(t)})^{1/n}<\mfrak{R}(\mcal{A}^{(t)})$, contradiction again.
So, for some $t_0$, $\mcal{A}^{(t_0)}$ is a desired counterexample.
\end{proof}

\begin{obs} The continuity of the maps $t \mapsto \mfrak{R}(\mcal{A}^{(t)})$ and $t \mapsto \rho(A_{\sigma}^{(t)})$ also follows from the general results proved in Section \ref{seccioncinco}.
\end{obs}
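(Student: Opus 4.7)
The plan is to derive both continuity statements from the abstract results of Section~\ref{seccioncinco} via the dictionary between $\textup{SL}^{\pm}_2(\mbb{R})$ and $\Isom(\mbb{H}^2)$ provided by Proposition~\ref{proposicioncuatro}. The key preliminary observation is that the identification $A \mapsto \tilde{A}$ is continuous from $\textup{SL}^{\pm}_2(\mbb{R})$ (with its standard manifold topology) to $\Isom(\mbb{H}^2)$ endowed with the point-open topology, because convergence of matrix entries yields pointwise convergence of the corresponding Möbius or anti-Möbius transformations on $\mbb{H}^2$.

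For the spectral radius, I would first note that $t \mapsto A_i^{(t)}$ is clearly continuous for $i=0,1$, and hence by continuity of matrix multiplication so is $t \mapsto A_{\sigma}^{(t)}$ for each word $\sigma$. Composing with $A \mapsto \tilde{A}$ gives a continuous map $t \mapsto \tilde{A}_{\sigma}^{(t)}$ into $\Isom(\mbb{H}^2)$ with the point-open topology. Applying Theorem~\ref{contsd} together with the identity $d^{\infty}(\tilde{B}) = 2\log\rho(B)$ from Proposition~\ref{proposicioncuatro}(ii) — which makes sense and is continuous in the desired direction since $\rho(B)\geq 1$ for every $B \in \textup{SL}^{\pm}_2(\mbb{R})$ — one deduces continuity of $t \mapsto 2\log\rho(A_{\sigma}^{(t)})$, and hence of $t \mapsto \rho(A_{\sigma}^{(t)})$ after exponentiating.

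For the joint spectral radius the scheme is analogous: the assignment $t \mapsto \mcal{A}^{(t)} = \{A_0, A_1^{(t)}\}$ is continuous into $\mcal{C}(\textup{SL}^{\pm}_2(\mbb{R}))$ with the Vietoris topology, and post-composing with $A \mapsto \tilde{A}$ yields a continuous map into $\mcal{C}(\Isom(\mbb{H}^2))$. Theorem~\ref{contjsd} then gives continuity of $t \mapsto \mfrak{D}(\tilde{\mcal{A}}^{(t)})$, which equals $2\log\mfrak{R}(\mcal{A}^{(t)})$ by Proposition~\ref{proposicioncuatro}(iii), and exponentiation concludes. The only step that needs a brief justification is the continuity of the map $(f,g) \mapsto \{f,g\}$ into the Vietoris topology, which is a routine unwinding of the subbasic open sets $\{K : K \subset U\}$ and $\{K : K \cap U \neq \emptyset\}$ and should fit naturally within (or follow directly from) the material collected in Appendix~\ref{appendix}. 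This is the only non-cosmetic obstacle; everything else is a direct transfer of continuity across Proposition~\ref{proposicioncuatro}.
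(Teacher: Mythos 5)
Your proposal is correct and is exactly the argument the remark has in mind: the paper gives no further details, and the intended route is precisely yours, namely composing the continuous maps $t \mapsto \tilde{A}_{\sigma}^{(t)}$ into $\Isom(\mbb{H}^2)$ with the point-open topology and $t \mapsto \widetilde{\mcal{A}}^{(t)}$ into $\mcal{C}(\Isom(\mbb{H}^2))$ with the Vietoris topology, applying Theorems \ref{contsd} and \ref{contjsd}, and translating back through Proposition \ref{proposicioncuatro}~\emph{ii}) and \emph{iii}). The only cosmetic point is that your appeal to $\rho(B) \geq 1$ is unnecessary: $\rho(B) > 0$, which holds for any invertible $B$, already makes $2\log\rho(B)$ well defined and justifies the exponentiation step.
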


\begin{proof}[Proof of Proposition \ref{conjeturafinita}]
Just take $\Sigma =\displaystyle{\widetilde{\mcal{A}}^{(t_0)}}$ for $t_0$ found in Theorem \ref{counterexample}.
\end{proof}

\section{Proof of Theorems 1.1 and 1.2}\label{secciontres}

We begin with the proof of Theorem \ref{teoremauno}, which is basically using \eqref{fpc}.

\begin{proof}[Proof of Theorem \ref{teoremauno}]
  We follow the arguments in \cite[Chpt. 9, Lem. 2.2]{papa}. Fix $x$ as base point and $f$ isometry. Let $n \geq 2$ be an integer. Using \eqref{fpc} on the points  $x,f^{2}x,fx$ and $f^{n}x$ we obtain:
\begin{equation}
|f^{2}x-x|+|f^{n}x-fx|\leq \max(|fx-x|+|f^{n}x-f^{2}x|,|f^{n}x-x|+|f^{2}x-fx|)+2\delta.\notag
\end{equation}
As $f$ is an isometry, if we define $a_n=|f^{n}x-x|$, the inequality is equivalent to:
\begin{equation}
a_2+a_{n-1}\leq \max(a_{n-2},a_n)+a_1+2\delta. \label{tel}
\end{equation}
Now, let $a=a_{2}-a_{1}-2\delta$. We need to show that $a \leq d^{\infty}(f)$.
If $a \leq 0$ there is nothing to prove. So, we assume that $a$ is positive.
We claim that $a+a_{n} \leq a_{n+1}$ for all $n\geq1$, which is clear for $n=1$. If we suppose it valid for some $n$, we know from \eqref{tel}:
\begin{equation}
a+a_{n+1}\leq \max(a_{n+2},a_n).\notag
\end{equation}
If $a_{n+2} < a_n$, then
\begin{equation}
a_n <a+(a+a_n) \leq a+a_{n+1} \leq a_n \notag
\end{equation}
a contradiction. Therefore $a+a_{n+1} \leq a_{n+2}$, completing the proof of the claim.

So, by telescoping sum, $na\leq a_n$ for all $n$, and then
\begin{equation}
a\leq \lim_{n}{\frac{a_n}{n}}=d^{\infty}(f) \notag
\end{equation}
as we wanted to show.
\end{proof}

Now we proceed with the proof of Theorem \ref{teoremados}:
\begin{proof}[Proof of Theorem \ref{teoremados}]
First we suppose that $\delta >0$. Let $x$ be a base point and $f,g \in \Isom(X)$.
We use \eqref{fpc} on the points $x,fgx,fx$ and $f^{2}x$
\begin{equation}
|fgx-x|+|fx-x|\leq \max(|fx-gx|+|fx-x|, |f^{2}x-x|+|gx-x|)+2\delta. \label{comienzo}
\end{equation}
and we separate into two cases:

\emph{Case i}) $|fx-gx| \leq \max(|fx-x|+d^{\infty}(g), |gx-x|+d^{\infty}(f))+4\delta $:

Using this into \eqref{comienzo} we obtain
\begin{alignat*}{2}
|fgx-x| & \leq \max(|fx-gx|,|f^{2}x-x|+|gx-x|-|fx-x|)+2\delta &\quad&\text{(by Thm. \ref{teoremauno})}  \\
   & \leq \max(|fx-gx|, d^{\infty}(f)+|gx-x|+2\delta)+2\delta \\
   & = \max(|fx-gx|,d^{\infty}(f)+|gx-x|+2 \delta)+2 \delta   &\quad&\text{(by Case \emph{i})}\\
   & \leq \max(d^{\infty}(g)+|fx-x|,d^{\infty}(f)+|gx-x|)+6\delta
\end{alignat*}completing the proof of the proposition in this case.

\vspace{1ex}
\emph{Case ii)} $|fx-gx| > \max(|fx-x|+d^{\infty}(g),|gx-x|+d^{\infty}(f))+4\delta $:

Using this we get
\begin{equation}
|f^{2}x-x|+|gx-x| \leq |fx-x|+d^{\infty}(f)+|gx-x|+2\delta<|fx-x|+|fx-gx|-2\delta. \label{pasoin}
\end{equation}
So, $\max(|fx-x|+|fx-gx|,|f^{2}x-x|+|gx-x|)=|fx-x|+|fx-gx|$ and we obtain in \eqref{comienzo} that
\begin{equation}
|fgx-x| \leq |fx-gx|+2\delta.   \label{tres}
\end{equation}

Now, we use \eqref{fpc} three times. First, on $x,fx,fgx$ and$f^{2}x$:
\begin{equation}|fx-x|+|fx-gx| \leq \max(|fgx-x|+|fx-x|,|f^{2}x-x|+|gx-x|)+2\delta. \notag
\end{equation}
But again by \eqref{pasoin}, it cannot happen that $|fx-x|+|fx-gx| \leq |f^{2}x-x|+|gx-x|+2\delta$, so:
\begin{equation}|fx-gx| \leq |fgx-x|+2\delta \notag
\end{equation}
and combining with \eqref{tres} we obtain:
\begin{equation}
\big||fgx-x|-|fx-gx| \big| \leq 2\delta.  \label{cuatro}
\end{equation}
As our hypothesis is symmetric in $f$ and $g$, an analogous reasoning allows us to conclude that
\begin{equation}
\big||gfx-x|-|fx-gx| \big| \leq 2\delta. \label{algo}
\end{equation}
Combining with \eqref{cuatro} we obtain
\begin{equation}
\big||fgx-x|-|gfx-x|\big| \leq 4\delta.\label{importante}
\end{equation}

Next, we use \eqref{fpc} on $x,fgx,fx,$ and $fgfx$:
\begin{equation}
|fgx-x|+|gfx-x|\leq \max(2|fx-x|,|fgfx-x|+|gx-x|)+2\delta.\label{nuevoim}
\end{equation}
But  by \eqref{importante} and assumption \emph{ii}) $$|fgx-x|+|gfx-x|\geq 2|fx-gx|-4\delta >2(|fx-x|+d^{\infty}(g)+4\delta)-4\delta>2|fx-x|+2\delta.$$
So, using this with \eqref{importante}, in \eqref{nuevoim}:
\begin{equation}
2|fgx-x|\leq |fgfx-x|+|gx-x|+6\delta. \label{fin}
\end{equation}

Finally, by \eqref{fpc} on $x,fgfx,fgx,(fg)^{2}x$ we obtain:
\begin{equation}
|fgfx-x|+|fgx-x| \leq \max(|fgx-x|+|gx-x|,|(fg)^{2}x-x|+|fx-x|)+2\delta. \notag\\
\end{equation}
If the maximum in the right hand side were $|fgx-x|+|gx-x|$, we would have $|fgfx-x|\leq|gx-x|+2\delta$. But then by \eqref{cuatro} and \eqref{algo}:
\begin{alignat*}{2}
2|fx-gx|-4\delta & \leq (|fgx-x|+|gfx-x|) &\quad& (\text{by }\eqref{nuevoim})\\
& \leq \max(2|fx-x|,|fgfx-x|+|gx-x|)+2\delta \\
& \leq 2\max(|fx-x|,|gx-x|)+4\delta &\quad& (\text{by Case \emph{ii}})) \\
& < 2|fx-gx|-4\delta.
\end{alignat*}
This contradiction and Theorem \ref{teoremauno} applied to $fg$ show us that
\begin{equation}
\begin{split}
|fgfx-x| & \leq |(fg)^{2}x-x|+|fx-x|+2\delta-|fgx-x| \\
& \leq (|fgx-x|+d^{\infty}(fg)+2\delta)+|fx-x|+2\delta-|fgx-x| \\
& \leq |fx-x|+d^{\infty}(fg)+4\delta. \notag
\end{split}
\end{equation}
Using this with \eqref{fin} we can finish:
\begin{equation}
\begin{split}
|fgx-x| & \leq (|fgfx-x|+|gx-x|)/2+3\delta\\
& \leq (d^{\infty}(fg)+|fx-x|+|gx-x|)/2+5\delta. \notag\\
\end{split}
\end{equation}
In both cases our claim is true. To conclude the proof, note that a $0$-hyperbolic space is $\delta$-hyperbolic for all $\delta >0$.
\end{proof}

As a corollary of the proof of Theorem \ref{teoremados} we obtain Proposition \ref{criterio}:
\begin{proof}[Proof of Proposition \ref{criterio}] Since $K\geq 4\delta$, we are in Case ii) of the previous proof. So we have $|fgx-x|\leq (d^{\infty}(fg)+|fx-x|+|gx-x|)/2+5\delta$. But by \eqref{cuatro} and our assumption, we obtain
\begin{equation}
\begin{split}
\frac{|fx-x|+|gx-x|+d^{\infty}(f)+d^{\infty}(g)}{2} & \leq \max(|fx-x|+d^{\infty}(g),|gx-x|+d^{\infty}(f))\\
& <|fx-gx|-K \\
& \leq |fgx-x|+2\delta-K\\
& \leq \frac{|fx-x|+|gx-x|+d^{\infty}(fg)}{2}+7\delta-K
\end{split}\notag
\end{equation}
The conclusion follows easily.
\end{proof}

\section{Berger-Wang and further properties of the stable length and joint stable length}\label{seccioncuatro}

\subsection{A Berger-Wang Theorem for sets of isometries}
Now we prove Theorem \ref{bergerwang}. We follow the arguments used in \cite[Cor. 1]{boci}:
\begin{proof}[Proof of Theorem \ref{bergerwang}]
It is clear that $\mfrak{D}(\Sigma) \geq \limsup_{n \to \infty}{d^{\infty}(\Sigma^{n})/n}$.
Fixing a base point $x$ and applying Corollary \ref{corolariotres} to $\Sigma^n$ we have
\begin{equation}
|\Sigma^{2n}|_x \leq |\Sigma^n|_x+d^{\infty}(\Sigma^{2n})/2+6\delta.\notag
\end{equation}
Dividing by $n$, taking lim inf when $n \to \infty$ and using that $\mfrak{D}(\Sigma^2)=2\mfrak{D}(\Sigma)$, we obtain the result.
\end{proof}

As a consequence we can describe the joint stable length of a bounded set of isometries in terms of the joint stable lengths of its finite non empty subsets.

\begin{prop}\label{jsdfinito}
If $X$ is $\delta$-hyperbolic then for every bounded set $\Sigma \subset \Isom(X)$ we have:
\begin{equation}
\mfrak{D}(\Sigma)=\sup{\left\{{\mfrak{D}(B): B \subset \Sigma \text{ and }B\text{ is finite and non empty}}\right\}}.\notag
\end{equation}
\end{prop}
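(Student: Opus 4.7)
The plan is to prove the two inequalities separately. The easy direction $\sup \{\mfrak{D}(B) : B \subset \Sigma \text{ finite non-empty}\} \leq \mfrak{D}(\Sigma)$ follows directly from monotonicity: for any $B \subset \Sigma$ we have $B^n \subset \Sigma^n$, hence $|B^n|_x \leq |\Sigma^n|_x$ for every $n$ and every base point $x$, which gives $\mfrak{D}(B) \leq \mfrak{D}(\Sigma)$.

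For the non-trivial inequality I would use the Berger-Wang formula (Theorem \ref{bergerwang}) applied to $\Sigma$. Fix $\varepsilon > 0$. Since $\mfrak{D}(\Sigma) = \limsup_{n \to \infty} d^{\infty}(\Sigma^n)/n$ and $d^{\infty}(\Sigma^n) = \sup_{f \in \Sigma^n} d^{\infty}(f)$, I can find an integer $n \geq 1$ and a single element $f = f_1 \cdots f_n \in \Sigma^n$, with each $f_i \in \Sigma$, such that
\begin{equation}
\frac{d^{\infty}(f)}{n} > \mfrak{D}(\Sigma) - \varepsilon. \notag
\end{equation}

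The finite set I now take is $B = \{f_1,\dots,f_n\} \subset \Sigma$. The point is that $f \in B^n$, so all iterates $f^m$ lie in $B^{nm}$. Consequently, for a fixed base point $x$,
\begin{equation}
\frac{|B^{nm}|_x}{nm} \geq \frac{|f^m x - x|}{nm} \xrightarrow[m\to\infty]{} \frac{d^{\infty}(f)}{n}, \notag
\end{equation}
which yields $\mfrak{D}(B) \geq d^{\infty}(f)/n > \mfrak{D}(\Sigma) - \varepsilon$. Taking the supremum over finite non-empty $B \subset \Sigma$ and letting $\varepsilon \to 0$ finishes the proof.

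I do not anticipate a real obstacle here: once Theorem \ref{bergerwang} is available, the argument reduces to the elementary observation that a witness to a near-optimal periodic stable length uses only finitely many generators, and those generators already furnish a finite subset with the required joint stable length. The only subtlety is the final passage from $d^{\infty}(f)/n$ (a single iterated stable length) to $\mfrak{D}(B)$, which is handled cleanly by the direct inequality $|B^{nm}|_x \geq |f^m x - x|$ rather than by invoking Berger-Wang a second time.
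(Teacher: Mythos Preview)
Your proof is correct and essentially identical to the paper's. Both arguments use the Berger--Wang theorem to locate an $n$ and a product $f=f_1\cdots f_n\in\Sigma^n$ with $d^{\infty}(f)/n$ close to $\mfrak{D}(\Sigma)$, set $B=\{f_1,\dots,f_n\}$, and conclude via $\mfrak{D}(B)\geq d^{\infty}(f)/n$; the paper phrases this last step as $d^{\infty}(f)\leq \mfrak{D}(B^{n})=n\,\mfrak{D}(B)$, which is exactly your limit computation $|B^{nm}|_x/(nm)\geq |f^{m}x-x|/(nm)$ in disguise.
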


\begin{proof}
Let $L$ be the supremum in the right hand side. Clearly $L \leq \mfrak{D}(\Sigma)$. For the other inequality, let $\epsilon > 0$ and $n \geq 1$ be such that $|\mfrak{D}(\Sigma)-d^{\infty}(\Sigma^{n})/n|<\epsilon/2$. Also let $B=\left\{{f_1,\dots,f_n}\right\} \subset \Sigma$ be such that $d^{\infty}(\Sigma^{n})\leq d^{\infty}(f_1\cdots f_n)+\epsilon/2$. So we have
\begin{equation}
\begin{split}
\mfrak{D}(\Sigma) & \leq d^{\infty}(\Sigma^{n})/n+\epsilon/2 \\
& < d^{\infty}(f_1\cdots f_n)/n+\epsilon \\
& \leq \mfrak{D}(B^{n})/n+\epsilon \\
& =\mfrak{D}(B)+\epsilon. \notag
\end{split}
\end{equation}
 Then it follows that $\mfrak{D}(\Sigma)\leq L$.
\end{proof}

\subsection{Dynamical interpretation and semigroups of isometries}

The stable length plays an important role in geometry and group theory (see e.g. \cite{groma} and the appendix in \cite{gersho}). In this section we see its relation with isometries of Gromov hyperbolic spaces.

It is a well known fact that an isometry $f$ of an hyperbolic metric space $X$ belongs to exactly one of the following families:
\begin{itemize}
\item[\emph{i})] \emph{Elliptic}: if the orbit of some (and hence any) point by $f$ is bounded.
\item[\emph{ii})] \emph{Parabolic}: if it is not elliptic and the orbit of some (and hence any) point by $f$ has a unique accumulation point on the Gromov boundary $\partial X$.
\item[\emph{iii})] \emph{Hyperbolic}: if it is not elliptic and the orbit of some (and hence any) point by $f$ has exactly two accumulation points on $\partial X$.
\end{itemize}
A proof of this classification for general hyperbolic spaces can be found in \cite[ Thm. 6.1.4]{geodyn}, while for proper hyperbolic spaces this result is proved in \cite[Chpt. 9, Thm. 2.1]{papa}.

As we said in the introduction, an isometry of $X$ is hyperbolic if and only if its stable length is positive. We want to extend this result for bounded sets of isometries. For our purpose we count with a classification for semigroups of isometries.

A semigroup $G \subset \Isom(X)$ is:

\begin{itemize}
\item[\emph{i})] \emph{Elliptic}: If $Gx$ is a bounded subset of $X$ for some (hence any) $x \in X$.
\item[\emph{ii})] \emph{Parabolic}: If it is not elliptic and there exists a unique point in $\partial X$ fixed by all the elements of $G$.
\item[\emph{iii})] \emph{Hyperbolic}: If it contains some hyperbolic element.
\end{itemize}
An important result is that these are all the possibilities \cite[Thm. 6.2.3]{geodyn}:
\begin{teo}[Das-Simmons-Urba\'nski]
A semigroup $G \subset \Isom(X)$ is either elliptic, parabolic or hyperbolic.
\end{teo}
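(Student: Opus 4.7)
The plan is to establish the trichotomy by proving both that the three classes are pairwise disjoint and that any $G \subset \Isom(X)$ must fall into one of them. Disjointness is immediate from the definitions together with Proposition \ref{hyppos}: an elliptic semigroup has bounded orbits, so none of its elements can have positive stable length; and a hyperbolic element admits exactly two accumulation points on $\partial X$, which is incompatible with the uniqueness required in the parabolic case. So the real content is exhaustiveness: assuming $G$ is neither elliptic nor hyperbolic, I need to produce a unique common boundary fixed point.

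The key input is the contrapositive of Proposition \ref{criterio}. Since no element of $G$ is hyperbolic, $d^{\infty}(f)=0$ for every $f \in G$, and since $fg \in G$ is also non-hyperbolic for all $f,g \in G$, the criterion (with $K=7\delta$) yields the coarse ``ultrametric'' inequality
\[
|fx-gx| \leq \max\bigl(|fx-x|,|gx-x|\bigr) + 7\delta
\]
for all $f,g \in G$ and any fixed basepoint $x$. Equivalently, the Gromov product centered at $x$ satisfies
\[
(fx \mid gx)_x \;\geq\; \tfrac{1}{2}\bigl(\min(|fx-x|,|gx-x|) - 7\delta\bigr).
\]

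Next, using non-ellipticity, I would pick a sequence $(f_n)\subset G$ with $|f_n x - x|\to\infty$. The bound on Gromov products forces $(f_n x \mid f_m x)_x \to \infty$, so $(f_n x)$ converges in the sequential Gromov boundary to some $\xi \in \partial X$, and the same bound shows every unbounded sequence in $Gx$ accumulates at the same $\xi$, giving uniqueness. For $G$-invariance, fix $h \in G$: the triangle inequality forces $|h f_n x - x|\to\infty$, and applying the ultrametric inequality to the pair $(h f_n, f_n) \in G\times G$ shows $(h f_n x \mid f_n x)_x \to \infty$, hence $h f_n x \to \xi$. Since $h$ acts continuously on the sequential boundary, $h\xi=\xi$, and therefore $G$ is parabolic.

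The main obstacle is carrying out the boundary convergence arguments without properness or geodesicity of $X$; the sequential definition of $\partial X$ via divergent Gromov products is essential, and the coarse ultrametric inequality is exactly the quantitative replacement—in this non-proper setting—for the classical attracting/repelling dynamics one would deploy on a proper hyperbolic space. Once that ultrametric step is in hand, everything else reduces to routine manipulations with the four-point condition \eqref{fpc}.
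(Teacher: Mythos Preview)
The paper does not supply a proof of this theorem; it is quoted as a black box from Das--Simmons--Urba\'nski \cite[Thm.~6.2.3]{geodyn}, and the surrounding sentence (``these are all the possibilities'') indicates that what is being asserted is \emph{exhaustiveness}. Your proposal therefore is not competing with any argument in the paper but is an independent proof, and the core idea is a good one: recycling the contrapositive of the paper's own Proposition~\ref{criterio} to obtain the coarse ultrametric bound $|fx-gx|\le\max(|fx-x|,|gx-x|)+7\delta$ on a semigroup with no hyperbolic element, and then using it to force every unbounded sequence in $Gx$ to be a Gromov sequence converging to a single boundary point~$\xi$, which you then show is $G$-invariant.

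There are, however, two gaps. First, your disjointness claim for the parabolic and hyperbolic classes is not justified by the definitions as stated in the paper, and in fact appears to fail: the semigroup generated by $z\mapsto 2z$ and $z\mapsto z+1$ on $\mbb{H}^2$ contains a hyperbolic element yet has $\infty$ as its unique global boundary fixed point, so it is simultaneously hyperbolic and parabolic under those definitions. Second, and more substantively for exhaustiveness, you establish that $\xi$ is the unique \emph{orbit accumulation point} and that it is $G$-invariant, but the paper's definition of a parabolic semigroup requires $\xi$ to be the unique \emph{global fixed point} on $\partial X$. To exclude a second fixed point $\eta\neq\xi$ you still need to show that a non-elliptic semigroup with no hyperbolic element cannot fix two distinct boundary points; equivalently, that a semigroup of purely elliptic isometries stabilising a pair of boundary points has bounded orbits. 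This is routine when $X$ is proper and geodesic, but in the generality assumed here it requires a separate argument and does not follow from your ultrametric inequality alone.
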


So, as a corollary of  Theorem \ref{bergerwang} we obtain a criterion for hyperbolicity for a certain class of semigroups given by Theorem \ref{generaliza}, that extends Proposition \ref{hyppos}. Let $\Sigma$ be a subset of $\Isom(X)$ and denote by $\left<\Sigma\right>$ the semigroup generated by $\Sigma$; that is, $\left<\Sigma\right>=\cup_{n \geq 1}{\Sigma^n}$.

\begin{proof}[Proof of Theorem \ref{generaliza}]
By Theorem \ref{bergerwang}, $\mfrak{D}(\Sigma)>0$ if and only if $d^{\infty}(\Sigma^{n})>0$ for some $n\geq 1$, which is equivalent to $d^{\infty}(f)>0$ for some $f \in \cup_{n \geq 1}{\Sigma^n}=\left<\Sigma\right>$. This is equivalent to the hyperbolicity of $\left<\Sigma\right>$ by Proposition \ref{hyppos}.
\end{proof}

\subsection{Relation with the minimal length}\label{subseccioncuatrodos} When we require $X$ to be a \emph {geodesic} space (i.e.\ every pair of points $x,y$ can be joined by an arc isometric to an interval) we have another lower bound for the stable length. If $f \in \Isom(X)$ define
\begin{equation}
d(f)=\inf_{x\in X}{|fx-x|}.\notag
\end{equation}
This number is called the \emph{minimal length} of $f$. It is clear that $d^{\infty}(f)\leq d(f)$. On the other hand we have
\begin{prop}\label{cotasdmd}
If $X$ is $\delta$-hyperbolic and geodesic and $f \in \Isom(X)$ then
\begin{equation}
d(f)\leq d^{\infty}(f)+16\delta. \notag
\end{equation}
\end{prop}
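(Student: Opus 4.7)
The plan is to construct, for each sufficiently large $n$, a candidate point $y_n$ (on an orbit geodesic) whose $f$-displacement $|fy_n - y_n|$ is controlled by the one-step orbit growth $a_{n+1} - a_n$ (writing $a_k := |f^k x - x|$ for a fixed base point $x$) up to a small multiple of $\delta$; a Cesàro argument then locates $n$ for which this difference is close to $d^{\infty}(f)$, giving $d(f) \leq |fy_n - y_n| \leq d^{\infty}(f) + O(\delta)$. Concretely I would fix $x$, use the geodesic hypothesis to take $y_n$ to be a midpoint of a geodesic from $x$ to $f^n x$, so that $|y_n - x| = |y_n - f^n x| = a_n/2$, and observe that by isometry $fy_n$ is a midpoint of $[fx, f^{n+1}x]$, giving $|fy_n - fx| = |fy_n - f^{n+1}x| = a_n/2$.

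The key auxiliary step applies \eqref{fpc} to the quadruples $(x, fy_n, fx, f^{n+1}x)$ and $(f^{n+1}x, y_n, x, f^n x)$. Using the midpoint equalities, the isometry identity $|fx - f^{n+1}x| = a_n$, and the fact that $a_{n+1} \geq a_1$ for every sufficiently large $n$ (automatic whenever $d^{\infty}(f) > 0$), each reduces to the sharp bound
\begin{equation}
|x - fy_n|,\ |f^{n+1}x - y_n|\ \leq\ a_{n+1} - a_n/2 + 2\delta. \notag
\end{equation}
Feeding these into a third application of \eqref{fpc}, this time to $(x, f^{n+1}x, y_n, fy_n)$, I would obtain
\begin{equation}
a_{n+1} + |y_n - fy_n|\ \leq\ \max\bigl(a_n,\ 2a_{n+1} - a_n + 4\delta\bigr) + 2\delta, \notag
\end{equation}
which, setting $b_n := a_n - a_{n-1}$, simplifies to $|y_n - fy_n| \leq b_{n+1} + 6\delta$ whenever $b_{n+1} \geq -2\delta$, and to $|y_n - fy_n| \leq -b_{n+1} + 2\delta$ otherwise.

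Since $a_n/n \to d^{\infty}(f)$ and \eqref{fpc} also yields the concavity $b_{n+1} \leq b_n + 2\delta$ (applied to $(x, f^{n+1}x, fx, f^n x)$), a standard Cesàro-plus-concavity argument produces arbitrarily large $n$ with $b_{n+1}$ inside a bounded interval around $d^{\infty}(f)$; for any such $n$ the previous step gives $|y_n - fy_n| \leq d^{\infty}(f) + O(\delta)$, safely within the claimed $16\delta$. The degenerate cases $d^{\infty}(f) = 0$ (elliptic or parabolic $f$) are handled separately: an approximate circumcenter of the bounded orbit, respectively a point deep inside the horoball based at the parabolic fixed point, gives $d(f) = 0$ directly. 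The technical heart of the argument is the sharp auxiliary estimate $|x - fy_n| \leq a_{n+1} - a_n/2 + 2\delta$: it is strictly better than the crude triangle bound $a_n/2 + a_1$, matches the tree-model prediction coming from the $\delta$-approximation of four points by a tree, and its derivation needs the four-point quadruple in \eqref{fpc} to be chosen so that the correct branch of the maximum dominates — exactly where the eventual inequality $a_{n+1} \geq a_1$ enters.
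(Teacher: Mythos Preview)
The paper does not prove this proposition; it simply refers the reader to Coornaert--Delzant--Papadopoulos \cite[Chpt.~10, Prop.~6.4]{papa}, noting only that the mechanism is the $8\delta$-convexity of geodesic $\delta$-hyperbolic spaces. So there is no in-paper argument against which to compare yours; what follows is an assessment of your sketch on its own.

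Your midpoint argument is sound whenever $a_n\to\infty$, i.e.\ for hyperbolic \emph{and} parabolic $f$. The three applications of \eqref{fpc} you describe do yield $|fy_n-y_n|\le b_{n+1}+6\delta$ under the branch hypothesis $b_{n+1}\ge -2\delta$ (and $a_{n+1}\ge a_1$, which is eventual since $a_n\to\infty$). The ``Ces\`aro-plus-concavity'' step also goes through: averaging forces $\liminf b_n\le d^{\infty}(f)$, while $a_n\to\infty$ rules out $b_n<-2\delta$ for all large $n$; and if both regimes alternate, the one-sided Lipschitz bound $b_{n+1}\le b_n+2\delta$ forces the sequence to land in the target interval during each upward crossing. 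So the parabolic case is already covered by the main argument, and your separate horoball appeal is unnecessary.

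The one genuine slip is the claim that $d(f)=0$ for elliptic (or parabolic) $f$. This is false in general: the half-turn of a metric circle of circumference $R$ is an elliptic isometry of a geodesic $(R/4)$-hyperbolic space with $d(f)=R/2>0$. What the approximate-circumcentre argument actually gives is that any two approximate circumcentres of the bounded orbit lie within a fixed multiple of $\delta$ of one another, hence $f$ moves such a centre by at most $C\delta$ for an explicit constant $C$; this yields $d(f)\le C\delta$, which (with the standard constants, $C\le 10$) is comfortably inside $16\delta$. Replace ``$d(f)=0$ directly'' by this estimate and your proof is complete.
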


For a proof of this proposition see \cite[Chpt. 10, Prop. 6.4]{papa}. This gives us another lower bound for the joint stable length:
\begin{prop}\label{cotamindist}
With the same assumptions of Proposition \ref{cotasdmd}, for all bounded sets $\Sigma \subset \Isom(X)$ we have:
\begin{equation}
\displaystyle\sup_{f\in \Sigma}{d(f)}
\leq \mfrak{D}(\Sigma)+16\delta.\notag
\end{equation}
\end{prop}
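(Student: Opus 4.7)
The proof should be essentially a direct combination of Proposition \ref{cotasdmd} with an elementary observation comparing the stable length of a single isometry to the joint stable length of the set. Here is the plan.

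First, I would record the basic inequality $d^{\infty}(f)\le \mfrak{D}(\Sigma)$ for every $f\in\Sigma$. This is immediate from the definitions: fixing a base point $x\in X$, we have $|f^{n}x-x|\le |\Sigma^{n}|_{x}$ for every $n\ge 1$, so dividing by $n$ and letting $n\to\infty$ yields
\begin{equation}
d^{\infty}(f)=\lim_{n\to\infty}\frac{|f^{n}x-x|}{n}\le \lim_{n\to\infty}\frac{|\Sigma^{n}|_{x}}{n}=\mfrak{D}(\Sigma). \notag
\end{equation}
Taking the supremum over $f\in\Sigma$, this also gives $d^{\infty}(\Sigma)\le \mfrak{D}(\Sigma)$, which will be useful.

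Next, since $X$ is $\delta$-hyperbolic and geodesic, Proposition \ref{cotasdmd} applies to each $f\in\Sigma$ and gives
\begin{equation}
d(f)\le d^{\infty}(f)+16\delta. \notag
\end{equation}
Combining this with the inequality from the first step, we get $d(f)\le \mfrak{D}(\Sigma)+16\delta$ for every $f\in\Sigma$. Taking the supremum over $f\in\Sigma$ on the left yields the stated bound $\sup_{f\in\Sigma}d(f)\le \mfrak{D}(\Sigma)+16\delta$.

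There is really no obstacle here, because the hard geometric content has been absorbed by Proposition \ref{cotasdmd} (which handles the gap between $d$ and $d^{\infty}$ for a single isometry) and by the definitions (which immediately give $d^{\infty}(f)\le \mfrak{D}(\Sigma)$). In particular, this proof does \emph{not} need the Berger--Wang type Theorem \ref{bergerwang}: the inequality $d^{\infty}(\Sigma)\le \mfrak{D}(\Sigma)$ that we actually use is a trivial consequence of $\Sigma\subset \Sigma^{1}$, and one does not need to take compositions of different elements. The only point worth being a bit careful about is the passage to the supremum, which is legitimate because the constant $16\delta$ is independent of $f$.
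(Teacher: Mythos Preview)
Your proof is correct and matches the paper's approach: the paper does not give an explicit proof of this proposition, presenting it simply as an immediate consequence of Proposition~\ref{cotasdmd} (``This gives us another lower bound for the joint stable length''), and the implicit argument is precisely the one you wrote---apply $d(f)\le d^{\infty}(f)+16\delta$ to each $f\in\Sigma$, use the trivial inequality $d^{\infty}(f)\le \mfrak{D}(\Sigma)$, and take the supremum.
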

\begin{obs}A result similar to Proposition \ref{cotasdmd} is false if we do not assume $X$ to be geodesic. Indeed, consider $X$ $\delta$-hyperbolic and $f\in \Isom(X)$ with a fixed point and such that $\sup_{x\in X}{|fx-x|}=\infty$. So, for all $R>0$ the set $X_{R}={\left\{{x\in X \hspace{1mm}:\hspace{1mm} |fx-x|\geq R}\right\}}$ is a $\delta$-hyperbolic space and $f$ restricts to an isometry $f_{R}$ of $X_{R}$. This is satisfied for example by every non-identity elliptic M\"obius transformation in $\mbb{H}^2$. But $d^{\infty}(f_{R})=d^{\infty}(f)=0$ and $d(f_{R})\geq R$. \end{obs}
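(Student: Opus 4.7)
The plan is to verify the three claims packaged into the remark: that every $X_R$ inherits $\delta$-hyperbolicity, that $f$ restricts to an isometry $f_R$ of $X_R$, and that the elliptic M\"obius example realizes the hypothesis $\sup_{x\in X}|fx-x|=\infty$ with a fixed point. Together these produce, for every $R>0$, a $\delta$-hyperbolic (non-geodesic) space on which $f_R$ has $d^{\infty}(f_R)=0$ yet $d(f_R)\geq R$, showing that the bound in Proposition \ref{cotasdmd} cannot survive dropping the geodesicity hypothesis.

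The inheritance of the four point condition \eqref{fpc} is immediate, since it is quantified only over quadruples of points, so it descends to any subset with the same constant $\delta$. For invariance of $X_R$ under $f$, given $x\in X_R$ the isometry property on $X$ gives $|f(fx)-fx|=|fx-x|\geq R$, and the symmetric argument applied to $f^{-1}$ yields $f^{-1}(X_R)\subset X_R$. Hence $f_R:=f|_{X_R}$ is a well-defined isometry of $X_R$. Writing $p\in X$ for a fixed point of $f$, basepoint-independence of the stable length gives $d^{\infty}(f)=\lim_n |f^n p-p|/n=0$; since for any $x\in X_R$ the sequence $|f^n x-x|$ agrees whether computed in $X$ or in $X_R$, the same basepoint-independence gives $d^{\infty}(f_R)=0$. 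The estimate $d(f_R)\geq R$ is then immediate from the very definition of $X_R$.

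The only substantive ingredient is producing a concrete $f$ fitting the hypotheses, and I would use an elliptic M\"obius transformation of $\mbb{H}^2$. Such an $f$ is conjugate in $\Isom(\mbb{H}^2)$ to a rotation about some point $p\in\mbb{H}^2$ by an angle $\theta\in(0,2\pi)$, and a standard computation in the Poincar\'e disk model yields the displacement formula $\sinh(|fx-x|/2)=|\sin(\theta/2)|\sinh(|x-p|)$. The right-hand side tends to infinity as $|x-p|\to\infty$, so $\sup_x|fx-x|=\infty$ while $p$ is fixed by $f$, and the hypothesis is verified. This trigonometric identity is the only point in the argument that goes beyond elementary inheritance, and it is routine; the real content of the remark is the observation that intersecting with a sublevel set of the displacement function preserves \eqref{fpc} but destroys geodesicity, which is what ultimately decouples $d(\,\cdot\,)$ from $d^{\infty}(\,\cdot\,)$.
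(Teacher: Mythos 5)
Your proposal is correct and takes essentially the same route as the paper: the remark is its own proof, and you have merely filled in the routine verifications (that \eqref{fpc} descends to subsets, that $|f(fx)-fx|=|fx-x|$ together with the same argument for $f^{-1}$ gives $f(X_R)=X_R$, that basepoint-independence of the stable length transfers $d^{\infty}(f)=0$ to $d^{\infty}(f_R)=0$, and the standard rotation displacement formula for the elliptic example). The only slip is cosmetic: $X_R$ is a \emph{superlevel} set of the displacement function, not a sublevel set as you write in your closing paragraph.
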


This is one of the reasons, together with Proposition \ref{hyppos}, we work with a generalization of the stable length instead of the minimal length (elliptic or parabolic isometries can satisfy $d(f)>0$).

This fact occurs because the bound given for the stable length given by Proposition \ref{cotasdmd} depends on a \emph{convexity} condition. This result is true because $\delta$-hyperbolic geodesic metric spaces are $8\delta$-convex \cite[Chpt. 10, Sec. 5]{papa}. In fact, if $X$ is a convex space, for all $f \in \Isom (X)$, $d^{\infty}(f)=d(f)$ \cite [Sec. 2.6]{groma} (this condition is satisfied for example for all complete CAT(0) spaces).

We finish this section showing that the generalizations of the minimum displacement and the stable distance in general may be different. It is the case of $\mbb{H}^{2}$:
\begin{prop}\label{ultimojeje}
There exists $\Sigma \subset \Isom(\mbb{H}^2)$ such that
\begin{equation}
\mfrak{D}(\Sigma)<\inf_{z \in \mbb{H}^2}{|\Sigma|_{z}},\notag
\end{equation}
where $|.|$ denotes the distance in $\mbb{H}^2$.
\end{prop}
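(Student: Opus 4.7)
The plan is to exhibit an explicit finite set $\Sigma\subset\Isom(\mbb{H}^{2})$ realizing the strict inequality. Via Proposition \ref{proposicioncuatro} and Corollary \ref{corolariodosuno}, setting $\tilde{\Sigma}\subset\textup{SL}^{\pm}_{2}(\mbb{R})$ to be the corresponding matrix set, one has $\mfrak{D}(\Sigma)=2\log\mfrak{R}(\tilde{\Sigma})$, whereas $\inf_{z}|\Sigma|_{z}=2\log\bigl(\inf_{S}\sup_{A\in\tilde{\Sigma}}\|SAS^{-1}\|_{2}\bigr)$, the infimum over $S\in\textup{SL}^{\pm}_{2}(\mbb{R})$. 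The proposition therefore amounts to showing that the joint spectral radius need not be attained by any \emph{Euclidean} operator norm---a well-known gap between Rota--Strang's characterization of $\mfrak{R}$ through \emph{all} operator norms and the subclass coming from an inner product.

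I would take $\Sigma_{\rho}=\{f,g\}$ to be two rotations by angle $\pi/2$ around distinct points $p,q\in\mbb{H}^{2}$ at hyperbolic distance $\rho>0$. Using the displacement formula $\sinh(|Rz-z|/2)=\sin(\theta/2)\sinh(d(z,p))$ for a rotation $R$ of angle $\theta$ around $p$, together with the isometric involution of $\mbb{H}^{2}$ swapping $p$ and $q$ (under which $|\Sigma_{\rho}|_{z}$ is invariant), the triangle inequality argues that $\inf_{z}|\Sigma_{\rho}|_{z}$ is attained at the midpoint $z_{0}$ of $\overline{pq}$, giving the closed-form value $\inf_{z}|\Sigma_{\rho}|_{z}=2\sinh^{-1}\!\bigl(\tfrac{\sqrt{2}}{2}\sinh(\rho/2)\bigr)$, which behaves like $\tfrac{\sqrt{2}}{2}\rho$ as $\rho\to 0$.

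To bound $\mfrak{D}(\Sigma_{\rho})$ I would pass to a Euclidean limit. In a normal coordinate chart at $z_{0}$, after rescaling lengths by $1/\rho$, the pair $(f,g)$ converges to a pair $(f^{E},g^{E})$ of Euclidean rotations by $\pi/2$ around two points at Euclidean distance $1$. For this Euclidean pair every length-$n$ word has linear part $R_{n\pi/2}^{E}$, which equals the identity exactly when $4\mid n$; in that case its translation part decomposes as $\sum_{k=1}^{n}\epsilon_{k}R^{E}_{(k-1)\pi/2}(1,-1)$ with $\epsilon_{k}\in\{0,1\}$, is bounded in Euclidean norm by $n/2$ (by projecting onto any unit direction and counting favorable indices), and achieves this bound on e.g.\ $f^{2}g^{2}$, which is a translation of length $2$. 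Hence $\mfrak{D}(\Sigma^{E})=1/2$. Since the matrix representatives of $f,g$ in $\textup{SL}^{\pm}_{2}(\mbb{R})$ depend analytically on $\rho$, combining with Theorem \ref{contjsd} on the matrix side yields $\mfrak{D}(\Sigma_{\rho})/\rho\to 1/2$ as $\rho\to 0$. Since $1/2<\sqrt{2}/2$, the strict inequality $\mfrak{D}(\Sigma_{\rho})<\inf_{z}|\Sigma_{\rho}|_{z}$ holds for all sufficiently small $\rho>0$.

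The main obstacle is rigorously setting up the Euclidean limit, since Theorem \ref{contjsd} is formulated on a fixed ambient space. One option is to carry out the continuity argument on the $\textup{SL}^{\pm}_{2}(\mbb{R})$ side via Proposition \ref{proposicioncuatro} and invoke the classical continuity of the joint spectral radius as a function of the generators. Alternatively one can avoid the limit by verifying the inequality at a single concrete value of $\rho$: Berger--Wang (Theorem \ref{bergerwang}) reduces $\mfrak{D}(\Sigma_{\rho})$ to $\sup_{n}d^{\infty}(\Sigma_{\rho}^{2n})/(2n)$, and it then suffices to bound finitely many normalized stable lengths of short words and check that the resulting supremum lies below the closed-form value for $\inf_{z}|\Sigma_{\rho}|_{z}$.
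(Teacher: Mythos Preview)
The paper's proof is quite different from yours: it takes $\Sigma$ to be the pair of \emph{hyperbolic} isometries coming from the finiteness-conjecture counterexample of Theorem~\ref{counterexample}, shows that the displacement function $z\mapsto|\Sigma|_z$ is proper (so that if equality held the infimum would be attained at some $w$), and then observes that the norm $\|A\|=\|SAS^{-1}\|_2$ with $\tilde S w=i$ would be extremal for the matrix pair, contradicting the Lagarias--Wang criterion \cite[Thm.~5.1]{lw}. In particular the paper never needs to compute or bound $\mfrak{D}(\Sigma)$ directly.

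Your construction with two order-$4$ rotations is attractive, the identification of $\inf_z|\Sigma_\rho|_z$ with the midpoint value is correct, and the Euclidean computation $\mfrak{D}(\Sigma^{E})=1/2$ is fine. The gap is at the crucial step: you need an \emph{upper} bound for $\mfrak{D}(\Sigma_\rho)$, and neither proposed fix provides one. Continuity of the joint spectral radius on the matrix side only yields $\mfrak{R}(\mcal{A}_\rho)\to 1$ as $\rho\to 0$ (both generators tend to the single rotation $R(\pi/4)$); it gives no control over the first-order quantity $\mfrak{D}(\Sigma_\rho)/\rho=2\rho^{-1}\log\mfrak{R}(\mcal{A}_\rho)$, and the Euclidean limit does not sit inside $\textup{SL}^{\pm}_2(\mbb{R})$ at all, so Theorem~\ref{contjsd} (or its matrix analogue) is not applicable to that passage. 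Your second suggestion misreads Theorem~\ref{bergerwang}: $\mfrak{D}(\Sigma)$ is a \emph{limit} of $d^{\infty}(\Sigma^{2n})/(2n)$, not a supremum; since $d^{\infty}(\Sigma^{m})/m\le\mfrak{D}(\Sigma)$ for every $m$, bounding finitely many normalized stable lengths produces only \emph{lower} bounds on $\mfrak{D}$. An upper bound has to come from $|\Sigma_\rho^{\,n}|_x/n$ for some concrete $n,x$, or equivalently from exhibiting an operator norm with $\sup_A\|A\|$ strictly below the Euclidean infimum---and that is precisely the estimate your sketch is missing. Without it the argument does not establish the strict inequality for any value of $\rho$.
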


\begin{proof}
Let $\mcal{A}=\left\{{F_0,F_1}\right\}$ be a counterexample to the finiteness conjecture given by Theorem \ref{counterexample}.
We will prove that $\Sigma=\widetilde{\mcal{A}}$ satisfies our requirements.

Let $f_{i}=\tilde{F_{i}}$ for $i \in \left\{{0,1}\right\}$. By the construction made in Subsection \ref{finconj}, it is a straightforward computation to see that $f_0$ and $f_1$ are hyperbolic isometries and that they have disjoint fixed point sets in $\partial \mbb{H}^{2}=\mbb{R}\cup \left\{{\infty}\right\}$. Hence, by properties of hyperbolic geometry, given $K \geq 0$ the set $C_{i}(K)=\left\{{z \in \mbb{H}^{2} \colon |f_{i}z-z|\leq K }\right\}$ is within bounded distance from the axis of $f_i$. We conclude that $C_{0}(K)\cap C_{1}(K)$ is compact and the map $z \to |\Sigma|_{z}=\max{(|f_{0}z-z|,|f_1{z}-z|)}$ is proper.

Now suppose that $\mfrak{D}(\Sigma)= \inf_{z \in \mbb{H}^2}{|\Sigma|_{z}}$ and let $(z_n)_n$ be a sequence in $\mbb{H}^{2}$ such that $|\Sigma|_{z_n} \to \mfrak{D}(\Sigma)$. By the properness property the sequence $(z_n)_n$ must be bounded and by compactness we can suppose that it converges to $w\in \mbb{H}^{2}$. So by continuity we have $\mfrak{D}(\Sigma)=|\Sigma|_{w}$. But then the set $\mcal{A}$ would have as extremal norm $\|A\|=\|SAS^{-1}\|_{2}$ where $S\in \textup{SL}^{\pm}_2(\mbb{R})$ satisfies $\tilde{S}w=i$, and by \cite[Thm. 5.1]{lw}, $\mcal{A}$ would satisfy the finiteness property, a contradiction.
\end{proof}

\section{Continuity}\label{seccioncinco}

\subsection{Continuity of the stable length}
Now we study the continuity properties of the stable and joint stable lengths. Throughout the section we assume that $\Isom(X)$ has the \emph{finite-open topology}. It is generated by the subbasic open sets $\mcal{G}(x,U)=\left\{{f \in \Isom(X) \hspace{1mm}:\hspace{1mm} f(x) \in U}\right\}$ where $x \in X$ and $U$ is open in $X$, and makes $\Isom(X)$ a topological group \cite[Prop. 5.1.3]{geodyn}. The finite-open topology is also called the \emph{pointwise convergence  topology} for the following property \cite[Prop. 2.6.5]{engel}:
\begin{prop}\label{contiti}
A net $(f_{\alpha})_{\alpha \in A} \subset \Isom(X)$ converges to $f$ if and only if $(f_{\alpha}x)_{\alpha \in A}$ converges to $fx$ for all $x \in X$.
\end{prop}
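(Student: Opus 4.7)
The plan is to unpack what convergence means in the finite-open topology and compare it term-by-term with pointwise convergence, exploiting the fact that the subbasic sets $\mathcal{G}(x,U)$ are indexed precisely by pairs (point, open neighborhood). Because this is a characterization of net convergence in a topology defined by an explicit subbase, the argument is essentially formal; I would split it into the two implications as follows.

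For the forward direction, suppose $(f_\alpha)_{\alpha \in A}$ converges to $f$ in $\Isom(X)$. Fix any $x \in X$ and any open $U \subset X$ containing $fx$. Then $f \in \mathcal{G}(x,U)$, which is a (subbasic, hence) open neighborhood of $f$, so by definition of net convergence there exists $\alpha_0 \in A$ with $f_\alpha \in \mathcal{G}(x,U)$ for all $\alpha \geq \alpha_0$. This says precisely $f_\alpha x \in U$ eventually, i.e.\ $f_\alpha x \to fx$ in $X$. Since $x$ was arbitrary, we get pointwise convergence at every $x \in X$.

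For the converse, assume $f_\alpha x \to fx$ for every $x \in X$, and let $V$ be an arbitrary open neighborhood of $f$ in $\Isom(X)$. Since the sets $\mathcal{G}(x,U)$ generate the topology, we may choose a basic neighborhood $W = \bigcap_{i=1}^{n} \mathcal{G}(x_i, U_i) \subseteq V$ containing $f$, where $x_1,\ldots,x_n \in X$ and each $U_i$ is open in $X$ with $fx_i \in U_i$. Pointwise convergence at each $x_i$ yields an index $\alpha_i \in A$ such that $f_\alpha x_i \in U_i$ for all $\alpha \geq \alpha_i$. Using the directedness of $A$, pick $\alpha_0 \geq \alpha_i$ for every $i=1,\ldots,n$; then for all $\alpha \geq \alpha_0$ we have $f_\alpha \in \bigcap_{i=1}^n \mathcal{G}(x_i,U_i) = W \subseteq V$. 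Thus $f_\alpha \to f$ in the finite-open topology.

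There is no real obstacle here: the only subtlety is that we are working with nets rather than sequences, so the finite collection of indices $\alpha_1,\ldots,\alpha_n$ produced in the converse step must be consolidated into a single $\alpha_0$, and this is exactly what the directedness of the index set $A$ supplies. No use of the isometry property, of $\delta$-hyperbolicity, or of the metric structure of $X$ enters the argument — it is a statement purely about the point-open topology on the function space $X^X$ restricted to $\Isom(X)$.
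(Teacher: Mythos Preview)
Your proof is correct and is precisely the standard argument identifying the point-open (finite-open) topology with the topology of pointwise convergence. The paper does not actually supply its own proof of this proposition but simply cites Engelking's \emph{General Topology} \cite[Prop.~2.6.5]{engel}; your argument is essentially what one finds there, so there is nothing to contrast.
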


\begin{coro}\label{corocont}
For all $n \in \mbb{Z}$ and $x\in X$ the function from $\Isom(X)$ to $\mbb{R}$ that maps $f$ to $|f^{n}x-x|$ is continuous.
\end{coro}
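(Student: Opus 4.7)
The plan is to reduce this to continuity of multiplication, inversion, evaluation, and distance, each of which is already available. Concretely, I would factor the map
\[
f \;\longmapsto\; f^{n} \;\longmapsto\; f^{n}x \;\longmapsto\; |f^{n}x-x|
\]
as a composition of three maps, and check continuity of each factor separately.

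First I would invoke the fact cited in the paper that $\Isom(X)$ with the finite-open topology is a topological group. In particular, the multiplication map $\Isom(X)\times \Isom(X)\to \Isom(X)$ and the inversion map $f\mapsto f^{-1}$ are continuous. Composing multiplication with itself finitely many times shows that $f\mapsto f^{n}$ is continuous for every $n\geq 1$; the case $n=0$ is the constant map to the identity; and for $n<0$ one first applies inversion and then the positive case. So the first factor is continuous for every $n\in\mbb{Z}$.

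Next, the evaluation map $\mathrm{ev}_{x}\colon \Isom(X)\to X$, $g\mapsto gx$, is continuous: by Proposition \ref{contiti} a net $(g_{\alpha})$ converges to $g$ in $\Isom(X)$ iff $g_{\alpha}y\to gy$ for every $y\in X$, which applied at $y=x$ gives continuity of $\mathrm{ev}_{x}$ at each point. (Equivalently, this is immediate from the definition of the subbasic open sets $\mcal{G}(x,U)$.) Finally, the distance function $y\mapsto |y-x|$ is continuous on $X$ since $d$ is a metric.

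Composing these three continuous maps yields the continuity of $f\mapsto |f^{n}x-x|$. The only mildly delicate point is justifying continuity of $f\mapsto f^{n}$ for all $n\in\mbb{Z}$, which is precisely where one needs that $\Isom(X)$ is a topological \emph{group} (not merely a topological monoid) so that inversion can be used for negative exponents; once that is in hand there is no real obstacle and the corollary follows formally.
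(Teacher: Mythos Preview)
Your proof is correct and follows essentially the same approach as the paper: both use that $\Isom(X)$ is a topological group to get continuity of $f\mapsto f^{n}$, then invoke Proposition~\ref{contiti} for continuity of evaluation, and finish by composing with the continuous distance-to-$x$ map. Your write-up is simply a more explicit unpacking of the paper's one-line ``composition of continuous functions'' argument.
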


\begin{proof}
As $\Isom(X)$ is a topological group, by Proposition \ref{contiti} the function $f \mapsto f^{n}x$ is continuous for all $x \in X$ and $n\in \mbb{Z}$.  The conclusion follows by noting that the map $f \mapsto |f^{n}x-x|$ is a composition of continuous functions.
\end{proof}
With Corollary \ref{corocont} we can prove Theorem \ref{contsd}:

\begin{proof}[Proof of Theorem \ref{contsd}]  We follow an idea of Morris (see \cite{morrip}). By subadditivity, $d^{\infty}(f)$ is the infimum of continuous functions, hence is upper semi-continuous. For the lower semi-continuity, Theorem \ref{teoremauno} implies that for any $x \in X$:
\begin{equation}
d^{\infty}(f)=\displaystyle\sup_{n\geq 1}{\frac{|f^{2n}x-x|-|f^{n}x-x|-2\delta}{n}}.\notag
\end{equation}
So $d^{\infty}(f)$ is also the supremum of continuous functions.
\end{proof}

\subsection{Vietoris topology and continuity of the joint stable length}

For the continuity of the joint stable length we need to work in the correct space. A natural candidate is $\mcal{B}(\Isom(X))$, the space of non empty bounded sets of $\Isom(X)$. Also, let $\mcal{BF}(\Isom(X))$ be the set of closed and bounded subsets of $\Isom(X)$. First of all, by the following lemma it is sufficient to consider closed (and bounded) sets of isometries:

\begin{lema}
If $\Sigma \in \mcal{B}(\Isom(X))$ then:
\begin{itemize}
\item [i)]$\overline{\Sigma} \in \mcal{BF}(\Isom(X)).$
\item [ii)] $|\overline{(\Sigma^{n})}|_{x}=|(\overline{\Sigma})^{n}|_{x}=|\Sigma^{n}|_{x}$ for all $x\in X, n \geq 1$.
\item [iii)]$\mfrak{D}(\overline{\Sigma})=\mfrak{D}(\Sigma).$
\end{itemize}
\end{lema}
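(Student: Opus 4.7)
The plan is to handle the three parts in order, relying on two facts from the preceding material: the evaluation map $f \mapsto fx$ is continuous on $\Isom(X)$ with the finite-open topology (Proposition \ref{contiti}), and $\Isom(X)$ is a topological group, so $n$-fold composition is jointly continuous \cite[Prop. 5.1.3]{geodyn}.

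For part \emph{i}), I would actually prove the stronger equality $|\overline{\Sigma}|_{x}=|\Sigma|_{x}$, which forces boundedness immediately (closedness being automatic). Given $f \in \overline{\Sigma}$, pick a net $(f_{\alpha})_{\alpha\in A}$ in $\Sigma$ with $f_{\alpha} \to f$; by Proposition \ref{contiti} we have $f_{\alpha}x \to fx$, hence $|f_{\alpha}x-x| \to |fx-x|$. Since each $|f_{\alpha}x-x| \leq |\Sigma|_{x}$, passing to the limit yields $|fx-x| \leq |\Sigma|_{x}$, and taking supremum over $f \in \overline{\Sigma}$ gives $|\overline{\Sigma}|_{x} \leq |\Sigma|_{x}$; the reverse inequality is trivial from $\Sigma \subset \overline{\Sigma}$.

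For part \emph{ii}), the key is the chain of inclusions
\begin{equation}
\Sigma^{n} \subset (\overline{\Sigma})^{n} \subset \overline{\Sigma^{n}}. \notag
\end{equation}
The first inclusion is immediate. The second uses joint continuity of the multiplication map $\Isom(X)^{n} \to \Isom(X)$: if $f_{1},\dots,f_{n} \in \overline{\Sigma}$, choose nets in $\Sigma$ converging to each, and their product converges to $f_{1}\cdots f_{n}$. Taking $\sup_{f}|fx-x|$ over the three nested sets yields $|\Sigma^{n}|_{x} \leq |(\overline{\Sigma})^{n}|_{x} \leq |\overline{\Sigma^{n}}|_{x}$. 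Applying part \emph{i}) to the bounded set $\Sigma^{n}$ collapses the outermost term back to $|\Sigma^{n}|_{x}$, forcing equality throughout. Part \emph{iii}) is then obtained simply by dividing $|(\overline{\Sigma})^{n}|_{x}=|\Sigma^{n}|_{x}$ by $n$ and passing to the limit.

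The only delicate step is the inclusion $(\overline{\Sigma})^{n} \subset \overline{\Sigma^{n}}$ in part \emph{ii}), which depends on the joint continuity of composition rather than just separate continuity; this is where the topological group property of $\Isom(X)$ enters crucially. Once this is granted, the remainder of the argument is a straightforward bookkeeping exercise with suprema and limits.
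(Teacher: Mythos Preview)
Your proof is correct and follows essentially the same route as the paper's: the paper also establishes $|\overline{\Sigma}|_{x}=|\Sigma|_{x}$ via a net argument, then uses the topological-group property to get $(\overline{\Sigma})^{n}\subset\overline{\Sigma^{n}}$ and collapses the resulting chain of inequalities. The only cosmetic difference is packaging: the paper calls \emph{i}) ``trivial'' (treating it as mere closedness plus boundedness) and proves the equality $|\overline{\Sigma}|_{x}=|\Sigma|_{x}$ as the first step of \emph{ii}), whereas you fold that equality into \emph{i}) and then invoke it for $\Sigma^{n}$.
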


\begin{proof}
Assertion \emph{i}) is trivial and \emph{iii}) is immediate from \emph{ii}). For the latter, let $f \in \overline{\Sigma}$ and $f_{\alpha}$ a net in $\Sigma$ converging to $f$. As $|f_{\alpha}x-x|\leq|\Sigma|_{x}$ for all $\alpha$, then $|fx-x|\leq|\Sigma|_{x}$. So $|\overline{\Sigma}|_{x}\leq |\Sigma|_{x} \leq |\overline{\Sigma}|_{x}$ and
\begin{equation}
|\overline{\Sigma}|_{x} = |\Sigma|_{x}. \label{pontic}
\end{equation}
Now, let $g=f^{(1)}f^{(2)} \dots f^{(n)} \in \overline{\Sigma}^{n}$ with $f^{(i)} \in \overline{\Sigma}$. There exist nets $(f^{(i)}_{\alpha})_{\alpha \in A_{i}}$ such that $f^{(i)}_{\alpha}$ tends to $f^{(i)}$ for all $i$. But since $\Isom(X)$ is topological group,  $f_\alpha=f^{(1)}_{\alpha_{1}}f^{(2)}_{\alpha_{2}}\cdots f^{(n)}_{\alpha_{n}}$ (with $\alpha=(\alpha_1,\dots,\alpha_{n}) \in A_{1}\times \dots\times  A_{n}$) defines a net in $\Sigma^{n}$ that tends to $g$. We conclude that $(\overline{\Sigma})^{n} \subset \overline{(\Sigma^{n})}$ and by \eqref{pontic} we obtain
\begin{equation}
|\Sigma^{n}|_{x} \leq |(\overline{\Sigma})^{n}|_{x} \leq |\overline{(\Sigma^{n})}|_{x}=|\Sigma^{n}|_{x}.\notag
\end{equation}
The conclusion follows.
\end{proof}

Our next step is to define a topology on $\mcal{BF}(\Isom(X))$. We follow the construction given by E.\+Michael \cite{mic}. Let $\mcal{P}(\Isom(X))$ be the set of non empty subsets of $X$.
If $U_1,\dots,U_n$ are non empty open sets in $\Isom(X)$ let
\begin{equation}
\left<U_1,\dots,U_n\right>:=\left\{{E \in \mcal{P}(\Isom(X)) \colon E \subset \bigcup_{i}{U_{i}} \text{ and } E\cap U_{i} \neq \emptyset \text{ for all }i } \right\}. \notag
\end{equation}
The \emph{Vietoris topology} on $\mcal{P}(\Isom(X))$ is the one which has as base the collection of sets $\left<U_1,\dots,U_n\right>$. We say that a subset of $\mcal{P}(\Isom(X))$ with the induced topology also has the Vietoris topology.

With this in mind the space $\mcal{BF}(\Isom(X))$ satisfies one of our requirements:

\begin{prop}\label{contsdmuchos}
For all $x \in X$ the map $\Sigma \mapsto |\Sigma|_{x}$ is continuous on $\mcal{BF}(\Isom(X))$.
\end{prop}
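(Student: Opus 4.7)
The plan is to verify upper and lower semi-continuity of $\Sigma \mapsto |\Sigma|_{x}$ separately at an arbitrary $\Sigma_{0} \in \mcal{BF}(\Isom(X))$. The key input is that the evaluation map $\phi\colon \Isom(X) \to \mbb{R}$, $\phi(f) = |fx - x|$, is continuous by Corollary \ref{corocont}. Set $M = |\Sigma_{0}|_{x}$, which is finite because $\Sigma_{0}$ is bounded; I also use that $\Sigma_{0}$ is nonempty.

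Fix $\epsilon > 0$. For the upper estimate I would take $U_{1} = \phi^{-1}((-\infty, M + \epsilon))$; this is open in $\Isom(X)$ and contains $\Sigma_{0}$, since $\phi(f) \leq M < M + \epsilon$ for every $f \in \Sigma_{0}$. For the lower estimate I pick $f_{0} \in \Sigma_{0}$ with $\phi(f_{0}) > M - \epsilon$, which exists by the definition of supremum, and set $U_{2} = \phi^{-1}((M - \epsilon, M + \epsilon))$, an open subset of $U_{1}$ containing $f_{0}$. Then $\Sigma_{0}$ lies in the basic Vietoris neighborhood $\left<U_{1}, U_{2}\right>$: it is a nonempty subset of $U_{1} = U_{1} \cup U_{2}$ and meets $U_{2}$ via $f_{0}$.

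The conclusion would then follow by reading off both estimates from membership in $\left<U_{1}, U_{2}\right>$. Any $\Sigma$ in this neighborhood satisfies $\Sigma \subset U_{1}$, so $|\Sigma|_{x} \leq M + \epsilon$, and $\Sigma \cap U_{2} \neq \emptyset$ produces some $f \in \Sigma$ with $\phi(f) > M - \epsilon$, so $|\Sigma|_{x} > M - \epsilon$. Hence $\bigl| |\Sigma|_{x} - M \bigr| \leq \epsilon$ on a Vietoris-open neighborhood of $\Sigma_{0}$. I do not anticipate any genuine obstacle here: the argument is the standard reduction of continuity of a supremum functional to continuity of the integrand, and the only small manoeuvre is arranging $U_{2} \subset U_{1}$ so that the two conditions (containment in $U_{1}$ and non-empty intersection with $U_{2}$) can be packaged into a single basic open set of the form $\left<U_{1}, U_{2}\right>$.
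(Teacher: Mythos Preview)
Your argument is correct and is essentially an explicit unpacking of the paper's one-line proof, which invokes Michael's general result that the supremum of a continuous real-valued function is Vietoris-continuous (\cite[Prop.~4.7]{mic}); your choice of the basic neighborhood $\langle U_1,U_2\rangle$ with $U_2\subset U_1$ is exactly how that result is proved. Note also that you correctly invoke Corollary~\ref{corocont} for the continuity of $f\mapsto |fx-x|$, whereas the paper's citation of Theorem~\ref{contsd} (continuity of $d^\infty$) appears to be a slip, since it is $|fx-x|$ and not $d^\infty(f)$ that is being supremized.
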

\begin{proof}
It follows from Theorem \ref{contsd} and the fact that taking supremum preserves continuity on $\mcal{BF}(\Isom(X))$, see \cite[Prop. 4.7]{mic}.
\end{proof}

For the continuity of the composition map $(\Sigma,\Pi) \mapsto \Sigma \Pi$ we must impose further restrictions. So we work on $\mcal{C}(\Isom(X))$, the set of non empty compact subsets of $\Isom(X)$.
In this space all our claims are satisfied:

\begin{proof}[Proof of Theorem \ref{contjsd}]
 The idea of the proof of the continuity of the joint stable length is the same one that we used in the proof of Theorem \ref{contsd}. We claim that in $\mcal{C}(\Isom(X))$ the maps $\Sigma \mapsto |\Sigma|_{x}$ and $\Sigma \mapsto \Sigma^n$ are continuous for all $x\in X $ and $ n \in \mbb{Z}^{+}$. The first assertion is Proposition \ref{contsdmuchos} and the second one comes from a general result in topological groups. We prove it in Appendix \ref{appendix} (see Corollary \ref{contcom}). Similarly the continuity of the stable length follows as in the proof of Proposition \ref{contsdmuchos}.
\end{proof}

It follows from Theorem \ref{contjsd} that the joint stable length is continuous on the set of non empty finite subsets of $\Isom(X)$. This affirmation together with Proposition \ref{jsdfinito} allows us to conclude a semi-continuity result on $\mcal{BF}(\Isom(X))$:

\begin{teo}
The map $\mfrak{D}(.):\mcal{BF}(\Isom(X)) \rightarrow \mbb{R}$ is lower semi-continuous.
\end{teo}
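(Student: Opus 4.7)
The plan is to prove lower semi-continuity by showing directly that the superlevel set $\{\Sigma \in \mcal{BF}(\Isom(X)) : \mfrak{D}(\Sigma) > c\}$ is Vietoris-open for every $c \in \mbb{R}$. Fix such a $\Sigma$. By Proposition \ref{jsdfinito} applied to $\Sigma$, there exists a finite nonempty subset $B = \{f_1, \ldots, f_k\} \subset \Sigma$ with $\mfrak{D}(B) > c$, reducing the task to building a Vietoris neighborhood of $\Sigma$ in which every set contains a finite subset whose joint stable length still exceeds $c$.

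Next, I invoke the continuity statement from Theorem \ref{contjsd} together with the auxiliary observation that the map $\phi : \Isom(X)^k \to \mcal{C}(\Isom(X))$ defined by $\phi(g_1,\ldots,g_k) = \{g_1,\ldots,g_k\}$ is continuous when the target carries the Vietoris topology; this is immediate from the definition of the subbasic sets $\langle U_1,\ldots,U_n\rangle$. The composition $\mfrak{D}\circ\phi : \Isom(X)^k \to \mbb{R}$ is therefore continuous and takes the value $\mfrak{D}(B) > c$ at $(f_1,\ldots,f_k)$, so there exist open neighborhoods $U_i \ni f_i$ in $\Isom(X)$ such that $\mfrak{D}(\{g_1,\ldots,g_k\}) > c$ whenever $g_i \in U_i$ for each $i$.

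Now I assemble the Vietoris basic open set
\begin{equation}
\mcal{V} := \langle \Isom(X), U_1, \ldots, U_k \rangle \cap \mcal{BF}(\Isom(X)). \notag
\end{equation}
Since $f_i \in \Sigma \cap U_i$ for each $i$ and $\Sigma \subset \Isom(X)$, I have $\Sigma \in \mcal{V}$. For any $\Sigma' \in \mcal{V}$ I can select $g_i \in \Sigma' \cap U_i$ for each $i$; then $\{g_1,\ldots,g_k\}$ is a finite nonempty subset of the bounded set $\Sigma'$, so Proposition \ref{jsdfinito} yields $\mfrak{D}(\Sigma') \geq \mfrak{D}(\{g_1,\ldots,g_k\}) > c$, which proves that $\mcal{V}$ is contained in the superlevel set.

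No serious obstacle is anticipated: the argument is a two-step approximation in which Proposition \ref{jsdfinito} reduces to the finite case and Theorem \ref{contjsd} handles it. The only delicate point is checking that the Vietoris notion of closeness between $\Sigma$ and $\Sigma'$ correctly transfers perturbations of the individual generators $f_i$, which is the role played by the subbasic condition $\Sigma' \cap U_i \neq \emptyset$ in the definition of $\mcal{V}$; note that one cannot in general hope for upper semi-continuity, since $\Sigma'$ may properly contain many isometries with larger stable lengths even when it is Vietoris-close to $\Sigma$.
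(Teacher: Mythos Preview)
Your proof is correct and follows essentially the same approach as the paper: reduce to a finite subset via Proposition \ref{jsdfinito}, invoke the continuity of $\mfrak{D}$ on $\mcal{C}(\Isom(X))$ from Theorem \ref{contjsd} to obtain open sets $U_i$, and then pass to the Vietoris neighborhood $\langle \Isom(X), U_1,\dots,U_k\rangle$. The only cosmetic differences are your use of superlevel sets rather than an $\epsilon$-argument and your factoring through the auxiliary map $\phi:\Isom(X)^k\to\mcal{C}(\Isom(X))$ to produce the $U_i$, whereas the paper extracts them directly from a Vietoris basic neighborhood of $B$.
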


\begin{proof}
Let $\epsilon >0$ and $\Sigma \in \mcal{BF}(\Isom(X))$. By Proposition \ref{jsdfinito} there is $B\subset \Sigma $ finite with $\mfrak{D}(\Sigma)- \mfrak{D}(B)<\epsilon/2$.
As $B \in \mcal{C}(\Isom(X))$, by Theorem \ref{contjsd} there exist open sets $U_1,\dots,U_n \subset \Isom(X)$ such that $V= \left<U_1,\dots, U_n\right>$ is an open neighborhood of $B$ and if $F$ is finite and $F \in V$ then $|\mfrak{D}(B)- \mfrak{D}(F)|<\epsilon/2.$

Let $W= \left<\Isom(X),U_1,\dots, U_n\right>$. Clearly $W$ is an open neighborhood of $\Sigma$, and if $A \in W$, then there exist $f_1,\dots,f_n$ with $f_i \in A \cap U_i$ for all $i$. So $C=\left\{{f_1,\dots,f_n}\right\} \in V$ and then $|\mfrak{D}(B)-\mfrak{D}(C)|<\epsilon/2.$
We have\begin{equation}
\mfrak{D}(\Sigma)<\mfrak{D}(B)+\epsilon/2<\mfrak{D}(C)+\epsilon \leq \mfrak{D}(A)+\epsilon\notag
\end{equation}
and the conclusion follows.
\end{proof}

\section{Questions}\label{seccionseis}

In this section we pose some questions related to the results we have obtained.
\subsection{Lower bound for the j.s.l. in geodesic spaces}
Is it true that for a $\delta$-hyperbolic geodesic space $X$ there exists a real constant $C=C(X)$ such that for all bounded $\Sigma \subset \Isom(X)$ we have\footnote{After this paper was written, Breuillard and Fujiwara \cite{breu} gave an affirmative answer to this question.}
\begin{equation}
\inf_{x\in X}{|\Sigma|_{x}} \leq \mfrak{D}(\Sigma)+C ? \notag
\end{equation}
This result would be a better generalization of Proposition \ref{cotasdmd} than Proposition \ref{cotamindist}.

Using Lemma \ref{lemadeljairo} and the equality $\mfrak{R}(\mcal{M})=\inf_{\|.\| \text{norm}}{\sup{\left\{{\|A\| : A \in \mcal{M} }\right\}}}$ valid for every bounded $\mcal{M} \subset M_{2}(\mbb{R})$ (see \cite[Prop. 1]{rost}), it is easy to see that $\mbb{H}^2$ satisfies this condition with $C=2\log{C_0}$.
By the discussion from Subsection \ref{subseccioncuatrodos} one may expect it to be true if $X$ is a $\delta$-convex space.

\subsection{Continuity on $\mcal{BF}(\Isom(X))$}
If $X$ is hyperbolic but not proper, is the joint stable length continuous on $\mcal{BF}(\Isom(X))$?
A natural candidate to test continuity is the infinite dimensional hyperbolic space $\mbb{H}^{\infty}$ (see \cite[Part 1, Chpt. 2]{geodyn}).

\subsection{Related inequalities on other kinds of spaces}
What happens when we relax the curvature conditions? Do modified versions of inequality \eqref{cor3} hold? Motivated by the matrix inequality \eqref{teojai}, the following inequality seems a natural candidate:
\begin{equation}
|\Sigma^d|_{x}\leq (d-1)|\Sigma|_{x}+\mfrak{D}(\Sigma)+C,\label{newhyp}
\end{equation}
where the constants $C$ and $d$ depend only on $X$ but not on the point $x$ and the bounded set $\Sigma$.

For the purpose of applications as those obtained in this paper, such an inequality would be sufficient.

Let us see that for Euclidean spaces, all such inequalities fail:

\begin{prop}\label{propnorn}
If $n \geq 2$, $\mbb{R}^n$ does not satisfy inequality \eqref{newhyp} for any $d\geq 2$ and $C>0$.
\end{prop}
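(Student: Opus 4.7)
The strategy is to reduce to a single elliptic isometry whose iterates travel far from a base point without accumulating stable length. Concretely, I would take $\Sigma = \{f\}$ where $f$ is a rotation of $\mathbb{R}^n$ by some angle $2\phi$ around the origin, acting in a 2-dimensional coordinate subspace (which is available since $n \geq 2$). Choose the base point $x$ at distance $R$ from the origin inside this plane of rotation. Since $f$ is elliptic, $d^{\infty}(f)=0$ and hence $\mathfrak{D}(\Sigma)=0$, so the inequality \eqref{newhyp} reduces to
\[
|f^{d}x-x| \leq (d-1)|fx-x|+C.
\]

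Using the elementary formula $|f^{k}x-x|=2R\sin(k\phi)$ (valid whenever $k\phi\in(0,\pi)$), this becomes
\[
2R\bigl(\sin(d\phi)-(d-1)\sin\phi\bigr) \leq C,
\]
so the problem reduces to a one-variable question: exhibit some $\phi>0$ for which $\sin(d\phi)-(d-1)\sin\phi>0$. Differentiating at $\phi=0$ gives $d\cos(0)-(d-1)\cos(0)=1>0$, so the quantity is strictly positive on some interval $(0,\phi_{0})$ with $d\phi_0<\pi$. Fixing any $\phi$ in this interval, the left-hand side of the displayed inequality grows linearly in $R$ while the right-hand side is constant, so choosing
\[
R > \frac{C}{2\bigl(\sin(d\phi)-(d-1)\sin\phi\bigr)}
\]
violates \eqref{newhyp}. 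Since $d\ge 2$ and $C>0$ were arbitrary, the proposition follows.

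There is no real obstacle here; the argument is essentially a calculus observation. The conceptual content is that in $\mathbb{R}^{n}$ an elliptic isometry with a well-chosen small rotation angle displaces a point at distance $R$ from its fixed set by a quantity scaling with $R$, and this displacement cannot be controlled by the stable length (which is zero) nor by any sublinear-in-$d$ multiple of $|fx-x|$, breaking every inequality of the form \eqref{newhyp} simultaneously.
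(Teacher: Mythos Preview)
Your proof is correct and follows essentially the same idea as the paper's: both arguments use a single rotation in a $2$-plane, whose stable length vanishes, and exploit the near-additivity of displacements for small angles. The only cosmetic difference is in the parameterization: the paper fixes the first displacement $|fx-x|=|a|>C$ and sends the rotation angle to~$0$ (taking a limit $u\to 1$ in $f_{u,a}(z)=uz+a$), whereas you fix a small angle $\phi$ and send the radius $R\to\infty$; these are equivalent rescalings of the same picture.
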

\begin{proof}
First consider $n=2$, that is $X=\mbb{C}$. Suppose that for some $d$ and $C$ inequality \eqref{newhyp} holds. Fix $x=0$ and consider the isometries $f_{u,a}(z)=uz+a$, where $|u|$=1 and $a\in \mbb{C}$ is fixed such that $|a|>C$. Then for $u \neq 1$, by \eqref{newhyp} we have
\begin{equation}
|u^{d-1}a+u^{d}a+\dots+ua| \leq (d-1)|ua|+C.\notag
\end{equation}
Taking the limit when $u \to 1$ we obtain $d|a|\leq (d-1)|a|+C$, contradicting the choice of $a$ and concluding the proof in this case.
In the case of $\mbb{R}^{n}$ with $n>2$, the same example multiplied by the identity works.
\end{proof}

In particular, Proposition \ref{propnorn} shows that \eqref{newhyp} fails for CAT(0) spaces, at least without further hypothesis. We ask if there are natural classes of metric spaces for which inequality \eqref{newhyp} holds.

\oneappendix

\section{Vietoris topology over topological groups}\label{appendix}

This appendix is dedicated to the topological results that we used in Section \ref{seccioncinco}. Assume that $X$ is a Hausdorff topological space and let $\mcal{P}(X)$ be the set of non empty subsets of $X$ endowed with the Vietoris topology defined in Section \ref{seccioncinco}. Also let $\mcal{C}(X)$ be the set of non empty compact subsets of $X$.

The following theorem is a criterion for convergence of nets in $\mcal{P}(X)$ when the limit is compact. We need some notation: If $A,B$ are directed sets, the notation $B \prec_{h} A$ means that $h:B \rightarrow A$ is a function satisfying the following  condition: for all $\alpha \in A$ there is some $\beta \in B$ such that $\gamma \geq \beta$ implies $h(\gamma) \geq \alpha$. We say that a net $(x_{h(\beta)})_{\beta \in B}$ is a \emph{subnet} of the net  $(x_\alpha)_{\alpha \in A}$ if $B \prec_{h} A$.
For our purposes the criterion is as follows:

\begin{teo}\label{equivalencia}
A net $(\Sigma_{\alpha})_{\alpha \in A} \subset \mcal{P}(X)$ converges to $\Sigma \in \mcal{C}(X)$ if and only if both conditions below hold:
\begin{itemize}
\item[i)] For every $f \in \Sigma$  and every $U$ open containing $f$ there exists $\alpha \in A$ such that $\beta \geq \alpha$ implies $\Sigma_{\beta}\cap U \neq \emptyset$.
\item[ii)] Every net $(f_{h(\beta)})_{\beta \in B}$ with $B \prec_{h} A$ and $f_{h(\beta)} \in \Sigma_{h(\beta)}$ has a convergent subnet $(f_{h\circ k(\gamma)})_{\gamma \in C}$ with $C \prec_{k} B$ and with limit in $\Sigma$.
\end{itemize}
\end{teo}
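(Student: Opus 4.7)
The plan is to unpack Vietoris convergence into its basic-open form: $\Sigma_\alpha \to \Sigma$ means that for every basic neighborhood $\langle U_1, \dots, U_n \rangle$ of $\Sigma$, eventually $\Sigma_\alpha \subset \bigcup_i U_i$ and $\Sigma_\alpha \cap U_i \neq \emptyset$ for each $i$. Both directions will then follow by applying this to suitably chosen basic opens, with compactness of $\Sigma$ entering the argument at a single point.

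For the forward implication, condition (i) is immediate by applying convergence to the basic open $\langle X, U\rangle$ for any open $U$ meeting $\Sigma$. For (ii), the key observation is that whenever $V$ is an open set containing $\Sigma$, $\langle V\rangle$ is a basic neighborhood of $\Sigma$, so eventually $\Sigma_\alpha \subset V$; combining with $B \prec_h A$, any choice $f_{h(\beta)} \in \Sigma_{h(\beta)}$ is eventually in $V$. I would then show that $(f_{h(\beta)})_{\beta \in B}$ admits a cluster point in $\Sigma$, which produces the required convergent subnet. If no such cluster point existed, each $f \in \Sigma$ would have an open neighborhood $U_f$ and an index $\beta_f \in B$ with $f_{h(\beta)} \notin U_f$ for all $\beta \geq \beta_f$; compactness of $\Sigma$ produces a finite subcover $U_{f_1}, \dots, U_{f_k}$, whose union $V$ is open, contains $\Sigma$, and by taking an upper bound of $\beta_{f_1}, \dots, \beta_{f_k}$ in the directed set $B$ is eventually avoided by the net, contradicting the previous sentence.

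For the reverse implication, assuming (i) and (ii), suppose $\Sigma_\alpha \not\to \Sigma$. Some basic neighborhood $\langle U_1, \dots, U_n \rangle$ of $\Sigma$ is then missed cofinally, and by pigeonhole either (A) some fixed $U_i$ satisfies $\Sigma_\alpha \cap U_i = \emptyset$ cofinally, or (B) $\Sigma_\alpha \not\subset V := \bigcup_j U_j$ cofinally. Case (A) contradicts (i) applied to any $f \in \Sigma \cap U_i$ with the open set $U_i$. In case (B), the set $B := \{\alpha \in A : \Sigma_\alpha \not\subset V\}$ is cofinal in $A$ and hence directed under the induced order, the inclusion $h : B \hookrightarrow A$ satisfies $B \prec_h A$, and choosing $f_{h(\beta)} \in \Sigma_{h(\beta)} \setminus V$ defines a net to which (ii) applies. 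Any resulting subnet converges to a point of $\Sigma$, which lies in the open set $V$, so the subnet is eventually in $V$, contradicting that every term lies outside $V$.

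The main obstacle I expect is the subnet bookkeeping in the sense of $B \prec_h A$: in the forward direction I need to convert a standard cluster-point statement into a convergent subnet of exactly the form condition (ii) demands, and in the reverse direction I need to verify that the natural choice of $B \subset A$ together with its induced order is directed and that the inclusion satisfies $B \prec_h A$. Compactness of $\Sigma$ is used in essence only once, for the finite-subcover step in the forward direction; the reverse implication uses only that $\Sigma$ is nonempty.
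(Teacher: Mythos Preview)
Your proposal is correct and follows essentially the same approach as the paper: both directions are handled by unpacking Vietoris convergence in terms of basic opens, with compactness of $\Sigma$ used exactly once (to find a cluster point in the forward direction). The only cosmetic differences are that the paper organizes the reverse implication by treating the containment and intersection conditions separately rather than via your pigeonhole split, and in the forward direction it phrases the cluster-point argument via the tail-closure sets $F(\beta)=\overline{\{f_{h(\gamma)}:\gamma\geq\beta\}}$ and then builds the required subnet explicitly over $\mathcal{N}\times B$; your plan to invoke the standard ``cluster point $\Rightarrow$ convergent subnet'' fact is equivalent but you should indeed carry out that bookkeeping to match the $B\prec_h A$ subnet convention used here.
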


This result is perhaps known, but in the lack of an exact reference we provide a proof (compare with \cite[Chpt. I.5, Lem. 5.32]{bri}).

\begin{proof}
We first prove the \scalebox{-1}[1]{"}if" part:

Let $\left<U_1,\dots,U_n\right>$ a basic open containing $\Sigma$.
We must show that for some $\alpha \in A$, if $\beta \geq \alpha$ then $\Sigma_{\beta} \subset \bigcup_{1\leq i \leq n}{U_{i}}$ and $\Sigma_{\beta}\cap U_{i} \neq \emptyset$ for all $i$.

Suppose that our first claim is false. Then for all $\alpha \in A$ there exists $h(\alpha) \geq \alpha$ such that $\Sigma_{h(\alpha)} \not\subset  \bigcup_{1\leq i \leq n}{U_{i}}$. That is, for all $\alpha$ there exists $f_{h(\alpha)} \in \Sigma_{h(\alpha)}$ such that $f_{h(\alpha)} \notin U_{i}$ for all $i$. Hence $(f_{h(\alpha)})_{\alpha \in A}$ is a net with $A \prec_{h} A$ and since we are assuming \emph{ii}), it has a convergent subnet $(f_{h(k(\gamma))})_{\gamma \in C} $ with limit $f \in \Sigma$ and $C \prec_{k}A$. But $f \in U_{j}$ for some $j$ and there is $\gamma \in C$ with $f_{h(k(\gamma))} \in U_{j}$, contradicting the definition of $h(k(\gamma))$. So there exists $\alpha_{0}$ such that $\beta \geq \alpha_{0}$ implies $\Sigma_{\beta} \subset \bigcup_{1\leq i \leq n}{U_{i}}$.

Now, fix $j \in\left\{{1,\dots,n} \right\}$  and suppose that for all $\alpha$, $\Sigma_{\beta(\alpha)} \cap U_{j}=\emptyset$ for some $\beta(\alpha)\geq \alpha$. Noting that $\left<U_{j},X\right>$ also contains $\Sigma $, there exists $f\in \Sigma \cap U_{j}$, and by \emph{i}) there is some $\alpha$ such that $\Sigma_{\beta}\cap U_{j}\neq \emptyset$ for $\beta \geq \alpha$, contradicting the existence of $\Sigma_{\beta(\alpha)}$. So for all $j$, there is some $\alpha_{j}$ such that $\Sigma_{\beta } \cap U_{j} \neq \emptyset$ for $\beta \geq \alpha_{j}$ and hence any $\alpha \geq \alpha_{j}$ for $0\leq j \leq n$ satisfies our requirements.

For the converse, suppose that $\Sigma_{\alpha}$ tends to $\Sigma$. Let $f\in \Sigma$ and $U$ be an open neighborhood of $f$. The set $\left< U, X \right>$ is open and contains $\Sigma$. So there exists some $\alpha$ such that for all $\beta \geq \alpha$, $\Sigma_{\beta} \in \left< U, X \right>$ and hence $\Sigma_{\beta}\cap U \neq \emptyset$.

Finally, let $(f_{h(\beta)})_{\beta \in B}$ be a net with $B\prec_{h}A$ and $f_{h(\beta)} \in \Sigma_{h(\beta)}$. We claim that this net has a subnet converging to an element of $\Sigma$. For $\beta \in B$ consider the set $E(\beta)=\left\{{f_{h(\gamma)}: \gamma \in B \text{ and } \gamma \geq \beta  } \right\} \subset X$ and let $F(\beta)=\overline{E(\beta)}$.

If $\bigcap_{\beta \in B}{F(\beta)} \cap \Sigma = \emptyset$, the collection $\left\{{X \backslash F(\beta)}\right\}_{\beta \in B}$ is an open cover of $\Sigma$ and by compactness it has a minimal finite subcover $\left\{{X \backslash F(\beta_{i})}\right\}_{1 \leq i\leq m}$. This implies that $\Sigma \in \left<X \backslash F(\beta_{i})\right>_{1\leq i \leq m}$ and by our convergence assumption, for some $\alpha_0 \in A$ it happens that $\Sigma_{\alpha} \subset \cup_{1 \leq i\leq m}{X \backslash F(\beta_{i})}$ when $\alpha \geq \alpha_0$. But $B\prec_{h}A$, so if we take $\beta_0 \in B$ such that $h(\beta_0) \geq \alpha_0$ and $\beta'$ greater than $\beta_{i}$ for all $0\leq i\leq m$, then $f_{h(\beta')} \notin F(\beta_{i})$ for some $i$. This contradicts that $f_{h(\beta')}\in E(\beta_{i})\subset F(\beta_{i})$. So there exists some $f \in \bigcap_{\beta \in B}{F(\beta)} \cap \Sigma$.

Then for every open neighborhood $U$ of $f$ and every $\beta \in B$, there exists some $k(U,\beta) \geq \beta$ such that $f_{h(k(U,\beta))} \in U \cap E(\beta)$. Let $\mcal{N}$ be the set of open neighborhoods of $f$ partially ordered by reverse inclusion. In this way $\mcal{N}\times B$ with the product order becomes a directed set. Now consider the map $k:\mcal{N}\times B\rightarrow B$ and let $\beta \in B$. For some  $U_0 \in \mcal{N}$ , every $(V,\gamma) \in \mcal{N}\times B$ with $(V,\gamma) \geq (U_0,\beta)$ satisfies $k(V,\gamma) \geq \gamma \geq  \beta$. So $\mcal{N}\times B \prec_{k} B$ and $(f_{h \circ k(\lambda)})_{\lambda \in \mcal{N}\times B}$ is a subnet of $(f_{h(\beta}))_{\beta \in B}$. To finish the proof we must verify that $f$ is the limit to this subnet. So, let $U \in \mcal{N}$. For $(U,\beta) \in  \mcal{N}\times B$ we have that  $(V,\gamma) \geq (U_0,\beta)$ implies $f_{h \circ k(V,\gamma)} \in V \cap E(\gamma) \subset U$. So $f$ is our desired limit and our claim is proved.
\end{proof}

As application to Theorem \ref{equivalencia} let $G$ be a Hausdorff topological group with identity $e$. If $o:G\times G \rightarrow G$ is the composition map and $\Sigma , \Pi \in \mcal{C}(G)$ then $\Sigma \Pi=o(\Sigma \times \Pi) \in \mcal{C}(G)$, so it induces a composition map $\pi: \mcal{C}(G)\times \mcal{C}(G) \rightarrow \mcal{C}(G)$. We establish that this map is continuous.

\begin{teo}
The composition map $\pi: \mcal{C}(G)\times \mcal{C}(G) \rightarrow \mcal{C}(G)$ given by $\pi(\Sigma, \Pi)=\Sigma \Pi$ is continuous.
\end{teo}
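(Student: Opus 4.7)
The plan is to verify the net-convergence criterion of Theorem \ref{equivalencia}. Given a net $(\Sigma_\alpha,\Pi_\alpha)_{\alpha\in A}$ in $\mcal{C}(G)\times\mcal{C}(G)$ converging to $(\Sigma,\Pi)$, I want to show that $\Sigma_\alpha\Pi_\alpha$ converges to $\Sigma\Pi$ in $\mcal{C}(G)$. Note that $\Sigma\Pi$ is compact, being the image of the compact set $\Sigma\times\Pi$ under the continuous multiplication $o\colon G\times G\to G$, so Theorem \ref{equivalencia} is applicable to the candidate limit.

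For condition \emph{i}), fix $h\in\Sigma\Pi$ and an open neighborhood $U$ of $h$. Write $h=fg$ with $f\in\Sigma$, $g\in\Pi$, and use continuity of $o$ at $(f,g)$ to choose open neighborhoods $V\ni f$ and $W\ni g$ with $VW\subset U$. Applying condition \emph{i}) of Theorem \ref{equivalencia} separately to $\Sigma_\alpha\to\Sigma$ (at $f\in V$) and to $\Pi_\alpha\to\Pi$ (at $g\in W$) and taking a common upper bound $\alpha_0\in A$, one gets $\Sigma_\beta\cap V\neq\emptyset$ and $\Pi_\beta\cap W\neq\emptyset$ whenever $\beta\geq\alpha_0$; multiplying representatives then yields $\Sigma_\beta\Pi_\beta\cap U\neq\emptyset$.

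For condition \emph{ii}), let $B\prec_h A$ and pick $h_{h(\beta)}\in\Sigma_{h(\beta)}\Pi_{h(\beta)}$, writing $h_{h(\beta)}=f_{h(\beta)}\,g_{h(\beta)}$ with $f_{h(\beta)}\in\Sigma_{h(\beta)}$ and $g_{h(\beta)}\in\Pi_{h(\beta)}$. Applying condition \emph{ii}) of Theorem \ref{equivalencia} to the net $(f_{h(\beta)})_{\beta\in B}$ produces $C\prec_k B$ and $f\in\Sigma$ with $f_{h\circ k(\gamma)}\to f$. Then, viewing $C$ as directed over $A$ via $h\circ k$ (so that $C\prec_{h\circ k}A$), apply condition \emph{ii}) once more, this time to $(g_{h\circ k(\gamma)})_{\gamma\in C}$ using $\Pi_\alpha\to\Pi$, obtaining $D\prec_l C$ and $g\in\Pi$ with $g_{h\circ k\circ l(\delta)}\to g$. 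Passing to this further subnet preserves $f_{h\circ k\circ l(\delta)}\to f$, and continuity of $o$ finally gives $h_{h\circ k\circ l(\delta)}=f_{h\circ k\circ l(\delta)}\,g_{h\circ k\circ l(\delta)}\to fg\in\Sigma\Pi$, verifying \emph{ii}).

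The main obstacle is purely notational: one must keep careful track of the composed index maps and check that each successive subnet inherits the $\prec$-relation over the original directed set $A$, so that condition \emph{ii}) of Theorem \ref{equivalencia} can legitimately be invoked twice in succession on the two coordinates. The genuine content of the argument is only the continuity of $o$, used once to produce the neighborhoods with $VW\subset U$ and once to pass from coordinatewise convergence of $(f,g)$ to convergence of the product $fg$.
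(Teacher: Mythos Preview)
Your proposal is correct and follows essentially the same approach as the paper's own proof: both verify the two conditions of Theorem \ref{equivalencia} by splitting elements of $\Sigma\Pi$ into factors, and for condition \emph{ii}) both pass to a subnet twice in succession, first for the $\Sigma$-factor and then for the $\Pi$-factor. The only cosmetic difference is in condition \emph{i}): you invoke continuity of the multiplication $o$ directly to obtain neighborhoods $V\ni f$, $W\ni g$ with $VW\subset U$, whereas the paper translates to the identity and chooses a single $V$ with $e\in V\subset V^{2}\subset f^{-1}Ug^{-1}$, but these are equivalent formulations of the same step.
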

\begin{proof}
Let $(\Sigma_{\alpha}, \Pi_{\alpha})_{\alpha \in A}$ be a net that converges to $(\Sigma,\Pi)$. We claim that $(\Sigma_{\alpha} \Pi_{\alpha})_{\alpha \in A}$ tends to $\Sigma \Pi$. For that, we use the equivalence given by Theorem \ref{equivalencia}.
Let $f \in \Sigma$, $g \in \Pi$ and $U$ be an open neighborhood of $fg$. So $f^{-1}Ug^{-1}$ is an open neighborhood of $e$ and hence there exists $V$ open with $e \in V \subset V^2 \subset f^{-1}Ug^{-1}$.
Then we have $f \in fV$ and $g \in Vg$.

So there exists $\alpha_1$ and $\alpha_2$ such that $\beta \geq \alpha_1$ implies $\Sigma_{\beta} \cap fV \neq \emptyset$ and  $\beta \geq \alpha_2$ implies $\Sigma_{\beta} \cap Vg \neq \emptyset$.  If we take $\alpha_0$ greater than $\alpha_1$ and $\alpha_2$, for $\beta \geq \alpha_0$ there exists $f_\beta \in \Sigma_{\beta}$ and $g_{\beta} \in \Sigma_{\beta}$ such that $f_{\beta} \in fV$ and $g_{\beta} \in Vg$.
We conclude that for all $\beta \geq \alpha_0,\hspace{2mm} f_{\beta}g_{\beta}\in
fV^{2}g \subset U$, hence $\Sigma_{\beta}\Pi_{\beta} \cap U \neq \emptyset$ for all $\beta \geq \alpha_0$.

Now, let $B \prec_{h} A$ be such that $(f_{h(\beta)}g_{h(\beta)})_{\beta \in B}$ is a net with with $f_h(\beta) \in \Sigma_{h(\beta)}$ and $g_{h(\beta)} \in \Pi_{h(\beta)}$. We must exhibit a subnet converging to an element in $\Sigma \Pi$. But it is easy. Since $\Sigma_{h(\beta)} \to \Sigma$, there exists $C \prec_{k} B$ such that $(f_{h\circ k(\gamma)})_{\gamma \in C}$ is a net that tends to $f \in \Sigma$. Also, as $C \prec_{h \circ k} A$ there exists $D \prec_{l} C$ with $(g_{h\circ k\circ l(\lambda)})_{\lambda \in D}$ a net that converges to $g \in \Pi$. Then $(f_{h \circ k \circ l(\lambda)}g_{h\circ k \circ l(\lambda)})_{\lambda \in D}$ tends to $fg \in \Sigma \Pi$.
Our proof is complete.
\end{proof}

\begin{coro}\label{contcom}
The map $\Sigma \mapsto \Sigma^n$ is continuous in $\mcal{C}(G)$ for all $n \in \mbb{Z}^+$.
\end{coro}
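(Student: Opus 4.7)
The plan is to deduce this corollary from the preceding theorem (continuity of the composition map $\pi:\mcal{C}(G)\times\mcal{C}(G)\to\mcal{C}(G)$) by a straightforward induction on $n$. First I would dispose of the base case $n=1$, where $\Sigma\mapsto\Sigma^1=\Sigma$ is the identity map on $\mcal{C}(G)$ and hence trivially continuous.

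For the inductive step, assume the map $\Phi_{n-1}:\Sigma\mapsto\Sigma^{n-1}$ is continuous on $\mcal{C}(G)$. I would first note the set-theoretic identity $\Sigma^n=\Sigma\cdot\Sigma^{n-1}$, which is immediate from the definition of $\Sigma^n$ as the set of all products of $n$ elements of $\Sigma$. Hence $\Phi_n$ factors as
\begin{equation}
\Sigma\;\longmapsto\;(\Sigma,\Phi_{n-1}(\Sigma))\;\longmapsto\;\pi(\Sigma,\Phi_{n-1}(\Sigma))=\Sigma\cdot\Sigma^{n-1}=\Sigma^n.\notag
\end{equation}
The first arrow is the composition of the diagonal map $\Delta:\mcal{C}(G)\to\mcal{C}(G)\times\mcal{C}(G)$, $\Sigma\mapsto(\Sigma,\Sigma)$, with $\mathrm{id}\times\Phi_{n-1}$; the diagonal is always continuous, and $\mathrm{id}\times\Phi_{n-1}$ is continuous by the inductive hypothesis. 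The second arrow is continuous by the preceding theorem. Thus $\Phi_n$ is a composition of continuous maps, completing the induction.

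There is essentially no obstacle beyond bookkeeping: the only non-trivial input, continuity of $\pi$, has already been established, and the identity $\Sigma^n=\Sigma\cdot\Sigma^{n-1}$ requires nothing more than associativity in $G$. One should also verify that $\Sigma^n$ indeed lies in $\mcal{C}(G)$ so that $\Phi_n$ has the claimed codomain; this follows because $\Sigma^n=\pi(\Sigma,\Sigma^{n-1})$ and $\pi$ takes values in $\mcal{C}(G)$.
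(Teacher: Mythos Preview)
Your proposal is correct and is exactly the argument the paper has in mind: the corollary is stated without proof immediately after the theorem on continuity of $\pi$, and induction on $n$ using $\Sigma^n=\pi(\Sigma,\Sigma^{n-1})$ is the intended (and essentially only) way to read it.
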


\paragraph{Acknowledgment}
I am grateful to my advisor J.\+Bochi for very valuable discussions and corrections throughout all this work. I also thank to I.\+D.\+Morris for communicating us the counterexample given in Theorem \ref{counterexample}, and helping in the proofs of Theorem \ref{contsd} and Proposition \ref{ultimojeje}. This article was supported by CONICYT Scholarship 22172003, and partially supported by FONDECYT project 1140202 and by CONICYT PIA ACT172001.

\small{Eduardo Oreg\'on-Reyes (\texttt{ecoregon@mat.uc.cl})}\\
\small{Facultad de Matem\'aticas}\\
\small{Pontificia Universidad Cat\'olica de Chile}\\
\small{Av. Vicu$\tilde{\textnormal{n}}$a Mackenna 4860 Santiago Chile}


\begin{thebibliography}{40}
\footnotesize{\bibitem{bewa}%
M.\+A.\+ Berger and Y.\+ Wang, Bounded semigroups of matrices. \textit{Linear Algebra Appl.}~\textbf{166} (1992), 21–-27. \Zbl{0818.15006} \MR{1152485}

\bibitem{blo}%
V.\+ D.\+ Blondel, J.\+ Theys and A.\+A.\+Vladimirov, An elementary counterexample to the finiteness conjecture. \textit{SIAM J. Matrix Anal. Appl.}~\textbf{24} (2003), no. 4, 963-–970. \Zbl{1043.15007} \MR{2003315}

\bibitem{boci}%
J.\+ Bochi, Inequalities for numerical invariants of sets of matrices. \textit{Linear Algebra Appl.}~\textbf{368} (2003), 71--81. \Zbl{1031.15023} \MR{1983195}

\bibitem{bosc}%
M.\+ Bonk and O.\+ Schramm, Embeddings of Gromov hyperbolic spaces. \textit{Geom. Funct. Anal.}~\textbf{10} (2000), no. 2, 266-–306. \Zbl{0972.53021} \MR{1771428}

\bibitem{bousch}%
T.\+ Bousch and J.\+ Mairesse, Asymptotic height optimization for topical IFS, Tetris heaps, and the finiteness conjecture. \textit{J. Amer. Math. Soc.}~\textbf{15} (2002), no. 1, 77--111. \Zbl{1057.49007} \MR{1862798}

\bibitem{breu}%
E.\+ Breuillard and K.\+ Fujiwara, On the joint spectral radius for isometries of non-positively curved spaces and uniform growth.\\
Preprint~2018. \arXiv{1804.00748} [math.GR]

\bibitem{bri}%
M.\+R.\+ Bridson and A.\+ Haefliger, \textit{Metric spaces of non-positive curvature.} †Grundlehren Math. Wiss. 319, Springer-Verlag, Berlin, 1999. \Zbl{0988.53001} \MR{1744486}

\bibitem{papa}%
M.\+Coornaert, T.\+Delzant and A.\+Papadopoulos,
\textit{G{\'e}om{\'e}trie et th{\'e}orie des groupes. Les groupes hyperboliques de Gromov.} Lecture Notes in Math. 1441, Springer-Verlag, Berlin, 1990. \Zbl{0727.20018} \MR{1075994}

\bibitem{geodyn}%
T.\+ Das, D.\+ Simmons and M.\+ Urba\'nski, \textit{Geometry and dynamics in Gromov hyperbolic metric spaces. With an emphasis on non-proper settings.} Math. Surveys Monogr. 218, Amer. Math. Soc., Providence, RI, 2017. \Zbl{06729361} \MR{3558533}

\bibitem{dau}%
I.\+ Daubechies and J.\+ C.\+ Lagarias, Sets of matrices all infinite products of which converge. \textit{Linear Algebra Appl.}~\textbf{161} (1992), 227--263. \Zbl{0746.15015} \MR{1142737}

\bibitem{engel}%
R.\+ Engelking, \textit{General topology.} Second edition. Sigma Series in Pure Math. 6, Heldermann Verlag, Berlin, 1989. \Zbl{0684.54001} \MR{1039321}

\bibitem{gersho}%
S.\+ M.\+ Gersten and H.\+ B.\+ Short, Rational subgroups of biautomatic groups. \textit{Ann. of Math.} (2)~\textbf{134} (1991), no. 1, 125–-158. \Zbl{0744.20035} \MR{1114609}

\bibitem{groma}%
M.\+ Gromov, Hyperbolic manifolds, groups and actions. In \textit{Riemann surfaces and related topics: Proc. 1978 Stony Brook Conf.} Ann. of Math. Stud. 97, Princeton Univ. Press, Princeton, N.J., 1981, 183--213. \Zbl{0467.53035} \MR{0624814}

\bibitem{grohi}%
M.\+Gromov, Hyperbolic groups. In \textit{Essays in group theory}, Math. Sci. Res. Inst. Publ. 8, Springer, New York, 1987, 75–-263. \Zbl{0634.20015} \MR{0919829}

\bibitem{ham}%
M.\+ Hamann, Group actions on metric spaces: fixed points and free subgroups. \textit{Abh. Math. Semin. Univ. Hambg.}~\textbf{87} (2017), no. 2, 245--263. \Zbl{1377.05199} \MR{3696149}

\bibitem{harpe}%
P.\+de\+la\+Harpe, \textit{Topics in geometric group theory.} Chicago Lectures in Math. Univ. Chicago Press, Chicago, IL, 2000. \Zbl{0965.20025} \MR{1786869}

\bibitem{jp}%
O.\+ Jenkinson and M.\+ Pollicott, Joint spectral radius, Sturmian measures, and the finiteness conjecture. \textit{Ergodic Theory Dynam. Systems}, to appear.

\bibitem{jung}%
R.\+ Jungers, \textit{The joint spectral radius. Theory and applications.} Lect. Notes Control Inf. Sci. 385, Springer-Verlag, Berlin, 2009. \MR{2507938}

\bibitem{kalinin}%
B.\+ Kalinin, Liv$\breve{\textnormal{s}}$ic theorem for matrix cocycles. \textit{Ann. of Math.} (2)~\textbf{173} (2011), no. 2, 1025--1042. \Zbl{1238.37008} \MR{2776369}

\bibitem{koyak}%
V.\+ Kozyakin, An annotated bibliography on convergence of matrix products and the theory of joint/generalized spectral radius. Preprint.

\bibitem{lw}%
J.\+ C.\+ Lagarias and Y.\+ Wang, The finiteness conjecture for the generalized spectral radius of a set of matrices. \textit{Linear Algebra Appl.}~\textbf{214} (1995), 17--42. \Zbl{0818.15007} \MR{1311628}

\bibitem{mic}%
E.\+ Michael, Topologies on spaces of subsets. \textit{Trans. Amer. Math. Soc.}~\textbf{71} (1951), 152--182. \Zbl{0043.37902} \MR{0042109}

\bibitem{morrip}%
I.\+D.\+ Morris, An inequality for the matrix pressure function and applications. \textit{Adv. Math.}~\textbf{302} (2016), 280–-308. \Zbl{1350.15005} \MR{3545931}

\bibitem{mather}%
I.\+D.\+ Morris, Mather sets for sequences of matrices and applications to the study of joint spectral radii. \textit{Proc. Lond. Math. Soc.} (3)~\textbf{107} (2013), no. 1, 121-–150. \Zbl{1277.15009} \MR{3083190}

\bibitem{nica}%
B.\+ Nica and J.\+ $\breve{\textnormal{S}}$pakula, Strong Hyperbolicity. \textit{Groups Geom. Dyn.}~\textbf{10} (2016), no. 3, 951--964. \Zbl{1368.20057} \MR{3551185}

\bibitem{rost}
G.\+ C.\+ Rota and G.\+ Strang, A note on the joint spectral radius. \textit{Indag. Math. }(N.S.)~\textbf{22} (1960), 379–-381. \Zbl{0095.09701} \MR{0147922}

\bibitem{vai}
J.\+ V\"ais\"al\"a, Gromov hyperbolic spaces. \textit{Expo. Math.}~\textbf{23} (2005), no. 3, 187–-231. \Zbl{1087.53039} \MR{2164775}
}
\end{thebibliography}
\end{document}